\documentclass[a4paper,11pt,twoside,reqno]{amsart}
\usepackage[utf8]{inputenc}
\usepackage[plainpages=false,pdfpagelabels=true]{hyperref}
\usepackage{amssymb,amsthm}
\usepackage[margin=1in]{geometry}
\usepackage{slashed}
\usepackage{graphicx}

\newtheorem{Satz}{Theorem}[section]
\newtheorem{Prop}[Satz]{Proposition}
\newtheorem{Lem}[Satz]{Lemma}
\newtheorem{Thm}[Satz]{Theorem}

\theoremstyle{definition}

\newtheorem{Bem}[Satz]{Remark}

\newtheorem{claim}{Claim}

\newcommand{\sff}{\mathrm{I\!I}}

\newcommand{\C}{{\mathbb{C}}}

\allowdisplaybreaks[1]

\renewcommand{\epsilon}{\varepsilon}

\newcommand{\R}{\ensuremath{\mathbb{R}}}

\newcommand{\ch}{\text{ch}}
\newcommand{\spec}{\text{spec}}

\newcommand{\lra}{\longrightarrow}

\newcommand{\rquot}[2]{\raisebox{0.5ex}{$#1$}\!/\!\raisebox{-0.5ex}{$#2$}}

\numberwithin{equation}{section}

\title{Dirac-harmonic maps from manifolds with boundary via index theory}

\date{\today}

\author{Volker Branding}
\address{University of Rostock, Institute of Mathematics\\
Ulmenstraße 69, 18057 Rostock, Germany}
\email{volker.branding@uni-rostock.de}

\author{Nicolas Ginoux}
\address{Universit\'{e} de Lorraine, CNRS, IECL, F-57000 Metz, France}
\email{nicolas.ginoux@univ-lorraine.fr}

\subjclass[2010]{58E20; 53C43}
\keywords{Dirac-harmonic map; manifold with boundary; index theory}

\usepackage{color}

\begin{document}

\begin{abstract}
Dirac-harmonic maps are a mathematical version of the supersymmetric non-linear sigma model
of quantum field theory. 
Since they arise as critical points of an unbounded energy functional, it is a challenging task to establish general existence results.
Within this manuscript we mainly prove a general existence result based on index theory for manifolds with boundary and apply it in various situations. 
Due to the presence of a boundary there are additional contributions in the Atiyah-Patodi-Singer index formula allowing for a non-vanishing index, which is the key argument in our proofs.
Further explicit examples based on twistor spinors on surfaces are also presented.
\end{abstract} 

\maketitle

\section{Introduction and results}\label{s:introandresults}
The action functionals of quantum field theory  
provide interesting challenges for mathematicians.
Often, they are closely connected to geometric variational problems with a long tradition in differential geometry and geometric analysis. A prototype of such a functional is the nonlinear sigma model of quantum field theory corresponding to the classic energy functional in mathematics.
Here, one considers two Riemannian manifolds \((M,g),(N,h)\) and a map \(f\colon M\to N\).
Then, one is interested in finding the critical points of 
\begin{align}
\label{eq:energy}
    E(f)=\frac{1}{2}\int_M|df|^2\,d\mu_g.
\end{align}
The Euler-Lagrange equation of the above energy is a second order non-linear equation which makes its analysis involved from a technical point of view. In the mathematics literature the critical points of the energy
are precisely \emph{harmonic maps}, for an overview on the latter we refer to the book
\cite{MR2044031}.\\

In modern theoretical physics there are various extensions of the nonlinear sigma model that have attracted considerable attention over the years.
These extensions usually involve additional variables (or fields in the terminology of physics).
One such extension is obtained by augmenting the energy of a map with a spinor field.
This leads to the notion of \emph{Dirac-harmonic maps} which have been introduced in \cite{MR2262709}
as a mathematical version of the supersymmetric non-linear sigma model of quantum field theory. In contrast to the model studied by physicists, the notion of Dirac-harmonic maps does not employ
anti-commuting spinor fields and this approach allows to use well-developed tools from the geometric calculus of variations in the analysis of action functional from physics.\\

In order to set up the energy functional for Dirac-harmonic maps we need to assume that the manifold \(M\) is spin ensuring the existence of the spinor bundle \(\Sigma M\). 
We may then consider the twisted bundle 
\(\Sigma M\otimes f^\ast TN\) and sections \(\phi\in\Gamma(\Sigma M\otimes f^\ast TN)\) are called
\emph{vector spinors} on which the twisted Dirac operator \(D^f\colon \Gamma(\Sigma M\otimes f^\ast TN)\circlearrowleft\)
acts. 
For more details on spin geometry we refer to the books \cite{MR3410545}, \cite{MR1476425}, \cite{MR2509837} and \cite{MR1031992}.

Using this setup the energy functional for Dirac-harmonic maps is given by 
\begin{align}
E(f,\phi)=\frac{1}{2}\int_M|df|^2+\Re\langle\phi,D^f\phi\rangle\,d\mu_g.
\end{align}
Due to the presence of the Dirac term the energy functional \(E(f,\phi)\) is unbounded in both directions, so that most techniques employed to prove general existence results for harmonic maps can no longer be applied, despite the existence of a few explicit solutions of the Dirac-harmonic-map system on particular closed manifolds \cite{zbMATH07075240,MR2569270}.\\

In \cite{MR3070562} Ammann and the second author used the following strategy to reach existence results for Dirac-harmonic maps from closed manifolds. 
Assuming the existence of a harmonic map they employed the index theorem to construct a vector spinor such that the resulting pair is a Dirac-harmonic map. 
The Dirac-harmonic maps obtained via this approach are uncoupled in the sense that all terms in the Euler-Lagrange equations vanish independently. Similar existence results for the Dirac-Yang-Mills system were obtained in \cite{adam}.\\

In the case that the domain manifold is an expanding spacetime a global existence result for the (hyperbolic) Dirac-wave map system was achieved in \cite{MR3830277}. \\

The aim of this manuscript is to first conduct a thorough investigation of the boundary value problem
for Dirac-harmonic maps and to analyze the suitable elliptic boundary conditions.
To this end we first recall a number of key results on the existence of harmonic maps from manifolds with boundary.
First, we want to mention the seminal contribution of
Hamilton \cite[Theorems pp. 6-7]{MR482822} who showed the existence of a harmonic map in every homotopy class of maps provided
that the target manifold has non-positive curvature extending the famous existence result of Eells and Sampson.
In addition, Lemaire \cite[Theorems  1.2 \& 1.4]{MR664104} and Chang \cite[Theorem 1.1]{MR1030856} established existence results
for harmonic maps from surfaces with boundary.\\

Before we describe our main results, we give a non-exhaustive list of references that are connected to the boundary value problem for Dirac-harmonic maps, most of which focus on analytic aspects of the problem.
\begin{enumerate}
    \item The first reference on the boundary value problem for Dirac-harmonic maps is provided by \cite{MR3085099} where, besides a first analysis of the appropriate boundary conditions for the problem, regularity results for the boundary value problem of Dirac-harmonic maps are established. The above results were later refined in \cite{MR3908762}.
\item In the physics literature the supersymmetric sigma model for a domain manifold with boundary
is investigated by \cite{MR1962116}, \cite{MR2022994}. Here, the boundary is interpreted as a D-brane
for the superstring whose equation of motion is governed by the action of the supersymmetric sigma model,
see also \cite[Section 2.4.4]{zbMATH05557123}.
\item An existence result via the heat flow for Dirac-harmonic maps from surfaces with boundary was established in \cite{MR3724759} and \cite{MR4402492}. 
It turns out that the heat flow for Dirac-harmonic maps is substantially easier to handle in the case of a manifold with boundary. In the closed case only short-time existence could be achieved up to now \cite{MR3719555}.
\item A survey on heat-flow approaches to the boundary value problem for Dirac-harmonic maps is provided by \cite{MR4287930}.
\item A uniqueness result for Dirac-harmonic maps from a compact surface with boundary
was established in \cite{MR4549002}.
\item Questions concerning the boundary regularity of Dirac-harmonic maps from manifolds with boundary have recently been studied in \cite{MR4868118}, \cite{MR5102184}.
\end{enumerate}

In a series of articles \cite{MR397797,MR397798,MR397799} Atiyah, Patodi and Singer developed the index theory for Dirac operators on manifolds with boundary, introducing the famous boundary condition named after them.
Their index formula, which is recalled below, will be of great importance within this manuscript.
Further boundary conditions for Dirac operators have been studied since, see e.g. \cite{zbMATH06748684} for the treatment of very general elliptic boundary conditions.\\

In the closed case, the Atiyah-Singer index theorem relates the index of some Dirac-type operator with the so-called $\alpha$-genus of the manifold.
As a consequence, a non-vanishing $\alpha$-genus yields the existence of a non-trivial harmonic (twisted) spinor. 
For a manifold with boundary equipped with Atiyah–Patodi–Singer boundary conditions, the index is instead given by the APS formula.
In contrast to the closed case, the index is not determined solely by the interior characteristic form: it also depends on the spectral invariants of the induced boundary Dirac operator. 
Consequently, a non-vanishing \(\alpha\)-genus alone does not imply the kernel of the APS Dirac operator to be non-trivial.\\

However, in several cases we can get existence results for Dirac-harmonic maps exploiting the presence of a nonempty boundary.
Some of these results do not have a counterpart in the case of a closed manifold which suggests that the boundary value problem for Dirac-harmonic maps might be more attractive to study than the case of a closed domain manifold.
For instance, Theorem \ref{t:existDhmapsgAPSasmall} establishes a general existence result based on the spectral flow, while Theorem \ref{t:exDhMgr>=2} exploits the boundary contribution in the Atiyah-Singer index formula to reach a very general existence result on surfaces.\\

This article is organized as follows. 
In Section \ref{s:prelim} we recall background material on both harmonic and Dirac-harmonic maps, elliptic boundary conditions and the Atiyah-Patodi-Singer index theorem for manifolds with non-empty boundary. 
Section \ref{s:variationalformulas} presents the variational formula required for our analysis which leads to a first existence result for Dirac-harmonic maps from manifolds with boundary, see Propositions \ref{p:existuncoupledDhfromvarformula} and \ref{p:examplesDhmapsZ2grading}.
Section \ref{s:firstexamplesviatwistorspinors} presents explicit solutions of the equations for Dirac-harmonic maps based on an explicit ansatz involving twistor spinors, see Propositions \ref{p:explicitexdim2chiral} and \ref{p:explicitexdim2MIT}.
Finally, in Section \ref{s:solfromindexthm}, we establish various existence results for Dirac-harmonic maps from manifolds with boundary based on the index theorem, see e.g. Theorems \ref{t:exDhMgr>=2} when $\dim(M)=2$ and \ref{t:exdim4} when $\dim(M)=4$.

\medskip
\textbf{Acknowledgements}: The first-named author benefited from ``trajectoires'' funds from the Institut Elie Cartan de Lorraine, which he would like to thank.
Part of this work was also carried out at University of Rostock, which the second-named author would like to thank for its hospitality.
The authors would also like to thank Victor Nistor for 
inspiring discussions, in particular the idea leading to Theorem \ref{t:existDhmapsgAPSasmall} below.

\section{Preliminary Results}\label{s:prelim}
In this section, we review several well-established results from the literature that will serve as important tools in proving our main results.
This includes existence results for harmonic maps from manifolds with boundary, a thorough review of elliptic boundary conditions for twisted Dirac operators and a brief introduction to the Atiyah-Singer index theorem for manifolds with boundary.

\subsection{Harmonic maps from manifolds with boundary}
First, we present a number of existence results for harmonic maps from Riemannian manifolds with non-empty boundary \(\partial M\). Recall that a map \(f\colon M\to N\) is called harmonic if it is a critical point of the energy \eqref{eq:energy} which is equivalent to the map having vanishing tension field 
\(\tau(f):=\mathrm{tr}_g(\nabla df)\).

We have the following existence result due to Hamilton \cite[Theorems pp. 6-7]{MR482822}:

\begin{Satz}\label{t:Hamilton}
Let \((M,g)\) be a compact Riemannian manifold with boundary and \((N,h)\) be a compact Riemannian manifold with non-positive curvature and with either empty or convex boundary. 
Consider a map \(f\colon M\to N\). 
\begin{enumerate}
    \item For any $u\in C^\infty(\partial M,N)$, the Dirichlet problem 
     \begin{align*}
         \tau(f)=&0 \text{ on M }, \\
         f=&u \text{ on } \partial M
     \end{align*}
     has a solution in every relative homotopy class i.e., every homotopy class of maps $f\colon M\to N$ with $f_{|_{\partial M}}=u$.
    \item The Neumann problem
    \begin{align*}
         \tau(f)=&0 \text{ on M }, \\
         df(\nu)=&0 \text{ on } \partial M
     \end{align*}
     has a solution in every homotopy class.
\end{enumerate}
\end{Satz}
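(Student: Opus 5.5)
The plan follows Hamilton's approach: run the harmonic map heat flow with the prescribed boundary condition and show that it converges. We start from a smooth representative $f_0$ of the given class. In case (1) we may assume $f_0|_{\partial M}=u$; in case (2) we first deform $f_0$ near $\partial M$ so that $df_0(\nu)=0$. We then solve
\begin{align*}
\partial_t f=\tau(f)\ \text{ on } M\times[0,T),\qquad f=u \ \text{ (resp. } df(\nu)=0) \text{ on } \partial M\times[0,T),\qquad f(\cdot,0)=f_0 .
\end{align*}
To treat $N$ as a subset of Euclidean space, we isometrically embed $N$ into a complete manifold $\tilde N$, or into $\R^K$ with a nearest-point projection. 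Short-time existence then follows from linear parabolic theory with the Dirichlet or Neumann condition and the inverse function theorem in parabolic H\"older spaces, using compatibility conditions at $t=0$. If $\partial N\neq\emptyset$, convexity of $\partial N$ together with a maximum principle applied to the distance to $\partial N$ keeps the flow inside $N$. The flow is a homotopy, relative to $\partial M$ in the Dirichlet case, so it stays in the prescribed class.

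For long-time existence, the energy $E(f(t))$ is nonincreasing, because $\frac{d}{dt}E=-\int_M|\tau(f)|^2$. Boundary terms vanish since $\partial_t f=0$ on $\partial M$ in case (1), and $df(\nu)=0$ in case (2). Non-positive curvature of $N$ gives the Bochner inequality $(\partial_t-\Delta)e(f)\le C\,e(f)$, with $C$ depending only on the Ricci curvature of $M$. It also gives $(\partial_t-\Delta)|\partial_t f|^2\le 0$, so $\sup|\partial_t f|$ is controlled by its initial and boundary values. The main obstacle is a uniform pointwise gradient bound up to the boundary, since the maximum principle cannot be applied directly at $\partial M$.
\begin{itemize}
\item In the Dirichlet case, $|\partial_t f|=0$ on the boundary. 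The tangential derivatives are fixed by $u$, and the normal derivative is controlled by barrier functions built from the distance to $\partial M$ and from convexity of $\partial N$ (or of distance functions in $N$).
\item In the Neumann case, I would compute $\partial_\nu e(f)$ on $\partial M$ and bound it by $C\,e(f)$ using the second fundamental form of $\partial M$. Multiplying $e(f)$ by a suitable weight then makes the maximum principle applicable.
\end{itemize}
With $e(f)$ and $\partial_t f$ bounded, Schauder estimates give uniform $C^{k,\alpha}$ bounds for all $k$, so the flow exists for all time.

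Finally, for convergence, the energy identity gives $\int_0^\infty\!\int_M|\tau(f)|^2<\infty$. So along a sequence $t_j\to\infty$ we have $\tau(f(t_j))\to0$ in $L^2$, and the uniform higher-order bounds yield a subsequence converging smoothly to a map $f_\infty$. This limit is harmonic, satisfies the boundary condition, and lies in the same (relative) homotopy class by uniform $C^0$ closeness. If needed, full convergence follows from the decay of $\int_M|\partial_t f|^2$, which is monotone by the non-positive curvature computation. This monotonicity turns the subsequential statement into genuine convergence.
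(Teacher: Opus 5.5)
The paper gives no proof of this statement; it quotes it from Hamilton, whose proof is the heat-flow scheme you outline, so your route is the right one. The gap is that the estimates feeding your convergence step must be independent of $t$, and your argument only produces them on finite time intervals.

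From $(\partial_t-\Delta)e(f)\le C\,e(f)$ with $C>0$, the maximum principle only gives $e(f)(x,t)\le C_1e^{Ct}$. Here $C>0$ is forced unless $\mathrm{Ric}^M\ge0$, and in your Neumann argument the weight adds further zero-order terms of indefinite sign. The bound $e^{Ct}$ is enough for long-time existence. It is not enough for your step ``the uniform higher-order bounds yield a subsequence converging smoothly'': without a time-independent $C^1$ bound there is no compactness of $\{f(t_j)\}$.

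The missing idea is the Eells--Sampson mechanism. Combine the monotone energy $\int_Me(f(t))\,d\mu_g\le E(f_0)$ with a parabolic mean-value (Moser) inequality for the subsolution $e(f)$ on cylinders of fixed size. This gives $\sup_Me(f(t))\le C\,E(f_0)$ for $t\ge1$. At $\partial M$ you need boundary versions:
\begin{enumerate}
\item in the Dirichlet case, apply it to $(e(f)-K)_+$, which vanishes on $\partial M$ once $K$ is a time-independent bound for $e(f)$ there;
\item in the Neumann case, apply it to the weighted density, whose inner normal derivative is nonnegative by your own computation, so the boundary terms in the Moser iteration have the favourable sign.
\end{enumerate}

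The same uniformity is missing in your Dirichlet boundary barrier. The convexity you can actually use is that of $y\mapsto d(y,y_0)$ on the universal cover of $N$. The barrier at $x_0\in\partial M$ needs a bound on $d(\tilde f(x,t),\tilde u(x_0))$ over a fixed half-ball around $x_0$ that does not grow with $t$. The bound $|\partial_tf|\le C$ only gives linear growth in $t$. Exponential decay closes this. Since $\partial_tf=0$ on $\partial M$, non-positive curvature gives
$\frac{d}{dt}\int_M|\partial_tf|^2\le-2\int_M|\nabla\partial_tf|^2\le-2\lambda_1\int_M|\partial_tf|^2$,
with $\lambda_1$ the first Dirichlet eigenvalue. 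The subsolution property of $|\partial_tf|^2$ then upgrades this to $\sup_M|\partial_tf|\le Ce^{-\lambda_1t}$. Hence the lifted flow stays at bounded distance from the lift of $f_0$.

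Finally, your closing claim is wrong. Monotonicity of $\int_M|\partial_tf|^2$ together with $\int_0^\infty\!\int_M|\partial_tf|^2<\infty$ only gives $\|\partial_tf\|_{L^2}^2=o(1/t)$. That rate is not integrable in $t$, so it does not imply convergence of $f(t)$. For the existence statement this is harmless: once the uniform bounds are in place, subsequential convergence already produces a harmonic map in the class.
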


In \cite[Theorems  1.2 \& 1.4]{MR664104} Lemaire established the following
existence result:
\begin{Satz}\label{t:Lemaire}
Let \((M,g)\) be a compact surface with boundary and \((N,h)\) a Riemannian manifold with \(\pi_2(N)=0\). 
\begin{enumerate}
\item If $N$ is compact, then any homotopy class of maps from \(M\) to \(N\)
contains a smooth solution of the Neumann problem
\begin{align*}
 \tau(f)=0,\qquad df(\nu)\big|_{\partial M}=0   
\end{align*}
minimizing \(E(f)\) in the class.
\item If $(N,h)$ is homogeneously regular, then for any smooth map $u\colon \partial M\to N$ and any relative homotopy class $[f]$ of maps $M\to N$, there exists a smooth harmonic map $f_0\colon M\to N$ with $f_0\in[f]$, in particular such that $f_0{}_{|_{\partial M}}=u$.
\end{enumerate}
\end{Satz}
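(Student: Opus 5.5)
The statement is Lemaire's theorem, which the paper quotes from the literature. I will therefore only sketch how its two parts are proved, and I treat the two parts separately.

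\emph{Part (1), Neumann problem.} Fix a homotopy class \([f]\) and take a minimizing sequence \(f_k\in[f]\) for \(E\) in \(W^{1,2}(M,N)\).
\begin{itemize}
\item[(a)] Since \(N\) is compact, we may embed \(N\subset\R^L\) isometrically. The sequence \(f_k\) is then bounded in \(W^{1,2}\).
\item[(b)] In dimension two, homotopy classes are not preserved under weak \(W^{1,2}\) convergence, so the direct method is replaced by a local replacement argument. Choose \(\epsilon_0\) smaller than the small-energy regularity threshold. Cover \(M\) by small interior discs and by boundary half-discs on which \(\int|df_k|^2<\epsilon_0\).
\item[(c)] On each such disc, replace \(f_k\) by the energy-minimizing map with the same trace. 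On a boundary half-disc, the Dirichlet condition is imposed only on the interior arc, and the boundary segment is left free, which yields the natural Neumann condition. Small-energy regularity (Morrey, and Schoen--Uhlenbeck up to a free boundary) makes the replacement smooth.
\item[(d)] For \(\epsilon_0\) small, the replacement stays homotopic to \(f_k\) on the disc relative to its trace. This holds because two small-energy maps with the same trace map into a small geodesic ball of \(N\) away from at most finitely many points.
\item[(e)] Iterating the replacement over the cover gives an equicontinuous minimizing sequence. By Arzel\`a--Ascoli we extract a \(C^0\) limit, which lies in \([f]\) and is harmonic with \(df(\nu)=0\) on \(\partial M\).
\end{itemize}

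\emph{Main obstacle.} The difficulty is energy concentration: bubbling at interior points, or at boundary points through half-spheres. This is exactly where \(\pi_2(N)=0\) enters.
\begin{itemize}
\item[(a)] Suppose energy concentrates at a point. Rescaling produces a nonconstant harmonic sphere, or after reflection across a boundary point, a harmonic disc with free boundary that doubles to a sphere.
\item[(b)] The plan is to show that a bubble would give a strictly energy-lower competitor in the same homotopy class. The competitor is obtained by cutting the bubble out and replacing it by a map that fills the small circle inside a ball.
\item[(c)] Since \(\pi_2(N)=0\), this surgery does not change the homotopy class. The energy of the neck can be made arbitrarily small by the Courant--Lebesgue lemma.
\item[(d)] This contradicts minimality. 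Hence there is no concentration, and the energy bounds in part (1) hold uniformly.
\end{itemize}

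\emph{Part (2), Dirichlet problem.} The argument is the same with \(f=u\) fixed on \(\partial M\).
\begin{itemize}
\item[(a)] Near the boundary, we use boundary regularity for the Dirichlet problem with smooth data, following Hildebrandt--Kaul--Widman.
\item[(b)] Noncompactness of \(N\) is handled by homogeneous regularity, which gives uniform local charts with bounded geometry. Combined with the energy bound and the Courant--Lebesgue lemma, this confines the minimizing sequence to a compact region of \(N\).
\item[(c)] The relative homotopy class is preserved for the same \(\pi_2(N)=0\) reason as in part (1).
\item[(d)] Smoothness of the limit follows from Morrey's regularity theorem for two-dimensional minimizers.
\end{itemize}
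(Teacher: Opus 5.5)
The paper does not prove this statement. It quotes it from Lemaire \cite{MR664104} and remarks that Chang's heat-flow theorem \cite{MR1030856} gives the same conclusions for compact $N$. So there is no argument in the paper to compare with. Your outline follows the classical variational route (Courant--Lebesgue lemma plus harmonic replacement) rather than Chang's parabolic one. Its overall structure is sound: minimizing sequence, local replacement, exclusion of concentration, limit. One step, however, rests on a false claim.

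\textbf{The gap is in step (d) of Part (1).} You justify that the replacement $h_k$ is homotopic to $f_k$ relative to the trace by saying that two small-energy maps with the same trace map into a small geodesic ball away from finitely many points. This is false.

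In dimension two the Dirichlet energy does not control oscillation, since $W^{1,2}\not\subset C^0$. For instance, $\delta\log\log(e/|x|)$ vanishes on the boundary of the unit disc, has energy $2\pi\delta^2$, and is unbounded; composing a truncation of it with a geodesic of $N$ gives a continuous map with constant trace, small energy and large image. So nothing confines $f_k(D)$ to a small ball, and the ``finitely many points'' clause has no basis.

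\textbf{What small energy does control.} It controls $h_k$, and only after the radius of $D$ has been chosen by Courant--Lebesgue so that the trace has small oscillation. You should build that choice into (b)--(c). Landing in a convex ball is also what gives uniqueness of $h_k$ and the convexity estimates needed for the iterated replacements in (e) to converge.

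\textbf{The correct reason for (d) is topological.} Glue $f_k|_D$ and $h_k$ along $\partial D$ to get a map $S^2\to N$. Since $\pi_2(N)=0$, this sphere is null-homotopic, which is exactly a homotopy $f_k|_D\simeq h_k$ rel $\partial D$. Relying on small energy alone would instead require a separate, nontrivial gap theorem for maps $S^2\to N$ of small energy.

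On the free-boundary half-discs of Part (1), no hypothesis is needed at all, because a half-disc deformation retracts onto its arc. In Part (2), by contrast, the boundary half-discs have fully prescribed boundary, so $\pi_2(N)=0$ is needed there too, as you say.

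\textbf{Consequence for your ``main obstacle'' paragraph.} Once $\pi_2(N)=0$ is used in this way, that paragraph is just the same replacement performed on a Courant--Lebesgue circle around a concentration point. The energy drops by roughly $\varepsilon_0$, contradicting minimality of the sequence. No rescaled harmonic sphere is needed, and rescaling a non-harmonic minimizing sequence would not produce one anyway.
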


Here, ``homogeneously regular'' means there exists an open covering of $N$ by relatively compact chart domains such that the metric coefficients in those charts are uniformly bounded below and above on $N$ by those of the Euclidean one.
This is a weaker assumption than compactness.

Actually, \cite[Theorem 1.1]{MR1030856} and the remark that follows imply the existence results from Theorem \ref{t:Lemaire} only assuming $N$ to be compact.

\subsection{Elliptic boundary conditions for the Dirac operator}\label{ss:ellipticbc}

We recall a few basics about elliptic boundary conditions for twisted Dirac operators.
We mainly refer to \cite{zbMATH06748684} and mention good former references such as the founding book \cite[Part III]{MR1233386} or more recent works such as \cite[Chap. 4]{MR2509837}, \cite{MR1946443}  or \cite[Sec. 4]{zbMATH05666397}.\\

Let $(M^n,g)$ be any compact Riemannian spin manifold with nonempty boundary $\partial M$ and $E\to M$ be any Riemannian or Hermitian vector bundle endowed with a metric connection $\nabla^E$ over $M$.
In the present article the bundle $E$ will mostly be the pullback bundle $f^*TN\to M$ for a given map $f\colon M\to N$ to some Riemannian manifold $N$.
We denote by $\Sigma M\to M$ the Hermitian spinor bundle of $TM\to M$, by $\nabla^{\Sigma M}$ the covariant derivative induced by the Levi-Civita connection on $TM$ and by "$\cdot$" the Clifford multiplication $T^*M\otimes\Sigma M\to\Sigma M$.
Let $D^E\colon\Gamma(\Sigma M\otimes E)\circlearrowleft$, $\phi\mapsto\sum_{j=1}^n(e_j\cdot\otimes\mathrm{id})\nabla_{e_j}^{\Sigma M\otimes E}\phi$, be the associated twisted Dirac operator.
It is an elliptic and formally selfadjoint first-order linear differential operator.
Denote by $\Sigma\to\partial M$ the Hermitian bundle defined by $\Sigma:=\Sigma\partial M$ when $n$ is odd and $\Sigma:=\Sigma \partial M\oplus\Sigma \partial M$ when $n$ is even.
Recall that $\partial M$ inherits a spin structure from that of $M$ via the unit normal vector field $\nu$ trivializing the normal bundle $T^\perp\partial M\to\partial M$.
The bundle $\Sigma\to\partial M$ actually coincides with the restricted spinor bundle $\Sigma M_{|_{\partial M}}$.
Then a boundary-adapted operator for $D^E$ in the sense of \cite[Lemma 2.2]{zbMATH06748684} is given by 
\[A_0:=\sum_{j=1}^{n-1}(e_j\cdot\nu\cdot\otimes\mathrm{id})\nabla_{e_j}^{\Sigma\otimes E}\colon\Gamma(\Sigma\otimes E)\circlearrowleft.\]
The operator $A_0$ can be identified with the intrinsic twisted Dirac operator $\displaystyle D_{\partial M}^E:=\sum_{j=1}^{n-1}(e_j\cdot_{\partial M}\otimes\mathrm{id})\nabla_{e_j}^{\Sigma\partial M\otimes E}$ on $\partial M$ when $n$ is odd and with $D_{\partial M}^E\oplus-D_{\partial M}^E$ when $n$ is even respectively.
As a consequence, the operator $D_{\partial M}^E$ being elliptic and self-adjoint, the spectrum of $A_0$ is discrete, real and always symmetric about $0$, therefore its $\eta$-invariant $\eta(A_0)$ vanishes.
Mind that we will be later interested in the $\eta$-invariant of $A=D_{\partial M}^E$, not of $A_0$.\\

As in \cite[Sec. 3.6]{zbMATH06748684}, a boundary condition for $D^E$ will be described in this article as a closed subspace $B$ of $\check{H}(A_0):=H_{(-\infty,a)}^{\frac{1}{2}}(A_0)\oplus H_{[a,\infty)}^{-\frac{1}{2}}(A_0)$ for some $a\in\R$, where $H_{(-\infty,a)}^{\frac{1}{2}}(A_0)$ is the closure of $\displaystyle\bigoplus_{\lambda_i<a}\ker(A_0-\lambda_i)$ in $H^{\frac{1}{2}}(\partial M,\Sigma\otimes E)$, the definition of $H_{[a,\infty)}^{-\frac{1}{2}}(A_0)$ being analogous.
Mind that the value of $a$ plays no role since all such $\check{H}(A_0)$ are equivalent to each other for different values of $a$.
The boundary condition $B$ will be called pseudo-local if it is of the form $B=P(H^{\frac{1}{2}}(\partial M,\Sigma\otimes E))\subset L^2(\partial M,\Sigma\otimes E)$, where $P\colon\Gamma(\partial M,\Sigma\otimes E)\circlearrowleft$ is a $0$-order linear pseudodifferential operator.
We look for $D^E$-elliptic pseudo-local boundary conditions, where ellipticity can be characterized by $\sigma_P(\xi)_x\colon(\Sigma\otimes E)_x\circlearrowleft$ restricting to an isomorphism $\displaystyle\bigoplus_{\lambda<0}\ker(i\sigma_{A_0}(\xi)_x-\lambda)\to\sigma_P(\xi)_x(\Sigma\otimes E)_x$ for all $\xi\in T_x^*\partial M\setminus\{0\}$ and $x\in\partial M$, see \cite[Theorem 3.15]{zbMATH06748684}.
Moreover, a boundary condition $B$ for $D^E$ will be called \emph{self-adjoint} if and only if $B=B^{\rm ad}$, where $B^{\rm ad}:=\left\{\varphi\in\check{H}(A_0)\,|\,\left(\sigma_{D^E}(\nu^\flat)\varphi,\psi\right)=0\;\forall\psi\in B\right\}$ is the adjoint boundary condition.
Because of $\sigma_{D^E}(\nu^\flat)=\nu\cdot$, which is the short notation for $\nu\cdot\otimes\mathrm{id}_E$, we can rewrite $B^{\rm ad}=(\nu\cdot B)^\perp\cap\check{H}(A_0)$, where the orthogonality is w.r.t. the natural pairing in $\check{H}(A_0)$.\\

Because of identity \eqref{eq:firstvariationenergy} below, we will often handle elliptic boundary conditions which are symmetric, the latter meaning the twisted Dirac operator restricted to the space of smooth sections satisfying the boundary condition is symmetric.
This is equivalent to $B\subset B^{\rm ad}$, that is, to $\displaystyle \int_{\partial M}\langle\varphi,\nu\cdot\psi\rangle\,d\mu_g=0$ for all twisted spinors $\varphi,\psi$ on $M$ satisfying the boundary condition.
Note that this condition is weaker than self-adjointness.
For instance, the APS boundary condition is such a condition: it is elliptic and the symmetry condition is fulfilled, however the APS boundary condition is self-adjoint if and only if $\ker(A_0)=\{0\}$, see \cite[Example 5.12]{zbMATH06748684} and below.
Note also that only $\displaystyle\Re\left(\int_{\partial M}\langle\varphi,\nu\cdot\psi\rangle\,d\mu_g=0\right)$ for all twisted spinors $\varphi,\psi$ on $M$ satisfying the boundary condition is required for Dirac-harmonic maps.\\

We will consider the following boundary conditions for the Dirac operator:
\begin{enumerate}
    \item generalized Atiyah-Patodi-Singer (gAPS): define, for any $a\in(-\infty,0]$, 
    \[B(a):=H_{(-\infty,a)}^{\frac{1}{2}}(A_0)\subset L^2(\partial M,\Sigma\otimes E),\]
    with $H_{(-\infty,a)}^{\frac{1}{2}}(A_0)$ as above.
    For $a=0$, that condition reduces to the Atiyah-Patodi-Singer (APS) boundary condition and will be denoted by $B_{\rm APS}$.
    Then $B(a)$ defines an elliptic boundary condition \cite[Example 3.21]{zbMATH06748684} for $D^E$, which is self-adjoint if and only if $H_{[a,-a]}^{\frac{1}{2}}(A_0)=\{0\}$ (so, for $a=0$, it just means $\ker(A_0)=\{0\}$).
    Namely, for $a\leq0$, we have $B(a)^{\rm ad}=B(a)\oplus H_{[a,-a]}^{\frac{1}{2}}(A_0)$ and therefore $B(a)\subset B(a)^{\rm ad}$, which means $B(a)$ always makes $D^E$ symmetric; moreover, $B(a)= B(a)^{\rm ad}$ if and only if $H_{[a,-a]}^{\frac{1}{2}}(A_0)=\{0\}$, which is equivalent to $A$ having no eigenvalue between $a$ and $-a$.
    Note that, for $a>0$, the boundary condition $B(a)$ can still be defined, however it does make $D^E$ even symmetric since $(\varphi,\psi)\mapsto\left(\sigma_{D^E}(\nu^\flat)\varphi,\psi\right)$ does not vanish identically on $B(a)$ and hence $B(a)\not\subset B(a)^{\rm ad}$.
    \item chiral \\
     \begin{align*}
         B_{\rm CHI}^\pm:=\ker\left(\operatorname{Id}\mp\nu\cdot G\right),
     \end{align*}
     where $G=\mathcal{G}\otimes\mathrm{id}_E$ comes from a chirality operator $\mathcal{G}\in\Gamma(M,\mathrm{End}(\Sigma M))$ i.e., $\mathcal{G}$ is a parallel involutive Hermitian field of endomorphisms of the spinor bundle $\Sigma M$ which anti-commutes with Clifford multiplication by tangent vectors on $M$.
     Note that $G$ inherits those properties from $\mathcal{G}$ i.e., $G$ is an involutive Hermitian field of endomorphisms of the twisted spinor bundle $\Sigma M\otimes E$ which anti-commutes with $X\cdot_M\otimes\mathrm{id}_E$, for every tangent vector $X$ on $M$.
     Both $G$ and its restriction to $\Sigma\otimes E$ will be denoted the same.
     Then $B_{\rm CHI}^\pm$ defines an elliptic self-adjoint boundary condition for $D^E$, see e.g. \cite[Example 3.20]{zbMATH06748684}.
    \item\label{item:bcmitbag} MIT bag
      \begin{align*}
         B_{\rm MIT}^\pm:=\ker\left(\operatorname{Id}\mp i\nu\cdot\right)
     \end{align*}
     Mind that $B_{\rm MIT}^\pm$, though elliptic, is not self-adjoint; it does not make $D^E$ even symmetric.
     Namely since, for all $\psi,\varphi\in B_{\rm MIT}^\pm$, one has $\Re(\langle\psi,\nu\cdot\varphi\rangle)=\pm\Re(\langle\psi,i\varphi\rangle)=\pm\Im(\langle\varphi,\psi\rangle)$, there is no reason for $\Re\left(\varphi,\nu\cdot\psi\right)_{L^2(\partial M)}$ to vanish and therefore the MIT bag boundary condition must be excluded from our variational ansatz.
    \item modified generalized Atiyah-Patodi-Singer (mgAPS): 
    define, for any $a\in(-\infty,0]$, 
    \[B_{\rm mgAPS}(a):=\left\{\varphi\in\check{H}(A_0),\,\varphi+\nu\cdot\varphi\in H_{(-\infty,a)}^{\frac{1}{2}}(A_0)\right\},\]
    where, as above, $\nu\cdot$ stands for $\nu\cdot\otimes\mathrm{id}_E$.
    Then, as for the generalized APS boundary condition, $B_{\rm mgAPS}(a)$ is elliptic \cite[Example 3.22]{zbMATH06748684} and is self-adjoint if and only if $H_{[a,-a]}^{\frac{1}{2}}(A_0)=\{0\}$.
    Namely, splitting any $\varphi\in\check{H}(A_0)$ in the form $\varphi=\varphi_{(-\infty,a)}+\varphi_{[a,-a]}+\varphi_{(-a,\infty)}$, we have $\varphi+\nu\cdot\varphi\in H_{(-\infty,a)}^{\frac{1}{2}}(A_0)$ if and only if $\varphi_{[a,-a]}=0$ and $\varphi_{(-a,\infty)}=-\nu\cdot\varphi_{(-\infty,a)}$ using again the fact that $\{A_0,\nu\cdot\}=0$.
    In other words, $B_{\rm mgAPS}(a)=\left\{\varphi-\nu\cdot\varphi,\,\varphi\in H_{(-\infty,a)}^{\frac{1}{2}}(A_0)\right\}$.
    Therefore $B_{\rm mgAPS}(a)^{\rm ad}=B_{\rm mgAPS}(a)\oplus H_{[a,-a]}^{\frac{1}{2}}(A_0)$.
\item transmission conditions: given any closed (it could be assumed complete though, and even to have a nonempty boundary) Riemannian spin manifold $(\tilde{M}^n,\tilde{g})$ and any closed embedded orientable hypersurface $N$ in $\tilde{M}$, one may cut $\tilde{M}$ along $N$ and obtain a new compact Riemannian manifold with boundary, namely $\displaystyle M^n:=(\tilde{M}\setminus N)\dot{\cup} N\dot{\cup}(-N$) with induced metric $g$, where $-N$ denotes $N$ with the reverse induced orientation.
This is due to the fact that, when splitting $\partial M$ into two copies of $N$, the inward unit normal along one copy is the outward one along the other copy.
Then $M$ is spin and the twisted Dirac operator $\tilde{D}^E$ induces a twisted Dirac operator $D^E$ acting on sections of $\Sigma M\otimes E\to M$.
Moreover, $A_0\oplus -A_0$ becomes a $D^E$-adapted boundary operator.
Define 
\[B_{\rm trans}:=\left\{(\varphi,\varphi)\,|\,\varphi\in H^{\frac{1}{2}}(N,E)\right\}\subset \check{H}(A_0\oplus -A_0).\]
Then $B_{\rm trans}$ defines an elliptic self-adjoint boundary condition for $D^E$ on $M$, which moreover satisfies $\displaystyle \mathrm{ind}(\tilde{D}^E)=\mathrm{ind}(D^E)_{B_{\rm trans}}$ \cite[Example 3.23 \& Theorem 4.5]{zbMATH06748684}.
\end{enumerate}

\subsection{Index Theorem with boundary}\label{ss:indexthmwithboundary}

When Atiyah-Patodi-Singer (APS) boundary conditions are imposed along the boundary, then the Atiyah-Patodi-Singer index theorem \cite{MR397797,MR397798,MR397799} for a twisted Dirac operator $D^E$ on an even-dimensional compact Riemannian manifold $(M^n,g)$ as above states that
\begin{align}\label{t:APSindexthm}
\mathrm{ind}(D_+^E)=(\hat{A}(TM)\cdot\ch(E))[M]+T(\hat{A}(TM)\cdot\ch(E))[\partial M]-\frac{h(A)+\eta(A)}{2},
\end{align}
where $\mathrm{ind}(D_+^E):=\dim(\ker(D_+^E))-\dim(\mathrm{coker}(D_+^E))$ is the index of $D_+^E$, \(h(A):=\dim(\ker(A))\) and the eta-invariant $\eta(A)$ of $A$ is the value at $0$ of the unique meromorphic extension of the eta function, which is defined by
\begin{align*}
\eta_A(s):=\sum_{\lambda\in\spec A,\lambda\neq 0}(\text{sign }\lambda)|\lambda|^{-s}   
\end{align*}
for every complex number $s$ of sufficiently large real part.
Moreover, \(\hat A(TM)\) represents the \(\hat A\)-genus of the tangent bundle of the manifold \(M\), \(\ch(E)\) the Chern character of the vector bundle \(E\) and \(T\) denotes the so-called transgression form.
Recall that, in even dimension $n$, the complex spinor bundle $\Sigma M\to M$ of $M$ splits under the Clifford action of the complex volume form into $\Sigma_+M\oplus \Sigma_-M$ and $D_+^E\colon\Gamma(\Sigma_+M\otimes E)\to\Gamma(\Sigma_-M\otimes E)$.
\\
Mind that the boundary operator $A$ here is not the boundary-adapted operator $A_0$ that we need to define elliptic boundary conditions for the twisted Dirac operator.
Actually, $A=D_{\partial M}^E$, the boundary twisted Dirac operator, so $A_0=A$ in case $n=\dim(M)$ is odd, however $A_0=A\oplus -A$ in case $n$ is even.\\

In case $n\in2+4\mathbb{Z}$, the Atiyah-Patodi-Singer index formula simplifies, at least when the twist bundle $E\to M$ is real.
Namely, since $\hat{A}(TM)$ is a polynomial in the Pontryagin classes of the real vector bundle $TM\to M$, its cohomology class is represented by closed forms of degree $4k$ only.
Since $n\in2+4\mathbb{Z}$, only the odd Chern classes involved in $\mathrm{ch}(E)$ contribute to $\hat{A}(TM)\cdot\mathrm{ch}(E)[M]$.
But, because of $E\to M$ being assumed real, all its odd Chern classes must vanish, see e.g. \cite[Lemma 14.9]{zbMATH03468033}.
Therefore $\hat{A}(TM)\cdot\mathrm{ch}(E)[M]=0$ and $T(\hat{A}(TM)\cdot\mathrm{ch}(E))[\partial M]=0$ for the same reason.
Moreover, because the twisted Dirac operator of the boundary has symmetric spectrum in dimension $1+4\mathbb{Z}$ due to the presence of a real or quaternionic structure on the spinor bundle anti-commuting with the Clifford multiplication by vectors \cite[Section 2.5]{MR2230574}, one has $\eta(A)=0$, so that \eqref{t:APSindexthm} reduces to 
\[\mathrm{ind}(D_+^f)=-\frac{1}{2}h(A),\]
which already implies that $h(A)$ must be even.
Note that there is no need for assuming the boundary to be totally geodesic here.\\

At this point it is important to underline that, when Atiyah-Patodi-Singer boundary conditions for the spinor component are considered, the index of $D_+^f$ is not homotopy-invariant because of the dimension of the kernel of $A=D_{\partial M}^E$ already not being homotopy-invariant.
Namely, on every closed spin manifold of dimension $3+4k$ 
endowed with a fixed twist-bundle with connection $\nabla^E$,  there always exists a Riemannian metric for which the Dirac operator twisted by $E$ has nontrivial kernel \cite[Theorem 4.1]{zbMATH01121705}, nonetheless that kernel is generically trivial \cite{zbMATH05480735}, therefore its dimension depends on the choice of metric and is thus not homotopy-invariant.
Therefore we cannot hope for the dimension of the kernel of $A$ to be nonzero at least for small deformations of the metric or the twist-bundle.
That lack of homotopy invariance of the index under APS boundary conditions had already long been noticed, although the homotopy invariance under local boundary conditions does hold, see e.g. \cite[Remark 22.25]{MR1233386}.
We note also here that the generic minimality of the kernel of $D^f$ in terms of $(g,h,f)$ on a closed manifold $\overline{M}$ was established under certain assumptions in \cite[Theorem 3.4]{zbMATH07094314} but when $\dim(\overline{M})=2$, which is not what we look at here since $\partial M$ is odd-dimensional in most of our applications.\\

Another point of attention is the following.
Assuming that, given a smooth one-parameter-family of smooth maps $(f_t)_{t\in(-\varepsilon,\varepsilon)}$ from $M$ to $N$, there exists, for any $t\in(-\varepsilon,\varepsilon)$, a nonzero $\phi_t\in\ker(D^{f_t})$ satisfying the APS boundary condition, it is {\sl a priori} not always possible to have a \emph{smooth} one-parameter-family of $(\phi_t)_{t\in(-\varepsilon,\varepsilon)}$ of such nonzero spinors.
Even if the harmonic map $f_0$ is assumed to be \emph{perturbation-minimal} in the sense of \cite[Def. 8.1]{MR3070562} that is, that $\dim(\ker(D^{f_t}))\geq\dim(\ker(D^{f_0}))$ holds for every smooth variation of $f_0$, then $\dim(\ker(D^{f_t}))=\dim(\ker(D^{f_0}))$ must actually hold for all sufficiently small $t$.
Nonetheless the very first step in constructing a smooth one-parameter-family of twisted harmonic spinors $(\phi_t)_{t\in(-\varepsilon,\varepsilon)}$, which would be the statement of \cite[Prop. 8.2]{MR3070562} in our context, fails: although the orthogonal projection $\pi_t$ onto the kernel of $D^{f_t}$ -- and hence onto that of the parallely transported operator $\hat{D}_t$ as in the proof of \cite[Prop. 8.2]{MR3070562} -- can still be defined, the lack of a discrete spectrum of $D^{f_t}$, an operator which is not self-adjoint because of $\ker(A_t)\neq\{0\}$ in most of our applications, does not ensure $\pi_t(\phi_0)$ to converge towards $\pi_0(\phi_0)$ in any suitable topology as $t\to0$, whatever $\phi_0\in\ker(D^{f_0})$ is.

\begin{Bem}\label{r:maphomotopyinvarianceindim2mod4}
In case $n\in 2+4\mathbb{Z}$ and Dirichlet and APS boundary conditions being fixed for some smooth map $f_0\colon M^n\to N^m$ and for the twisted spinor field $\phi_0$ respectively, the index of $D_+^f$ does not depend on $f\in[f_0]$.
Namely, for any one-parameter-family of smooth maps $(f_t)_{t\in(-\varepsilon,\varepsilon)}$ from $M$ to $N$ satisfying $f_t{}_{|_{\partial M}}=f_0{}_{|_{\partial M}}$ for all $t\in(-\varepsilon,\varepsilon)$, one has $\mathrm{ind}(D_+^{f_t})=-\frac{1}{2}h(A(f_t))=-\frac{1}{2}h(A(f_0))$ for all $t\in(-\varepsilon,\varepsilon)$ because of $A(f_t)=A(f_0)$ (the operator $A$ only depends on the values of $f$ along $\partial M$).
This has the interesting consequence that, if $h(A(f_0))\neq0$, then $\dim(\ker(D^{f}))\geq|\mathrm{ind}(D_+^{f})|=\frac{1}{2}h(A(f_0))\neq0$ for any $f$ lying in the same relative homotopy class as $f_0$.
Still, as we noticed above, the question arises to find a smooth one-parameter-family of nonzero harmonic twisted spinors -- all satisfying the same APS boundary condition $B_{\rm APS}$ (depending on $A_0=A_0(f_0)$ only) because of the Dirichlet boundary condition for $f_0$ -- given a smooth one-parameter-family of maps from $M$ to $N$.
Besides, because the index is not homotopy-invariant, we cannot handle the case where $f_0$ is not perturbation-minimal: the integers $b_m$ et $d_m$ from \cite[Sect. 9]{MR3070562} cannot be defined, and if so, then probably vanish.
\end{Bem}

For explicit calculations using the index theorem on manifolds with boundary, in particular with application to physics in mind, we recommend the survey \cite{MR598586}.

\section{Variational Formulas on manifolds with boundary and applications}\label{s:variationalformulas}

\subsection{Variational formula}\label{ss:varformula}
From here on let $(M^n,g)$ be any compact 
Riemannian spin manifold with nonempty smooth boundary $\partial M$.
Let $(N,h)$ be any Riemannian manifold.
As above, we consider pairs $(f,\phi)$ where $f\colon M\to N$ is any smooth map and $\phi\in\Gamma(\Sigma M\otimes f^*TN)$ is any smooth section of the spinor bundle twisted by the pull-back tangent bundle.
Given such a $\phi$, we define the section $V_\phi\in\Gamma(f^*TN)$ by duality via
\[h(V_\phi,Y):=\sum_{j=1}^n\langle (e_j\cdot\otimes R_{df(e_j),Y}^N)\phi,\phi\rangle\]
for all tangent vectors $Y$ to $N$, where $\{e_j\}_{1\leq j\leq n}$ is any local orthonormal basis of $TM$.
Recall that, the expression $\langle (X\cdot\otimes R_{df(X),Y}^N)\phi,\phi\rangle$ being real for all $X,Y$, the vector field $V_\phi$ is real as well.
Then the first variation of the functional $E$ defined by 
\begin{equation}\label{eq:defenergyfunctionalDhmaps}
E(f,\phi):=\frac{1}{2}\int_M|df|^2+\Re(\langle D^f\phi,\phi\rangle)\,d\mu_g
\end{equation}
can be expressed as follows.
Given any smooth one-parameter-family $(f_t)_{t\in(-\varepsilon,\varepsilon)}$ of maps from $M$ to $N$ and any "smooth" one-parameter-family $(\phi_t)_{t\in(-\varepsilon,\varepsilon)}$, where $\phi_t\in\Gamma(\Sigma M\otimes f_t^*TN)$ for all $t$, we have 
\begin{eqnarray}\label{eq:firstvariationenergy}
\frac{d}{dt}(E(f_t,\phi_t))_{|_{t=0}}&=&\int_M\Big(\Re\langle D^{f_0}\phi_0,\partial_t\phi(0)\rangle-h(\mathrm{tr}_g(\nabla df_0)-\frac{1}{2}V_{\phi_0},\partial_tf(0))\Big)\,d\mu_g\\
\nonumber&&+\int_{\partial M}\Big(h(\partial_t f(0),df_0(\nu))+\frac{1}{2}\Re\left(\langle\partial_t\phi(0),\nu\cdot\phi_0\rangle\right)\Big)\,d\mu_g.
\end{eqnarray}
Here, $\nu$ denotes the inner unit normal along $\partial M$ and $\partial_t\phi(0)$ is a short notation for the natural covariant derivative of $(\phi(t,x))_{(t,x)\in(-\varepsilon,\varepsilon)\times M}$ along $\partial_t$ at $t=0$, see e.g. \cite[Prop. 5.1]{MR3070562}.\\

We shall always look for critical points of $E$ where the mapping component $f_0$ satisfies either the Dirichlet or the Neumann boundary condition and the spinor component $\phi_0$ satisfies some boundary condition $\phi_0{}_{|_{\partial M}}\in B$.
We only look for variations $(f_t)_t$ of $f_0$ and $(\phi_t)_t$ of $\phi_0$ satisfying the same boundary condition as $f_0$ and $\phi_0$ respectively.
On the one hand, in case of the Dirichlet boundary condition for $f_0$, this means the whole family $(f_t)_{t\in(-\varepsilon,\varepsilon)}$ of maps from $M$ to $N$ is required to satisfy that same condition i.e., $f_t{}_{|_{\partial M}}=u$ for all $t\in(-\varepsilon,\varepsilon)$ and some fixed map $u\in C^\infty(M,N)$.
As a consequence, whether the boundary condition for $f_0$ being the Dirichlet or the Neumann one, the boundary term $\displaystyle\int_{\partial M}h(\partial_t f(0),df_0(\nu))\,d\mu_g$ will always vanish.
On the other hand, this also requires $\phi_t{}_{|_{\partial M}}\in B$ for all $t\in(-\varepsilon,\varepsilon)$, so that the boundary space $B$ will be chosen so as to satisfy $\int_{\partial M}\Re\left(\langle\psi,\nu\cdot\varphi\rangle\right)\,d\mu_g=0$ for all $\psi,\varphi\in B$.
Here, it is important to note that the term $\partial_t\phi(0)$ is actually the covariant derivative of $s\mapsto\phi_s$ along $\partial_t$ at $s=0$, and there is {\sl a priori} no reason why that covariant derivative should preserve the boundary space $B$ when restricted to $\partial M$.
For instance, there is no reason why the gAPS and the mgAPS boundary conditions satisfy that condition, whatever the boundary condition for the map is.
Assuming $\int_{\partial M}\Re\left(\langle\psi,\nu\cdot\varphi\rangle\right)\,d\mu_g=0$ for all $\psi,\varphi\in B$ holds as well as $\partial_t\phi(0)\in B$ for every variation $(\phi_t)_t$ of some $\phi_0$ satisfying $\phi_t{}_{|_{\partial M}}\in B$ for all $t$, the boundary term $\displaystyle\int_{\partial M}\Re\left(\langle\partial_t\phi(0),\nu\cdot\phi_0\rangle\right)\,d\mu_g$ will automatically vanish.
For example, the chirality boundary condition satisfies that requirement, whether the Dirichlet or the Neumann boundary condition for the map is chosen, since it only involves the $\Sigma M$ part.
If the Dirichlet boundary condition for the map is fixed, then $B$ is automatically preserved by the above covariant derivative -- whatever $B$ is -- since the twist bundle as well as the connection along the boundary do not depend on $t$.\\

Thus, assuming $\int_{\partial M}\Re\left(\langle\psi,\nu\cdot\varphi\rangle\right)\,d\mu_g=0$ for all $\psi,\varphi\in B$ holds as well as $\partial_t\phi(0)\in B$ for every variation $(\phi_t)_t$ of some $\phi_0$ satisfying $\phi_t{}_{|_{\partial M}}\in B$ for all $t$, a Dirac-harmonic map in our context will be a critical point of $E$ 
that is, a pair $(f_0,\phi_0)$ satisfying 
\begin{equation}\label{eq:DhmapsonMwithboundary}
\left\{\begin{array}{ll}D^{f_0}\phi_0&=0\,,  \\ 
\mathrm{tr}_g(\nabla df_0)&=\frac{1}{2}V_{\phi_0}\end{array}\right.
\end{equation}
as well as $f_0{}_{|_{\partial M}}=u$ or $df_0(\nu)=0$ on $\partial M$ respectively and $\phi_0{}_{|_{\partial M}}\in B$.\\

In case the above assumptions on $B$ are not satisfied, we can just declare Dirac-harmonic maps to be pairs $(f_0,\phi_0)$ satisfying \eqref{eq:DhmapsonMwithboundary} as well as the corresponding required boundary conditions.
But then we must stay aware that those Dirac-harmonic maps do not fit with our variational ansatz, i.e.
they are not critical points of the energy functional \eqref{eq:defenergyfunctionalDhmaps}.\\

In that setting the main existence result is the following, where a Dirac-harmonic map $(f_0,\phi_0)$ is called \emph{uncoupled} if $V_{\phi_0}=0$ (see \cite[Definition 1.1]{MR3070562}):

\begin{Prop}\label{p:existuncoupledDhfromvarformula}
Assume $f_0\colon M\to N$ to be a harmonic map with either Dirichlet or Neumann boundary condition
and $\phi_0\in\ker(D^{f_0})$ such that $\phi_0{}_{|_{\partial M}}\in B$, where $B$ is chosen so that $\displaystyle\int_{\partial M}\Re\left(\langle\psi,\nu\cdot\varphi\rangle\right)\,d\mu_g=0$ for all $\psi,\varphi\in B$.
Assume also that, for every smooth variation $(f_t)_{t\in(-\varepsilon,\varepsilon)}$ with either Dirichlet or Neumann boundary condition of $f_0$, there exists a variation $(\phi_t)_{t\in(-\varepsilon,\varepsilon)}$ of $\phi_0$ such that $\displaystyle\frac{d}{dt}\Re\left(D^{f_t}\phi_t,\phi_t\right)_{L^2(M)}{}_{|_{t=0}}=0$ and $\phi_t{}_{|_{\partial M}}\in B$ for all $t\in(-\varepsilon,\varepsilon)$.
Then $(f_0,\phi_0)$ is an uncoupled Dirac-harmonic map with Dirichlet or Neumann boundary condition for $f_0$ and boundary condition $B$ for $\phi_0$.
\end{Prop}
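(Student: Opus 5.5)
Since $(f_0,\phi_0)$ already satisfies $D^{f_0}\phi_0=0$ together with the required boundary conditions, the plan is to show $V_{\phi_0}=0$. The first equation of \eqref{eq:DhmapsonMwithboundary} holds by assumption. Because $f_0$ is harmonic, $\mathrm{tr}_g(\nabla df_0)=0$, so the second equation is then equivalent to $V_{\phi_0}=0$, and this also gives the uncoupledness. The tool is the first variation formula \eqref{eq:firstvariationenergy}, applied to $t\mapsto E(f_t,\phi_t)$ along the variations supplied by the hypothesis.

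Fix an arbitrary smooth variation $(f_t)_t$ of $f_0$ that respects the chosen (Dirichlet or Neumann) boundary condition, and let $(\phi_t)_t$ be the variation of $\phi_0$ given by the assumption. I split
\[E(f_t,\phi_t)=\tfrac12\int_M|df_t|^2\,d\mu_g+\tfrac12\Re\left(D^{f_t}\phi_t,\phi_t\right)_{L^2(M)}.\]
Differentiating at $t=0$, the second summand contributes $0$ by hypothesis. The first summand is the harmonic-map energy, and its derivative is $-\int_M h(\tau(f_0),\partial_tf(0))\,d\mu_g+\int_{\partial M}h(\partial_tf(0),df_0(\nu))\,d\mu_g$. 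This vanishes: $\tau(f_0)=0$, and the boundary term is killed by either $\partial_tf(0)_{|\partial M}=0$ (Dirichlet) or $df_0(\nu)=0$ (Neumann). Hence $\frac{d}{dt}E(f_t,\phi_t)_{|t=0}=0$.

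Next I compare this with the right-hand side of \eqref{eq:firstvariationenergy}. The interior Dirac term vanishes since $D^{f_0}\phi_0=0$, the harmonic-map terms vanish as before, and the map boundary term vanishes as just explained. The only delicate point is the spinorial boundary term $\frac12\int_{\partial M}\Re\langle\partial_t\phi(0),\nu\cdot\phi_0\rangle$. I expect this to be the main obstacle, since the text warns that $\partial_t\phi(0)$ need not lie in $B$.

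I would handle it without needing $\partial_t\phi(0)\in B$. Differentiate the identity $\int_{\partial M}\Re\langle\phi_t,\nu\cdot\phi_t\rangle\,d\mu_g=0$, which holds for all $t$ because $\phi_t{}_{|_{\partial M}}\in B$, using metric compatibility of the connection. Since Clifford multiplication by $\nu$ is parallel in $t$ and skew-adjoint, $\Re\langle\psi,\nu\cdot\psi'\rangle$ is antisymmetric in $(\psi,\psi')$ only up to conjugation. The derivative is $2\int_{\partial M}\Re\langle\partial_t\phi(0),\nu\cdot\phi_0\rangle$, and it should vanish, which kills the boundary term. In the Dirichlet case one can alternatively note, as the text does, that $\partial_t\phi(0)_{|\partial M}\in B$ directly.

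Combining the two computations gives $\frac12\int_M h(V_{\phi_0},\partial_tf(0))\,d\mu_g=0$ for every admissible variation. Admissible variations with Dirichlet or Neumann boundary condition realize every smooth section $X$ of $f_0^*TN$ compactly supported in the interior, for instance via $f_t=\exp_{f_0}(tX)$. Therefore $V_{\phi_0}=0$ on the interior, and by continuity on all of $M$, which completes the argument.
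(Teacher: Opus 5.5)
There is a genuine gap in the step you flag as the main obstacle: the spinorial boundary term is not correctly handled in the Neumann case. Your Dirichlet fallback is fine, and the rest of your structure matches the paper's argument. That structure is: get $\frac{d}{dt}E(f_t,\phi_t)_{|_{t=0}}=0$ from the hypotheses, compare with \eqref{eq:firstvariationenergy}, kill the boundary terms, then let $\partial_tf(0)$ vary freely in the interior.

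\textbf{Why the differentiation trick fails.} Clifford multiplication by $\nu$ is skew-Hermitian, also after twisting with the real bundle $f_t^*TN$. Hence one has \emph{exactly} $\Re\langle\psi',\nu\cdot\psi\rangle=-\Re\langle\psi,\nu\cdot\psi'\rangle$, with no conjugation left after taking real parts. So $\Re\langle\phi_t,\nu\cdot\phi_t\rangle$ vanishes pointwise for every section, whatever its boundary values. Differentiating it gives
\[0=\Re\langle\partial_t\phi(0),\nu\cdot\phi_0\rangle+\Re\langle\phi_0,\nu\cdot\partial_t\phi(0)\rangle,\]
and the two summands cancel identically rather than adding up to $2\Re\langle\partial_t\phi(0),\nu\cdot\phi_0\rangle$. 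The identity you differentiate therefore carries no information about $B$. The hypothesis on $B$ is a polarized condition on pairs $(\psi,\varphi)\in B\times B$, and its diagonal is vacuous.

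A sanity check confirms this. Your argument never uses $\phi_t{}_{|_{\partial M}}\in B$, so it would give $\int_{\partial M}\Re\langle\partial_t\phi(0),\nu\cdot\phi_0\rangle\,d\mu_g=0$ for arbitrary variations such as $f_t=f_0$, $\phi_t=\phi_0+t\chi$. That would force $\phi_0{}_{|_{\partial M}}=0$. It would also make the symmetry hypothesis on $B$ superfluous, for instance admitting the MIT bag condition, which the paper explicitly excludes.

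\textbf{The repair is already in your last paragraph.} There you only use variations $f_t=\exp_{f_0}(tX)$ with $X$ compactly supported in the interior of $M$. These are admissible for both the Dirichlet and the Neumann problem, so restrict to them from the start. Then $f_t=f_0$ near $\partial M$, so along $(-\varepsilon,\varepsilon)\times\partial M$ the pulled-back bundle and its connection are $t$-independent. Consequently $\partial_t\phi(0)_{|_{\partial M}}$ is just the ordinary derivative of the curve $t\mapsto\phi_t{}_{|_{\partial M}}$ in the closed linear subspace $B$, and hence lies in $B$. Now apply the symmetry hypothesis with $\psi=\partial_t\phi(0)_{|_{\partial M}}$ and $\varphi=\phi_0{}_{|_{\partial M}}$; this kills the spinorial boundary term.

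This is precisely the mechanism behind the paper's ``the boundary terms vanish'': in its Section 3.1 it observes that when the map is fixed along $\partial M$, the covariant derivative in $t$ preserves $B$. With this change the rest of your argument goes through.
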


\begin{proof}
The proof goes {\sl verbatim} as that of \cite[Cor. 5.2]{MR3070562}: since, by assumption, $\displaystyle\frac{d}{dt} E(f_t,\phi_t)_{|_{t=0}}=\frac{1}{2}\frac{d}{dt}\int_M|df_t|^2\,d\mu_g{}_{|_{t=0}}=0$ holds, and the boundary terms vanish, the pair $(f_0,\phi_0)$ must be Dirac-harmonic.
\end{proof}

Again, the vanishing of the boundary term $\displaystyle\Re\left(\partial_t\phi(0),\nu\cdot\phi_0\right)_{L^2(\partial M)}$ holds as soon as (but not only when) $B$ is a symmetric boundary condition for $D^{f_0}$ and $\partial_t\phi(0)\in B$ for every variation $(\phi_t)_t$ of some $\phi_0$ satisfying $\phi_t{}_{|_{\partial M}}\in B$ for all $t$.
This will be always fulfilled in case $f_0$ satisfies the Dirichlet boundary condition for $f_0$, however not for every $B$ in case $f_0$ satisfies the Neumann boundary condition.\\

As in the closed case, there exist obvious examples of uncoupled Dirac-harmonic maps.
First, take any compact Riemannian spin manifold $(M^n,g)$ with boundary admitting a nonzero harmonic spinor $\psi_0$ with given arbitrary boundary condition $\psi_0{}_{|_{\partial M}}\in B$ and any \emph{constant} map $f_0\colon M\to N^m$ to any Riemannian manifold $(N^m,h)$, then the pair $(f_0,\phi:=\psi_0^{\oplus^m})$ is a Dirac-harmonic map with Neumann boundary condition for $f_0$ as well as $\phi_0{}_{|_{\partial M}}\in B^{\oplus^m}$.
Another category of trivial examples consists of those pairs $(f_0,0)$, where $f_0$ is a harmonic map with the required boundary condition.\\

In the sequel, we want to avoid those trivial examples.

\subsection{Application of the variational formula}\label{ss:appvarformula}

As in the closed case, if a $\mathbb{Z}_2$-grading on $\Sigma M$ is available, uncoupled Dirac-harmonic maps can be obtained:

\begin{Prop}\label{p:examplesDhmapsZ2grading}
Assume the existence of a parallel involutive real or complex-linear, symmetric or Hermitian, endomorphism field $\mathcal{G}$ 
of $\Sigma M$ which anti-commutes with the Clifford multiplication by tangent vectors on $M$.
Split $\Sigma M=\Sigma_+M\oplus\Sigma_-M$, where $\Sigma_\pm M:=\ker(\mathcal{G}\mp\mathrm{id})\subset\Sigma M$.
Let $f_0\colon M\to N$ be any harmonic map satisfying the Dirichlet or Neumann boundary condition.
Let $B\subset \check{H}(A_0)$ be any elliptic boundary condition for $D^{f_0}$ satisfying $\partial_t\phi(0)\in B$ for every variation $(\phi_t)_t$ of some $\phi_0$ satisfying $\phi_t{}_{|_{\partial M}}\in B$ for all $t$.
Assume also that $\mathcal{G}\otimes\mathrm{id}_{|_{\partial M}}$, seen as a pointwise endomorphism field of $\Sigma M\otimes f_0^*TN$, preserves $B$, i.e., $(\mathcal{G}\otimes\mathrm{id})(B)\subset B$.
Then for any $\phi_0\in \ker(D^{f_0})$ with $\phi_0{}_{|_{\partial M}}\in B$, both $(f_0,\phi_0{}_\pm)$ are uncoupled Dirac-harmonic maps.
\end{Prop}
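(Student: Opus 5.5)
The plan is to follow the closed-case argument of \cite{MR3070562} and reduce everything to Proposition \ref{p:existuncoupledDhfromvarformula}. Set $G:=\mathcal{G}\otimes\mathrm{id}$ on $\Sigma M\otimes f^*TN$ for any map $f$. Since $\mathcal{G}$ is parallel and anti-commutes with Clifford multiplication, $G$ anti-commutes with $D^{f}$ for every $f$. It is also involutive and symmetric/Hermitian, and it commutes with $\mathrm{id}\otimes R^N$. Hence $D^{f_0}$ maps $\Gamma(\Sigma_\pm M\otimes f_0^*TN)$ into $\Gamma(\Sigma_\mp M\otimes f_0^*TN)$. Given $\phi_0\in\ker(D^{f_0})$, write $\phi_{0\pm}:=\frac12(\phi_0\pm G\phi_0)$. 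Then $D^{f_0}\phi_{0\pm}=\frac12(D^{f_0}\phi_0\mp G D^{f_0}\phi_0)=0$. Because $G(B)\subset B$ and $B$ is a linear subspace, $\phi_{0\pm}{}_{|_{\partial M}}\in B$.

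The first step is to show directly that $V_{\phi_{0\pm}}=0$, which makes the pair uncoupled at once. For $\psi$ with $G\psi=\pm\psi$,
\[
\langle (X\cdot\otimes R)\psi,\psi\rangle=\langle (X\cdot\otimes R)G\psi,G\psi\rangle=-\langle G(X\cdot\otimes R)\psi,G\psi\rangle=-\langle (X\cdot\otimes R)\psi,\psi\rangle,
\]
using that $G$ is an isometry with $G^2=\mathrm{id}$. This expression therefore vanishes, and so $V_{\phi_{0\pm}}=0$. Since $f_0$ is harmonic, \eqref{eq:DhmapsonMwithboundary} holds together with $D^{f_0}\phi_{0\pm}=0$, and the required boundary conditions are satisfied.

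To stay within the variational framework, the plan is to also check the hypotheses of Proposition \ref{p:existuncoupledDhfromvarformula}. For a variation $f_t$, parallel transport along $t\mapsto f_t$ commutes with $G$, because $G$ acts only on the $\Sigma M$ factor. So $\phi_t:=P_t\phi_{0\pm}$ remains a section of $\Sigma_\pm M\otimes f_t^*TN$. Then $D^{f_t}\phi_t$ lies in $\Sigma_\mp M\otimes f_t^*TN$, and the two chirality subbundles are pointwise orthogonal. Consequently $\Re(D^{f_t}\phi_t,\phi_t)_{L^2}\equiv0$, and its $t$-derivative vanishes.

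The main obstacle is the boundary bookkeeping. Two things must be secured. First, $\phi_t{}_{|_{\partial M}}\in B$ for all $t$; this is where the hypothesis on $B$ is used, namely that $\partial_t\phi(0)\in B$ and that $B$ is preserved along the family. In the Dirichlet case this is automatic, since the bundle on $\partial M$ does not depend on $t$. Second, the boundary term $\Re\langle\partial_t\phi(0),\nu\cdot\phi_{0\pm}\rangle$ in \eqref{eq:firstvariationenergy} must vanish. This also follows from chirality: $\partial_t\phi(0)=0$ for the parallel-transported family, and in any case $\nu\cdot$ swaps the $\pm$ parts, so the pointwise pairing of $\Sigma_\pm$ with $\nu\cdot\Sigma_\pm$ is zero. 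Once these are in place, Proposition \ref{p:existuncoupledDhfromvarformula} gives the conclusion.
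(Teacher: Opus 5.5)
Your proposal is correct and is essentially the paper's argument: you split $\phi_0$ with $G=\mathcal{G}\otimes\mathrm{id}$, use $\{D^{f_0},G\}=0$ and $G(B)\subset B$ to get $D^{f_0}\phi_{0\pm}=0$ with $\phi_{0\pm}{}_{|_{\partial M}}\in B$, kill $V_{\phi_{0\pm}}$ by chirality, and kill the boundary term in \eqref{eq:firstvariationenergy} because $\nu\cdot$ exchanges $\Sigma_\pm M$ and $\Sigma_\mp M$. The only deviation is your detour through Proposition \ref{p:existuncoupledDhfromvarformula} with parallel-transported $\phi_t$, which has two weak points: in the Neumann case, $\phi_t{}_{|_{\partial M}}\in B$ does not follow from the stated hypothesis (it only says $\partial_t\phi(0)\in B$ for families already in $B$), and the symmetry hypothesis on $B$ required by that proposition is not assumed here. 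The paper instead projects an arbitrary admissible variation onto $\Sigma_\pm M$; in any case, your direct verification of \eqref{eq:DhmapsonMwithboundary} in the second paragraph already suffices.
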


\begin{proof}
The proof follows exactly that of \cite[Cor. 6.1]{MR3070562}: given any variation $(f_t)_{t\in(-\varepsilon,\varepsilon)}$ satisfying the Dirichlet or Neumann boundary condition, consider an arbitrary variation $(\phi_t)_{t\in(-\varepsilon,\varepsilon)}$ of $\phi_0$ with $\phi_t{}\in\Gamma(\Sigma M\otimes f_t^*TN)$ and $\phi_t{}_{|_{\partial M}}\in B$ for all $t\in(-\varepsilon,\varepsilon)$.
As above let $\phi_t=\phi_t{}_++\phi_t{}_-$ with $\phi_t{}_\pm\in\Gamma(\Sigma_\pm M\otimes f_t^*TN)$, for all $t\in(-\varepsilon,\varepsilon)$.
Since $D^{f_0}\phi_0=0$ and $\phi_0{}_{|_{\partial M}}\in B$, we also have $D^{f_0}\phi_0{}_\pm=0$ as well as $\phi_0{}_\pm{}_{|_{\partial M}}\in B$, using $\{D^{f_0}, \mathcal{G}\otimes\mathrm{id}\}=D^{f_0}\circ(\mathcal{G}\otimes\mathrm{id})+(\mathcal{G}\otimes\mathrm{id})\circ D^{f_0}=0$ 
and $(\mathcal{G}\otimes\mathrm{id})(B)\subset B$. 
Moreover, because of $\mathcal{G}$ anti-commuting with the Clifford multiplication by tangent vectors on $M$, one has $(X\cdot\otimes R_{Y,Z}^N)\phi_0^\pm\in\Gamma(\Sigma_\mp M\otimes f_0^*TN)$ for all $X\in\Gamma(TM)$ and $Y,Z\in\Gamma(f_0^*TN)$, so that $V_{\phi_0{}_\pm}=0$ by the property of $G$ being symmetric or Hermitian. 
Therefore, only $\Re\left(\partial_t\phi_\pm(0),\nu\cdot\phi_0{}_\pm\right)_{L^2(\partial M)}=0$ has to be shown.
But that term vanishes since $\partial_t\phi_\pm(0)\in\Gamma(\Sigma_\pm M_{|_{\partial M}}\otimes f_0^*TN)$ and $\nu\cdot\phi_0{}_\pm\in\Gamma(\Sigma_\mp M_{|_{\partial M}}\otimes f_0^*TN)$, so that $\Re(\langle\partial_t\phi_\pm(0),\nu\cdot\phi_0{}_\pm\rangle)=0$ and, replacing $\phi_t$ by $\phi_t{}_\pm$ in \eqref{eq:firstvariationenergy}, we obtain $\displaystyle\frac{d}{dt}(E(f_t,\phi_t{}_\pm))_{|_{t=0}}=0$ and the claim.
\end{proof}

\begin{Bem}
Actually, Proposition \ref{p:existuncoupledDhfromvarformula} could be applied to prove Proposition \ref{p:examplesDhmapsZ2grading}, provided $B_\pm:=B\cap\left(H^{\frac{1}{2}}(\partial M,\Sigma_\pm M\otimes f_0^*TN)\oplus H^{-\frac{1}{2}}(\partial M,\Sigma_\pm M\otimes f_0^*TN)\right)$ is shown to be an elliptic boundary condition for $D^{f_0}$.
This should be the case, even if it is not completely clear from \cite[Def. 3.7]{zbMATH06748684}.
\end{Bem}

For instance, a $\mathbb{Z}_2$-grading is provided by a chirality operator as above.
In the particular case where $n$ is even and the $\mathbb{Z}_2$-grading $\mathcal{G}$ is provided by the pointwise Clifford action of the complex volume form $\omega_n^{\mathbb{C}}=i^{\lfloor\frac{n+1}{2}\rfloor}e_1\cdot\ldots\cdot e_n$ on $\Sigma M$, the condition $(\mathcal{G}\otimes\mathrm{id})(B_{\mathrm{gAPS}}(a))\subset B_{\mathrm{gAPS}}(a)$ is satisfied for any $a\leq0$:
\begin{Lem}\label{l:GcommuteswithA0}
Let $(M^n,g)$ be any even-dimensional compact Riemannian spin manifold with nonempty boundary and $E\to M$ by any Riemannian real vector bundle with metric connection over $M$.
Let $\mathcal{G}:=\omega_n^{\mathbb{C}}\cdot\colon\Gamma(\Sigma M)\circlearrowleft$ be the $\mathbb{Z}_2$-grading provided by the Clifford action of the complex volume form of $M$.
Then $(\mathcal{G}\otimes\mathrm{id})(B_{\mathrm{gAPS}}(a))\subset B_{\mathrm{gAPS}}(a)$ holds for any $a\leq0$.
\end{Lem}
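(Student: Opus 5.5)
The plan is to show that $G:=\mathcal{G}\otimes\mathrm{id}$, restricted to $\partial M$, commutes with the boundary-adapted operator $A_0$. Then $G$ preserves every eigenspace of $A_0$. Since $G$ is a bounded, smooth, pointwise endomorphism field, it also preserves the $H^{\frac12}$-closure of $\bigoplus_{\lambda_i<a}\ker(A_0-\lambda_i)$, which is exactly $B_{\rm gAPS}(a)$.

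The first step is a pointwise Clifford computation. By assumption $\mathcal{G}=\omega_n^{\mathbb{C}}\cdot$ anticommutes with Clifford multiplication by every tangent vector of $M$, since $n$ is even. For $j\le n-1$ we therefore get
\[
\mathcal{G}\,e_j\cdot\nu\cdot=-e_j\cdot\mathcal{G}\,\nu\cdot=e_j\cdot\nu\cdot\mathcal{G}.
\]
So $\mathcal{G}$ commutes with each product $e_j\cdot\nu\cdot$, and $G$ commutes with $e_j\cdot\nu\cdot\otimes\mathrm{id}$.

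The second step is that $G$ commutes with the covariant derivative $\nabla^{\Sigma\otimes E}_{e_j}$ along $\partial M$. I expect this to be the one point needing care. The connection on $\Sigma=\Sigma M_{|_{\partial M}}$ used in $A_0$ is the intrinsic boundary spin connection $\nabla^{\Sigma M}_X-\frac12 W(X)\cdot\nu\cdot$ (up to sign convention), not simply the restriction of $\nabla^{\Sigma M}$; here $W$ denotes the Weingarten map. I would handle the two pieces separately. First, $\omega_n^{\mathbb{C}}$ is parallel for $\nabla^{\Sigma M}$, so $\mathcal{G}$ commutes with $\nabla^{\Sigma M}_X$. Second, the correction term is Clifford multiplication by the product of the two tangent vectors $W(X)$ and $\nu$, which commutes with $\mathcal{G}$ by the same computation as in the first step. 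Finally, $G$ acts trivially on the $E$-factor, so it commutes with the tensor product connection.

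Combining the two steps gives $GA_0=A_0G$. Because $G$ is moreover an involution, it maps $\ker(A_0-\lambda)$ into itself for every $\lambda$. Hence $G$ maps the finite sums $\bigoplus_{\lambda_i<a}\ker(A_0-\lambda_i)$ into themselves. A zeroth-order smooth endomorphism field is continuous on $H^{\frac12}(\partial M,\Sigma\otimes E)$, so passing to closures gives $G(B_{\rm gAPS}(a))\subset B_{\rm gAPS}(a)$ for every $a\le0$.

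This argument does not use that $E$ is real, nor the sign of $a$. It also agrees with the identification $A_0\cong A\oplus -A$ recalled in Section \ref{ss:ellipticbc}: under that identification $G$ corresponds to the interchange of, or the sign on, the two copies compatible with it.
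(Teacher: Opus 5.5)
Your proof is correct and has the same skeleton as the paper's. Both show that $G=\mathcal{G}\otimes\mathrm{id}$ commutes with $A_0$, so that $G$ preserves the eigenspaces of $A_0$ and hence $B_{\rm gAPS}(a)$.

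The difference lies in how the commutation is obtained:
\begin{itemize}
\item The paper reads it off from the splitting of $\Sigma\otimes E$ into $(\Sigma_\pm M\otimes E)_{|_{\partial M}}$. There $\mathcal{G}$ acts as $\mathrm{Id}\oplus-\mathrm{Id}$, while $A_0=A\oplus-A$ is block-diagonal.
\item You verify it directly, using that $\omega_n^{\mathbb{C}}$ is parallel and commutes with every product of two vectors. This covers both $e_j\cdot\nu\cdot$ and the Weingarten correction term in the boundary connection.
\end{itemize}

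Your computation buys three things.
\begin{itemize}
\item It is self-contained.
\item It does not depend on how the connection in $A_0$ is normalized (restricted or intrinsic, with or without a mean-curvature term).
\item It is exactly what justifies that $A_0$ preserves the chiral halves, a fact the paper's argument uses implicitly.
\end{itemize}
You also make the passage to the $H^{\frac12}$-closure explicit, which the paper leaves unstated. The paper's version buys brevity, at the cost of relying on the standard but unproved compatibility of the identification $A_0\cong A\oplus-A$ with the chiral splitting.

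One small correction to your closing remark. In the identification $A_0\cong A\oplus-A$, the two copies are $\Sigma_\pm M_{|_{\partial M}}$, and $G$ is the sign $\mathrm{Id}\oplus-\mathrm{Id}$. The interchange of the two copies anticommutes with $A\oplus-A$, so it is not a possible description of $G$.
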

\begin{proof}
The complex volume form $\omega_n^{\mathbb{C}}$ acts pointwise as $\mathrm{Id}_{\Sigma_+ M\otimes E}\oplus-\mathrm{Id}_{\Sigma_-M\otimes E}$ -- and the same on $\Gamma(\Sigma\otimes E)$ -- and the boundary operator $A_0=A\oplus -A$ acts diagonally on $\Gamma(\Sigma\otimes E)$, so that the Clifford action of the complex volume form of $M$ commutes with $A_0$ and therefore $(\mathcal{G}\otimes\mathrm{id})(B_{\rm gAPS}(a))\subset B_{\rm gAPS}(a)$ is fulfilled.
\end{proof}

A fundamental class of examples is obtained by shifting the spectral bound $a$ in $B_{\rm gAPS}(a)$:
\begin{Thm}\label{t:existDhmapsgAPSasmall}
Let $(M^n,g)$ be any even-dimensional compact Riemannian spin manifold with nonempty boundary and $(N^m,h)$ be any compact Riemannian manifold of nonpositive curvature and with either empty or convex boundary. 
Let $[f]$ be any relative homotopy class of maps $M\to N$ satisfying the Dirichlet boundary condition $f_{|_{\partial M}}=u$ for some given $u\in C^\infty(\partial M,N)$.
Then there is a real $a_0\leq0$ such that, for any real $a\leq a_0$, there exists a nontrivial uncoupled Dirac-harmonic map $(f_0,\phi_0)$ with $f_0\in [f]$ and $\phi_0{}_{|_{\partial M}}\in B_{\rm gAPS}(a)$.
\end{Thm}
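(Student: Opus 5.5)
Fix the harmonic map first. By Theorem \ref{t:Hamilton}(1), the relative homotopy class $[f]$ contains a harmonic map $f_0$ with $f_0{}_{|_{\partial M}}=u$. Because the Dirichlet condition fixes $f_0$ along $\partial M$, the boundary operator $A=D_{\partial M}^{f_0^*TN}$ and $A_0=A\oplus -A$ depend only on $u$. The same therefore holds for the spaces $B_{\rm gAPS}(a)$. This also shows, as noted after the variational formula \eqref{eq:firstvariationenergy}, that $\partial_t\phi(0)\in B_{\rm gAPS}(a)$ for every admissible variation.

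The plan is then to apply Proposition \ref{p:examplesDhmapsZ2grading} with $\mathcal{G}=\omega_n^{\mathbb{C}}\cdot$. By Lemma \ref{l:GcommuteswithA0}, $\mathcal{G}\otimes\mathrm{id}$ preserves $B_{\rm gAPS}(a)$, and for $a\le 0$ the condition $B_{\rm gAPS}(a)$ is elliptic and symmetric. It therefore suffices to produce a nonzero $\phi_0\in\ker(D^{f_0})$ with $\phi_0{}_{|_{\partial M}}\in B_{\rm gAPS}(a)$. One of $\phi_{0,\pm}$ is then nonzero and gives a nontrivial uncoupled Dirac-harmonic map $(f_0,\phi_{0,\pm})$.

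The core step is to show that $\ker(D^{f_0})_{B(a)}\neq\{0\}$ once $a$ is sufficiently negative. I would use the relative index formula for elliptic boundary conditions from \cite{zbMATH06748684}. For $a<b\le 0$, $B(a)\subset B(b)$ has finite codimension $\dim H_{[a,b)}(A_0)$, so
\[\mathrm{ind}(D^{f_0})_{B(a)}=\mathrm{ind}(D^{f_0})_{B(0)}-\dim H_{[a,0)}(A_0).\]
Here $D^{f_0}$ is the full (ungraded) operator. The adjoint condition is $B(a)^{\rm ad}=B(a)\oplus H_{[a,-a]}(A_0)$, so the cokernel of $D_{B(a)}$ equals the kernel of $D$ under the larger condition $B(a)^{\rm ad}$. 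The index alone therefore only says that the cokernel grows. Since the ungraded index is not directly informative, I would work with the graded pieces $D_\pm$.

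On $\Sigma_+$ the relevant part of $A_0$ is $A$, and on $\Sigma_-$ it is $-A$. Lowering $a$ removes from the $+$ part the eigenvalues of $A$ in $[a,0)$, and from the $-$ part the eigenvalues of $-A$ in $[a,0)$, that is, eigenvalues of $A$ in $(0,-a]$. The cokernel of $(D_+)_{B(a)}$ is the kernel of $D_-$ with the adjoint condition $B(a)^{\rm ad}$, which enlarges as $a$ decreases. I would instead look directly at $\ker(D_-)$ with the enlarged condition, since any element of $\ker D$ with boundary value in $B(a)^{\rm ad}$ still yields a harmonic spinor. The cleaner route is the index of $(D_+)_{B(a)}$:
\[\mathrm{ind}(D_+)_{B(a)}=\mathrm{ind}(D_+)_{\rm APS}-\#\{\lambda\in\spec A\cap[a,0)\}.\]
This tends to $-\infty$ as $a\to-\infty$, so the cokernel of $(D_+)_{B(a)}$ is nontrivial. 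That cokernel is $\ker(D_-)$ with boundary condition in the adjoint space. By the symmetry between the two graded pieces (the $-$ part's condition for $B(a)$ is also spectral with respect to $-A$), I would redo the count with the roles of $\pm$ exchanged, that is, apply the formula to $D_-$. Combining the two counts, I would aim to show that for $a\le a_0$ one of $\dim\ker(D_\pm)_{B(a)}$ is at least the difference between the eigenvalue counts of $A$ in $[a,0)$ and in $(0,-a]$, corrected by the fixed APS indices.

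The main obstacle is this last sign/adjoint bookkeeping. I need to make sure the guaranteed nonzero kernel really lies in $B(a)$ itself, not merely in $B(a)^{\rm ad}$. If the direct count fails, I would use that $\ker D$ with condition $B(a)$ is monotone in $a$ and apply the gAPS variant of the APS formula, $\mathrm{ind}(D_+)_{B(a)}=\mathrm{ind}_{\rm APS}+\dim H_{(a,0)}$ with the appropriate sign convention. I would choose the sign of the spectral cut so that the index becomes positive for $a\ll 0$; this positivity is the idea attributed to Nistor. That choice of $a_0$ gives $\dim\ker(D_+)_{B(a)}\ge\mathrm{ind}>0$, and the proof then concludes as above.
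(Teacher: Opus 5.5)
Your proposal has a genuine gap at exactly the step you flag, and the fallback you describe cannot close it. The sign in $\mathrm{ind}(D_+^{f_0})_{B(a)}=\mathrm{ind}(D_+^{f_0})_{B(0)}-\dim L^2_{[a,0)}(A)$ is not a matter of convention. Since $B(a)=H^{\frac12}_{(-\infty,a)}(A_0)$ shrinks as $a$ decreases, both the index and the kernel can only go down, and no choice of spectral cut makes the index positive for $a\ll0$. In fact the kernel itself vanishes for $a\ll 0$. For $a'\le a\le0$ the spaces $K_a:=\{\phi\in\ker D^{f_0}\,:\,\phi_{|_{\partial M}}\in B(a)\}$ are finite-dimensional (by ellipticity) and nested, $K_{a'}\subset K_a$, so they are constant for $a$ below some $a_1$. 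Every element of this stable space has boundary trace in $\bigcap_{a\le0}H^{\frac12}_{(-\infty,a)}(A_0)=\{0\}$. Extending such an element by zero across $\partial M$ gives a weak, hence smooth, solution of a Dirac equation that vanishes on an open set, so unique continuation kills it on every component of $M$ meeting $\partial M$. Hence $\ker(D^{f_0})_{B(a)}=\{0\}$ for all $a\le a_1$. Your idea of combining the counts for $D_+$ and $D_-$ fails for the same reason: both graded indices tend to $-\infty$, and negative indices give no lower bound on any kernel.

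The paper's proof is exactly your first attempt. It takes the shift formula from \cite{zbMATH06748684}, obtains $\mathrm{ind}(D_+^{f_0})_{B(a)}<0$ for $a\le a_0$, and then directly asserts a nonzero $\Phi_0\in\ker D^{f_0}$ with $\Phi_0{}_{|_{\partial M}}\in B_{\rm gAPS}(a)$. That is precisely the inference you correctly identified as unjustified. A negative index only yields a nonzero cokernel, i.e.\ a nonzero element of $\ker D_-^{f_0}$ with boundary values in $H_{(-\infty,-a]}(A_0)$. This is the $\Sigma_-$-part of $B(a)^{\rm ad}=B(a)\oplus H_{[a,-a]}(A_0)$, a condition with nonnegative cut $-a$ that does not make $D^{f_0}$ symmetric and so lies outside the paper's variational setting. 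By the previous paragraph, better bookkeeping cannot rescue the statement as written. Take $M$ the flat unit disc, $N$ a flat torus and $u$ constant. Every Dirac-harmonic map is then uncoupled with $f_0$ harmonic, hence constant (lift to $\R^m$), and $D^{f_0}$ is $m$ copies of the untwisted Dirac operator. So there is no nonzero $\phi_0$ with $\phi_0{}_{|_{\partial M}}\in B_{\rm gAPS}(a)$ once $a\le a_1$. What the index argument actually proves is the existence of a nontrivial uncoupled solution $(f_0,\Psi)$ of the Dirac-harmonic map system with $\Psi$ subject to the adjoint (positive-cut) condition, not to $B_{\rm gAPS}(a)$.
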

\begin{proof}
Theorem \ref{t:Hamilton} already ensures the existence of a harmonic map $f_0\in [f]$, in particular $f_0{}_{|_{\partial M}}=u$.
Considering the generalized Atiyah-Patodi-Singer boundary condition $B=B_{\rm gAPS}(a)$ for some $a\in(-\infty,0)$, one has \cite[Example 4.3]{zbMATH06748684}
\[\mathrm{ind}((D_+^{f_0})_{B_{\rm gAPS}(0)})=\mathrm{ind}((D_+^{f_0})_{B_{\rm gAPS}(a)})+\dim(L_{[a,0)}^2(A_0))\]
in arbitrary even dimension $n$.
But since $\dim(L_{[a,0)}^2(A_0))\underset{a\to-\infty}{\longrightarrow}\infty$, there exists $a_0\in(-\infty,0]$ such that $\mathrm{ind}((D_+^{f_0})_{B_{\rm gAPS}(a)})<0$ for all $a\leq a_0$.
Then, for every $a\leq a_0$, the existence of a $\Phi_0\in \ker(D^{f_0})\setminus\{0\}$ satisfying $\Phi_0{}_{|_{\partial M}}\in B_{\rm gAPS}(a)$ follows.
Since, for the natural $\mathbb{Z}_2$-grading $\mathcal{G}$ given by the pointwise Clifford action of the complex volume form, $(\mathcal{G}\otimes\mathrm{id})(B_{\mathrm{gAPS}}(a))\subset B_{\mathrm{gAPS}}(a)$ is satisfied by Lemma \ref{l:GcommuteswithA0}, Proposition \ref{p:examplesDhmapsZ2grading} can be applied.
It can be deduced that both $(f_0,\Phi_0{}_\pm)$ are Dirac-harmonic maps satisfying $\Phi_0{}_\pm{}_{|_{\partial M}}\in B_{\mathrm{gAPS}}(a)$ and, because $\Phi_0{}_+\neq0$ or $\Phi_0{}_-\neq0$, one can take $\phi_0:=\Phi_0{}_+$ or $\Phi_0{}_-$ according to which one does not vanish identically.
This concludes the proof.
\end{proof}

\begin{Bem}\label{r:shiftotherBC}
Whether other boundary conditions for the spinor field can be handled in the same way using \cite[Theorem 4.4]{zbMATH06748684}, where the shift from the gAPS to an arbitrary elliptic boundary condition is used, remains presently open.
\end{Bem}

In dimension $2$ mod $4$, because of $\mathrm{ind}(D_+^f)=-\frac{1}{2}h(A)$ as shown above, we have the following general statement:
\begin{Thm}\label{t:n=2+4kAPSDirichlet}
Let $M^n$ be any compact spin manifold with nonempty boundary, dimension $n=2+4k\geq6$ and such that, for any any real twist vector bundle $E\lra \partial M$, there exists a Riemannian metric on $\partial M$ for which $\ker(D_{\partial M}^E)\neq\{0\}$ holds.
Let $(N^m,h)$ be any compact Riemannian manifold of nonpositive curvature and with either empty or convex boundary. 
Let $[f]$ be any relative homotopy class of maps $M\to N$ satisfying the Dirichlet boundary condition $f_{|_{\partial M}}=u$ for some given $u\in C^\infty(\partial M,N)$.
Then there exists a smooth Riemannian metric $g$ on $M$ for which a nontrivial uncoupled Dirac-harmonic map $(f_0,\phi_0)$ exists with $f_0\in[f]$ and $\phi_0{}_{|_{\partial M}}\in B_{\rm APS}$.
\end{Thm}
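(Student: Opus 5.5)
The plan is to combine three ingredients already available: Hamilton's existence result (Theorem \ref{t:Hamilton}), the reduced APS formula $\mathrm{ind}(D_+^{f})=-\frac{1}{2}h(A)$ valid in dimension $n\in 2+4\mathbb{Z}$ for real twist bundles, and the $\mathbb{Z}_2$-grading result (Proposition \ref{p:examplesDhmapsZ2grading}) together with Lemma \ref{l:GcommuteswithA0}.

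\textbf{Step 1: choose the boundary metric.} Apply the hypothesis on $\partial M$ to the real vector bundle $E:=u^*TN\to\partial M$. This gives a Riemannian metric $g_\partial$ on $\partial M$ with $\ker(D_{\partial M}^{u^*TN})\neq\{0\}$. The connection on $u^*TN$ is the pull-back of the Levi-Civita connection of $h$. Since $f_{|_{\partial M}}=u$ for every $f\in[f]$, the operator $A=D_{\partial M}^{f^*TN}$ depends only on $g_\partial$ and $u$. It does not depend on the interior of $f$ or of $g$, although one must check that the spin structure induced on $\partial M$ is the one used in the hypothesis.

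\textbf{Step 2: extend the metric to $M$.} Extend $g_\partial$ to a smooth metric $g$ on $M$, for instance by taking $g=dt^2+g_\partial$ on a collar $[0,\varepsilon)\times\partial M$ and gluing with an arbitrary metric in the interior via a partition of unity. A product collar is convenient because it matches the classical APS setting, although the paper notes that totally geodesic boundary is not needed. The induced metric on $\partial M$ is then $g_\partial$, so $h(A)\geq 1$.

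\textbf{Step 3: produce a twisted harmonic spinor.} By Theorem \ref{t:Hamilton}(1), there is a harmonic map $f_0\in[f]$ with respect to $g$. Since $n\in 2+4\mathbb{Z}$ and $f_0^*TN$ is real, the reduced formula gives $\mathrm{ind}((D_+^{f_0})_{B_{\rm APS}})=-\frac{1}{2}h(A)<0$. Hence the cokernel of $(D_+^{f_0})_{B_{\rm APS}}$ is nontrivial. The cokernel is identified with the kernel of the adjoint operator $D_-^{f_0}$ under the adjoint boundary condition $B_{\rm APS}^{\rm ad}=B_{\rm APS}\oplus\ker A_0$.

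\textbf{Main obstacle.} This adjoint condition is larger than $B_{\rm APS}$, so a cokernel element need not satisfy $B_{\rm APS}$ itself. I would handle this through the splitting $A_0=A\oplus -A$. Under this splitting, the adjoint problem for $D_-$ with the adjoint condition becomes the APS-type problem for $D_-$, whose boundary operator is $-A$ up to the Clifford identification $\nu\cdot$. Following the convention of \cite[Example 4.3]{zbMATH06748684}, this should yield an element $\Phi_0\in\ker(D^{f_0})\setminus\{0\}$ with $\Phi_0{}_{|_{\partial M}}\in B_{\rm APS}$. If that identification fails, I would fall back on Theorem \ref{t:existDhmapsgAPSasmall} with a slightly negative $a$, using the gap between $0$ and the first negative eigenvalue. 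I expect this bookkeeping to be the delicate point.

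\textbf{Step 4: conclude.} Dirichlet boundary data make the boundary space $B_{\rm APS}$ invariant under $\partial_t\phi(0)$, and Lemma \ref{l:GcommuteswithA0} gives $\mathcal{G}(B_{\rm APS})\subset B_{\rm APS}$. Proposition \ref{p:examplesDhmapsZ2grading} therefore shows that $(f_0,\Phi_0{}_\pm)$ are uncoupled Dirac-harmonic maps. Taking the nonzero one of the two chiral parts gives the required nontrivial pair $(f_0,\phi_0)$.
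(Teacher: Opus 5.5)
Your outline is exactly the paper's proof: a boundary metric from the hypothesis applied to $u^*TN$, an arbitrary extension, Hamilton's harmonic map $f_0$, $\mathrm{ind}(D_+^{f_0})=-\frac12h(A)<0$, and then Lemma \ref{l:GcommuteswithA0} with Proposition \ref{p:examplesDhmapsZ2grading}. The step you call the main obstacle is a genuine gap, and your proposal does not close it.

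\textbf{Why your fixes fail.} A negative index only gives a nonzero element of $\ker(D_-^{f_0})$ whose boundary value lies in $(B_{\rm APS,+})^{\rm ad}=(\nu\cdot H_{(-\infty,0)}(A_0|_{\Sigma_+}))^\perp=H_{(-\infty,0]}(A_0|_{\Sigma_-})$, using $\{A_0,\nu\cdot\}=0$.
\begin{enumerate}
\item Your relabeling does not remove the $\ker(A_0)$-component. Multiplication by $\nu\cdot$ only flips the sign of $A_0$, so the adjoint problem is the APS problem for $D_-$ enlarged by $\ker(A_0|_{\Sigma_-})$, not the APS problem itself.
\item The fallback gives nothing. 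For $a$ between the largest negative eigenvalue of $A_0$ and $0$, one has literally $B_{\rm gAPS}(a)=B_{\rm APS}$. For smaller $a$, $B_{\rm gAPS}(a)\subset B_{\rm APS}$, hence $\ker(D^{f_0})_{B_{\rm gAPS}(a)}\subset\ker(D^{f_0})_{B_{\rm APS}}$, so the kernel can only shrink and the growth of $-\mathrm{ind}$ is pure cokernel. The proof of Theorem \ref{t:existDhmapsgAPSasmall} rests on the same kernel/cokernel confusion, so it cannot be used as a black box here.
\end{enumerate}
The paper itself crosses this step only through the inequality $\dim\ker(D^f)\geq|\mathrm{ind}(D_+^f)|$ of Remark \ref{r:maphomotopyinvarianceindim2mod4}. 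For negative index that inequality bounds the adjoint kernel, not $\ker(D^f)_{B_{\rm APS}}$, so your worry applies to the paper's argument as well.

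\textbf{Index counting cannot close this step.} This holds for $n\equiv2\pmod 4$ with a real twist bundle. The real or quaternionic structure of $\Sigma M$ gives a parallel antilinear $J$ that (anti)commutes with Clifford multiplication by vectors. Since $n/2$ is odd, $J$ swaps $\Sigma_\pm M$. Moreover $J\otimes\mathrm{id}$ (anti)commutes with $D^{f_0}$ and commutes with $A_0$, so it preserves $B_{\rm APS}$. Hence $\dim\ker(D_+^{f_0})_{B_{\rm APS}}=\dim\ker(D_-^{f_0})_{B_{\rm APS}}$. It follows that $-\mathrm{ind}(D_+^{f_0})=\frac12h(A)$ is just the codimension of $\ker(D_-^{f_0})_{B_{\rm APS}}$ inside the kernel of $D_-^{f_0}$ with boundary values in $H_{(-\infty,0]}(A_0|_{\Sigma_-})$. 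It says nothing about whether $\ker(D^{f_0})_{B_{\rm APS}}\neq\{0\}$.

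\textbf{A concrete counterexample to the construction.} Take $M=Y\times[0,1]$ with $Y$ closed, a product metric, and $f_0(y,t)=u(y)$ with $u$ harmonic (e.g.\ constant). Write $\phi=\sum_k e^{-\mu_k t}\phi_k$ with $A_0\phi_k=\mu_k\phi_k$. The APS condition at $t=0$ kills all $\mu_k\geq0$. At $t=1$ the adapted operator is $-A_0$, and the condition kills all $\mu_k\leq0$. So $\ker(D^{f_0})_{B_{\rm APS}}=\{0\}$ even when $h(A)>0$. With an arbitrary extension of the boundary metric, the argument (yours and the paper's) therefore does not produce $\Phi_0$. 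A new input is needed, for instance a specific choice of interior metric.

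\textbf{What the index does give.} It provides a nonzero chiral element for the self-adjoint, $\mathcal{G}$-invariant condition $B_{\rm APS,+}\oplus(B_{\rm APS,+})^{\rm ad}$, and Proposition \ref{p:examplesDhmapsZ2grading} applies to it. That condition is not $B_{\rm APS}$, though.
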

\begin{proof}
By assumption, there exists a smooth Riemannian metric on $\partial M$ such that $\ker(D_{\partial M}^u)\neq\{0\}$.
Extend that metric to an arbitrary smooth Riemannian metric $g$ on $M$.
Because, as noticed above, the condition $(\mathcal{G}\otimes\mathrm{id})(B_{\mathrm{APS}})\subset B_{\mathrm{APS}}$ still holds, Proposition \ref{p:examplesDhmapsZ2grading} can be applied in the same way, which concludes the proof.
\end{proof}

Up to our knowledge, the existence of a Riemannian metric with non-vanishing kernel of the corresponding Dirac operator twisted with any fixed vector bundle has not been established in dimension $1+4k\geq5$, the only result for twisted Dirac operators being \cite[Theorem 4.1]{zbMATH01121705}, which is valid in dimension $3+4k$.
If that existence holds, then Theorem \ref{t:n=2+4kAPSDirichlet} is valid in dimension $2+4k\geq6$ without the assumption on $\partial M$.
For the $2$-dimensional case, we refer to Section \ref{ss:dimM=2} below.

\begin{Bem}\label{r:Ammannuncoupled}
In \cite[Theorem 1]{bernd}, Ammann observed that in the case of a compact domain,
Dirac-harmonic maps are most probably always uncoupled and pointed out that the same might be true in the case of a compact manifold with boundary.
\end{Bem}

\section{Explicit examples via twistor spinors}\label{s:firstexamplesviatwistorspinors}
In this section we derive a number of explicit solutions for the boundary value problem of Dirac-harmonic maps based on twistor spinors. As only few Riemannian manifolds admit twistor spinors we expect that this strategy can only produce a small number of explicit examples.

\subsection{Via twistor spinors}\label{ss:exampleswithtwistorspinors}
Given a smooth map $f\colon M\to N$, we make the ansatz 
\begin{align}
\label{dfn:psi-twistor}
\phi:=\sum_{j=1}^n e_j\cdot\psi\otimes f_*(e_j)+\varphi\otimes\tilde{\nu},
\end{align}
where $\psi,\varphi\in\Gamma(\Sigma M)$ are (untwisted) spinor fields, $\{e_j\}_{1\leq j\leq n}$ is any local o.n.b. of $TM$ and $\tilde{\nu}\in\Gamma(M,f^*TN)$ is any vector field standing orthogonally to $f_*(TM)=df(TM)$ everywhere on $M$.
It is well-known -- see e.g. \cite[Lemma 2.1]{zbMATH07075240} -- that
\begin{eqnarray*}
D^f\phi&=&\sum_{j=1}^n\left(\frac{2-n}{n}e_j\cdot D_M\psi-2P_{e_j}\psi\right)\otimes f_*(e_j)-\psi\otimes\mathrm{tr}_g(\nabla df)\\
& &{}+(D_M\varphi)\otimes\tilde{\nu}+\sum_{j=1}^ne_j\cdot\varphi\otimes\nabla_{e_j}^N\tilde{\nu},
\end{eqnarray*}
where $D_M\colon\Gamma(\Sigma M)\circlearrowleft$ is the standard spin Dirac operator of $(M^n,g)$.
It is also known -- see e.g. \cite[Cor. 2.3]{zbMATH07075240} -- that, when $n=2$, the map $f$ is harmonic, $\varphi=0$ and $\psi$ is a twistor-spinor on $(M,g)$, the pair $(f,\phi)$ is an uncoupled Dirac-harmonic map, where no boundary condition is required.
Thus the question is whether both $f$ and $\psi$ can be chosen such that $(f,\phi)$ satisfy  boundary conditions as described above.\\

We only consider the Neumann boundary condition for $f$, which comes from the following elementary question.
Namely when does $\phi$ inherit the boundary condition from $\psi$?
For instance, consider $\psi$ to satisfy the chirality boundary condition $\nu\cdot\mathcal{G}\psi=\psi$ along $\partial M$, where $\nu$ here denotes the inner unit normal along $\partial M$.
Taking $G:=\mathcal{G}\otimes\mathrm{id}_E$ and $\phi:=\sum_{j=1}^ne_j\cdot\psi\otimes f_*(e_j)$ as above with $\varphi=0$, we have, in a pointwise o.n.b. $\{e_j\}_{1\leq j\leq n}$ of $T_xM$, $x\in\partial M$, with $e_1=\nu_x$:
\begin{eqnarray*}
\nu\cdot G\phi&=&-\nu\cdot\nu\cdot\mathcal{G}\psi\otimes f_*(\nu)+\sum_{j=2}^ne_j\cdot\nu\cdot\mathcal{G}\psi\otimes f_*(e_j)\\
&=&-\nu\cdot\psi\otimes f_*(\nu)+\sum_{j=2}^ne_j\cdot\psi\otimes f_*(e_j),
\end{eqnarray*}
 $\nu\cdot G\phi=\phi$ if and only if $\nu\cdot\psi\otimes f_*(\nu)=0$, the identity
 $\nu\cdot G\phi=-\phi$ being even harder to realize.
When $\psi$ is assumed to vanish nowhere, the condition $\nu\cdot\psi\otimes f_*(\nu)=0$ is equivalent to $f_*(\nu)=0$,  hence we naturally first focus on the Neumann boundary condition for $f$.
The same argument shows that $i\nu\cdot\phi=-\phi$ as soon as $i\nu\cdot\psi=\psi$ and $f_*(\nu)=0$ along $\partial M$.\\

We first handle the chirality boundary condition for the spinor component.
Assume $\nu\cdot\mathcal{G}\psi=\psi$ to hold along $\partial M$, where $\psi$ is a twistor-spinor on $(M,g)$.
Then, for every $X$ tangent to $\partial M$, denoting by $W$ the Weingarten-endomorphism-field of $T\partial M$,
\begin{eqnarray*}
(\mathrm{id}-\nu\cdot\mathcal{G})\nabla_X\psi&=&\nabla_X\psi-\nu\cdot\nabla_X(\mathcal{G}\psi)\\
&=&\nabla_X\psi-\nabla_X(\nu\cdot\mathcal{G}\psi)+\nabla_X\nu\cdot\mathcal{G}\psi\\
&=&-WX\cdot\mathcal{G}\psi.
\end{eqnarray*}
Since $\psi$ is a twistor-spinor on $(M,g)$, we obtain
\begin{equation}\label{eq:twistorspinorchirality}
X\cdot(\mathrm{id}-\nu\cdot\mathcal{G})D\psi=nWX\cdot\mathcal{G}\psi
\end{equation}
for all $X$ tangent to $\partial M$.
In the particular case where $\psi$ is a nonzero $\alpha$-Killing spinor on $(M,g)$ for some $\alpha\in\R\cup i\R$, we have $D\psi=-n\alpha\psi$, 
\[X\cdot(\mathrm{id}-\nu\cdot\mathcal{G})D\psi=-n\alpha X\cdot(\mathrm{id}-\nu\cdot\mathcal{G})\psi=0\]
and therefore $W=0$ along $\partial M$ by \eqref{eq:twistorspinorchirality} i.e., the boundary $\partial M$ is totally geodesic in $M$.
Assuming now $\partial M$ to be totally geodesic in $M$ and to be connected, the chirality boundary condition $\nu\cdot\mathcal{G}\psi=\psi$ is satisfied as soon as it is at a point in $\partial M$.
This is due to the fact that $\nu\cdot\mathcal{G}$ is then parallel w.r.t. the connection $X\mapsto \nabla_X-\alpha X\cdot$ along $\partial M$ by the above computation.
To sum up, we obtain the following:

\begin{Prop}\label{p:explicitexdim2chiral}
Let $M^2$ be a compact oriented Riemannian surface with connected totally geodesic boundary and fixed spin structure.
Assume $f\colon M^2\to N$ to be a harmonic map satisfying the Neumann boundary condition and $\psi\in\Gamma(M,\Sigma M)$ to be an $\alpha$-Killing spinor for $\alpha\in\R\cup i\R$ on $M$ satisfying $\nu\cdot\mathcal{G}\psi=\psi$ at a point in $\partial M$.
Then $\displaystyle(f,\phi:=\sum_{j=1}^2e_j\cdot\psi\otimes f_*(e_j))$ is a Dirac-harmonic map with Neumann boundary condition $f_*(\nu)=0$ for $f$ and chirality boundary condition $\nu\cdot G\phi=\phi$ for $\phi$.
\end{Prop}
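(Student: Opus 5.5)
The plan has three parts: the interior equations, the boundary condition, and the justification that the pair really is a critical point.

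\emph{Interior equations.} With $n=2$ the factor $\frac{2-n}{n}$ vanishes, so the formula for $D^f\phi$ recalled above, taken with $\varphi=0$, reduces to
\[
D^f\phi=-2\sum_{j}P_{e_j}\psi\otimes f_*(e_j)-\psi\otimes\mathrm{tr}_g(\nabla df).
\]
An $\alpha$-Killing spinor is a twistor spinor, so $P\psi=0$, and $f$ is harmonic by assumption. Hence $D^f\phi=0$.

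For the second equation I would invoke \cite[Cor. 2.3]{zbMATH07075240}, which says the pair is uncoupled, i.e. $V_\phi=0$. This is a pointwise algebraic identity, so it holds regardless of the boundary. If one wants to see it directly: expand $\sum_j\langle (e_j\cdot\otimes R^N_{df(e_j),Y})\phi,\phi\rangle$. In dimension $2$ the Clifford products $e_j\cdot e_k\cdot e_l$ make each term real-part-free, or else the sum cancels by the antisymmetry of $R^N$ in its first two slots. Either way $\mathrm{tr}_g(\nabla df)=0=\frac12V_\phi$, so \eqref{eq:DhmapsonMwithboundary} holds.

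\emph{Boundary condition.} By assumption $f_*(\nu)=0$ along $\partial M$. The discussion preceding the proposition shows that $\psi$ satisfies $\nu\cdot\mathcal{G}\psi=\psi$ along all of $\partial M$. The reason is that, on a totally geodesic boundary, $\nu\cdot\mathcal{G}$ is parallel for the connection $X\mapsto\nabla_X-\alpha X\cdot$. Then $\psi$ and $\nu\cdot\mathcal{G}\psi$ are both parallel for that connection along $\partial M$ and agree at one point. Since $\partial M$ is connected, they agree everywhere on $\partial M$.

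To make this rigorous I would verify that $\nu\cdot\mathcal{G}$ commutes with $X\cdot$ for $X$ tangent to $\partial M$. Indeed, $X$ anticommutes with both $\nu$ and $\mathcal{G}$, so it commutes with their product. I would also check that $\nabla_X\nu=0$, which follows from $W=0$.

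With the boundary identity in hand, the computation preceding the proposition, done in a frame with $e_1=\nu$, gives
\[
\nu\cdot G\phi=-\nu\cdot\psi\otimes f_*(\nu)+e_2\cdot\psi\otimes f_*(e_2)=\phi,
\]
because $f_*(\nu)=0$. So $\phi$ satisfies the chirality boundary condition.

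\emph{Variational justification.} This is the only genuinely delicate point. I would note that $B_{\rm CHI}^+$ is self-adjoint, hence symmetric. Since it only involves the $\Sigma M$ factor, it is preserved by $\partial_t\phi(0)$ for any admissible variation, including Neumann variations, as remarked in Section \ref{ss:varformula}. Therefore the boundary terms in \eqref{eq:firstvariationenergy} vanish, and $(f,\phi)$ is a genuine critical point, not merely a solution of the equations.
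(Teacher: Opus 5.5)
Your proposal is correct and follows essentially the same route as the paper. It obtains the interior equations from the formula for $D^f\phi$ with $n=2$ together with \cite[Cor. 2.3]{zbMATH07075240}, propagates $\nu\cdot\mathcal{G}\psi=\psi$ along the connected totally geodesic boundary using the parallelism of $\nu\cdot\mathcal{G}$ for $X\mapsto\nabla_X-\alpha X\cdot$, and transfers the condition to $\phi$ via $f_*(\nu)=0$. Your closing variational paragraph likewise matches the remark in Section \ref{ss:varformula} that the chirality condition, acting only on the $\Sigma M$ factor, is compatible with Neumann variations.
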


Examples here include the following:
\begin{enumerate}
\item When $M^2=\mathbb{S}^2_+:=\{x=(x_1,x_2,x_3)\in\mathbb{S}^2\,|\,x_3\geq0\}$ with the standard round metric and canonical spin structure, whose boundary is connected and totally geodesic in $M$, together with a nonzero $\pm\frac{1}{2}$-Killing spinor $\psi$ satisfying $\nu\cdot\mathcal{G}\psi=\psi$ at a point in $\partial M$, which exists since the spinor bundle of $\mathbb{S}^2$ is trivialized by both $\frac{1}{2}$ and $-\frac{1}{2}$-Killing spinors; then, for any closed Riemannian manifold $N$,
there exists a harmonic map $f\colon M^2\to N$ satisfying the Neumann boundary condition \cite[Theorem 1.1]{MR1030856}
and therefore Proposition \ref{p:explicitexdim2chiral} applies.
\item When $M^2=\mathbb{T}^2_+:=\{(x_1,x_2)\in\mathbb{S}^1\times\mathbb{S}^1\,|\,x_1\in\mathbb{S}^1_+\}$ with flat metric and trivial spin structure, whose boundary is totally geodesic but disconnected, together with a nonzero parallel spinor $\psi$ satisfying $\nu\cdot\mathcal{G}\psi=\psi$ at a point in $\partial M$ and therefore on the whole of $\partial M$ since $\psi$, $\nu$ and $\mathcal{G}$ are parallel (constant) on $\partial M$, and again such a spinor field exists because the spinor bundle of $M$ is trivialized by parallel spinors; as before, Proposition \ref{p:explicitexdim2chiral} applies whenever $N$ is compact.
\end{enumerate}
Mind that 
no example can be produced taking a geodesic ball in the hyperbolic plane $\mathbb{H}^2$ -- its boundary is not totally geodesic -- nor a domain with connected totally geodesic boundary in some closed hyperbolic surface, which exists but carries no imaginary Killing spinor at all satisfying the chirality boundary condition.
Namely if $\psi$ is an $\frac{i\varepsilon}{2}$-Killing spinor on a compact surface $(M^2,g)$ with boundary for some $\varepsilon\in\{\pm1\}$ and $\nu\cdot\mathcal{G}\psi=\psi$ holds along $\partial M$, then Green's formula 
\[\left(D\psi,\psi\right)_{L^2(M)}-\left(\psi,D\psi\right)_{L^2(M)}=\left(\psi,\nu\cdot\psi\right)_{L^2(\partial M)}\]
yields $-2i\varepsilon\|\psi\|_{L^2(M)}^2=\left(\psi,\nu\cdot\psi\right)_{L^2(\partial M)}$.
But, because the chirality boundary condition makes the Dirac operator $D$ symmetric, the boundary term $\left(\psi,\nu\cdot\psi\right)_{L^2(\partial M)}$ actually vanishes,  forcing $\psi=0$ on $M$.
Therefore no nontrivial imaginary Killing spinor satisfying the chirality condition can exist on any compact spin surface with nonempty boundary.\\

The case of the MIT bag boundary condition is analogous though slightly different.
Note that, although the MIT bag boundary condition does not fit with our variational ansatz, as we saw in (\ref{item:bcmitbag}), Dirac-harmonic maps with such boundary condition for the spinor component can still be defined and studied in their own right as we noticed in Section \ref{ss:varformula}.
Assume the twistor spinor $\psi$ to satisfy $i\nu\cdot\psi=\psi$ along $\partial M$.
Recall that then $i\nu\cdot\phi=-\phi$ for $\phi$ as defined above and assuming the Neumann boundary condition for $f$.
The same computation as above shows that
\begin{equation}\label{eq:twistorspinorMIT}
X\cdot(\mathrm{id}+i\nu\cdot)D\psi=inWX\cdot\psi
\end{equation}
for all $X$ tangent to $\partial M$.
In the particular case where $\psi$ is a nonzero $\alpha$-Killing spinor on $(M,g)$ for some $\alpha\in\R\cup i\R$, we have $D\psi=-n\alpha\psi$, so that
\[X\cdot(\mathrm{id}+i\nu\cdot)D\psi=-n\alpha X\cdot(\mathrm{id}+i\nu\cdot)\psi=-2n\alpha X\cdot\psi,\]
resulting in $(WX-2i\alpha X)\cdot\psi=0$ for all $X$ tangent to $\partial M$.
This actually implies that $\alpha\in i\R$ since $WX\cdot\psi=2i\alpha X\cdot\psi$ yields, after taking real parts of the inner product with $X\cdot\psi$ on both sides, $\langle WX,X\rangle|\psi|^2=2|X|^2|\psi|^2\Re(i\alpha)=-2|X|^2|\psi|^2\Im(\alpha)$ so that, unless $\alpha=0$ and then $W=0$, necessarily $\alpha$ has to be purely imaginary.
In case $\alpha$ is purely imaginary, we can conclude that $W=2i\alpha\mathrm{id}_{T\partial M}$ that is, $\partial M$ is totally umbilic in $M$; and, as mentioned just above, when $\alpha=0$, we can conclude that $W=0$ that is, $\partial M$ is totally geodesic in $M$.
Conversely, if $\partial M$ is totally umbilic in $M$ with principal curvature $2i\alpha\in\R$ and $\partial M$ is connected, then $i\nu\cdot\psi=\psi$ holds on $\partial M$ as soon as it does at a point in $\partial M$.
This is due to $i\nu\cdot$ being parallel w.r.t. the connection $X\mapsto \nabla_X-\alpha X\cdot$ along $\partial M$.
This includes the case where $\alpha=0$.

\begin{Prop}\label{p:explicitexdim2MIT}
Let $M^2$ be a compact oriented Riemannian surface with connected totally umbilical boundary and fixed spin structure.
Assume $f\colon M^2\to N$ to be a harmonic map satisfying the Neumann boundary condition, $\psi\in\Gamma(M,\Sigma M)$ to be an $\alpha$-Killing spinor for $\alpha\in i\R$ on $M$ such that $2i\alpha$ is the principal curvature of the boundary and that $i\nu\cdot\psi=\psi$ holds at a point in $\partial M$.
Then $\displaystyle(f,\phi:=\sum_{j=1}^2e_j\cdot\psi\otimes f_*(e_j))$ is a Dirac-harmonic map with Neumann boundary condition $f_*(\nu)=0$ for $f$ and $\rm MIT$ boundary condition $i\nu\cdot\phi=-\phi$ for $\phi$.
\end{Prop}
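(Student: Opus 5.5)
The argument has three parts: show that $\phi$ solves the interior Dirac-harmonic system, show that $\psi$ satisfies the MIT condition along the whole of $\partial M$, and transfer that boundary condition from $\psi$ to $\phi$. No variational argument is involved, since the MIT condition lies outside our variational ansatz. By the discussion in Section \ref{ss:varformula}, it therefore suffices to check the system \eqref{eq:DhmapsonMwithboundary} pointwise together with the boundary conditions $f_*(\nu)=0$ and $i\nu\cdot\phi=-\phi$ on $\partial M$.

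\textbf{Interior equations.} We use the formula for $D^f\phi$ recalled from \cite[Lemma 2.1]{zbMATH07075240}, with $\varphi=0$ and $n=2$. The factor $\frac{2-n}{n}$ vanishes, and $P\psi=0$ because $\psi$ is Killing, hence twistor. Also $\mathrm{tr}_g(\nabla df)=0$ because $f$ is harmonic. So $D^f\phi=0$. This is exactly \cite[Cor. 2.3]{zbMATH07075240}, which also gives $V_\phi=0$, so the pair is an uncoupled solution of \eqref{eq:DhmapsonMwithboundary} in the interior. The Neumann condition on $f$ holds by assumption.

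\textbf{Propagation of the MIT condition for $\psi$ along $\partial M$.} Take $X$ tangent to $\partial M$ and write $\nabla_X\nu=-WX$, with the sign convention used in the chirality computation above. Since $\nabla_X\psi=\alpha X\cdot\psi$, we get
\[
\nabla_X(i\nu\cdot\psi)-\alpha X\cdot(i\nu\cdot\psi)=-iWX\cdot\psi+i\nu\cdot\alpha X\cdot\psi-i\alpha X\cdot\nu\cdot\psi .
\]
The last two terms combine to $-2i\alpha X\cdot\nu\cdot\psi$, using $X\cdot\nu=-\nu\cdot X$. So the defect of $i\nu\cdot$ from commuting with the connection $\nabla_X-\alpha X\cdot$ is $-iWX\cdot-2i\alpha X\cdot\nu\cdot$. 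The task is to show that this defect annihilates $\ker(\mathrm{Id}-i\nu\cdot)$, using $W=2i\alpha\,\mathrm{id}$. This is where the consistency of sign conventions matters: inner normal, sign of $W$, and $X\cdot X=-|X|^2$. On $\psi$ with $i\nu\cdot\psi=\psi$ we have $\nu\cdot\psi=-i\psi$, so $-2i\alpha X\cdot\nu\cdot\psi=-2\alpha X\cdot\psi$. Also $-iWX\cdot\psi=-i(2i\alpha)X\cdot\psi=2\alpha X\cdot\psi$. The two contributions cancel, so the subbundle $\ker(\mathrm{Id}-i\nu\cdot)$ is invariant under the connection $X\mapsto\nabla_X-\alpha X\cdot$ along $\partial M$. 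Equivalently, $\eta:=(\mathrm{Id}-i\nu\cdot)\psi$ satisfies a first-order linear ODE along $\partial M$ and vanishes at one point. Since $\partial M$ is connected (a circle), it vanishes identically, so $i\nu\cdot\psi=\psi$ on all of $\partial M$.

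\textbf{Transfer to $\phi$ and the main obstacle.} At $x\in\partial M$ choose $e_1=\nu$ and $e_2$ tangent. The Neumann condition kills the $\nu$-term, so $\phi=e_2\cdot\psi\otimes f_*(e_2)$. Because $\nu$ anticommutes with $e_2$,
\[
i\nu\cdot\phi=-e_2\cdot i\nu\cdot\psi\otimes f_*(e_2)=-\phi .
\]
This is the announced MIT condition and concludes the proof. The main obstacle is the propagation step, specifically the sign bookkeeping there. If the signs did not cancel with the convention $\nabla_X\nu=-WX$, I would redo it with $+WX$ and check that it matches the convention implicit in \eqref{eq:twistorspinorMIT}.
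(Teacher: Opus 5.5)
Your proposal is correct and follows the paper's route. The interior system comes from \cite[Cor. 2.3]{zbMATH07075240}; $i\nu\cdot\psi=\psi$ is propagated along the connected boundary using the connection $X\mapsto\nabla_X-\alpha X\cdot$ and $W=2i\alpha\,\mathrm{id}$, with the same convention $\nabla_X\nu=-WX$ as the paper; and the condition is transferred to $\phi$ via $f_*(\nu)=0$ and $\nu\cdot e_2\cdot=-e_2\cdot\nu\cdot$. Your propagation step, stated as invariance of the subbundle $\ker(\mathrm{Id}-i\nu\cdot)$, is actually more precise than the paper's claim that $i\nu\cdot$ itself is parallel, because $[\nabla_X-\alpha X\cdot,\,i\nu\cdot]=2\alpha X\cdot(\mathrm{Id}-i\nu\cdot)$ vanishes only on that eigenbundle when $\alpha\neq0$.
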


This contains the case where $\alpha=0$, the only difference being that $\partial M$ must be totally geodesic in $M$.
Examples here include the following:
\begin{enumerate}
\item When $M^2=\mathbb{T}^2_+:=\{(x_1,x_2)\in\mathbb{S}^1\times\mathbb{S}^1\,|\,x_1\in\mathbb{S}^1_+\}$ with flat metric and trivial spin structure, whose boundary is totally geodesic but disconnected, together with a nonzero parallel spinor $\psi$ satisfying $i\nu\cdot\psi=\psi$ at a point in $\partial M$ and therefore on the whole of $\partial M$ since both $\psi$ and $\nu$ are parallel (constant) on $\partial M$ (and such a spinor field exists because the spinor bundle of $M$ is trivialized by parallel spinors); as above, Proposition \ref{p:explicitexdim2MIT} applies whenever $N$ is compact.
\item When $M^2$ is a geodesic ball in the hyperbolic plane $\mathbb{H}^2$ of constant curvature $-1$, with radius $r=\mathrm{arctanh}(1)$ so as for the boundary mean curvature to be $1$, carrying the standard metric and canonical spin structure induced from $\mathbb{H}^2$, together with a nonzero $-\frac{i}{2}$-Killing spinor $\psi$ satisfying $i\nu\cdot\psi=\psi$ at a point in $\partial M$ (and such a spinor field exists because the spinor bundle of $M$ is trivialized by $-\frac{i}{2}$-Killing spinors); as above, Proposition \ref{p:explicitexdim2MIT} applies whenever $N$ is compact.
\end{enumerate}

In dimension $n\geq3$, examples of the above form are harder to realize.
Recall from \cite[Thm. 1.1]{zbMATH07075240} that, if $f\colon M^n\lra N^{n+1}$ is a \emph{totally umbilical isometric immersion} from a connected $n(\geq3)$-dimensional spin manifold into an oriented \emph{spaceform of constant curvature $c\in\R$} and $\phi:=\sum_{j=1}^n e_j\cdot\psi\otimes f_*(e_j)+\varphi\otimes\tilde{\nu}$ as above, where $\tilde{\nu}$ is a unit normal to $f(M)$ in $N$ (not to be confused with the inner unit normal $\nu$ to $\partial M$ in $M$), the pair $(f,\phi)$ is a Dirac-harmonic map if and only if $H=-c\Re(\langle\psi,\varphi\rangle)$, $D_M\varphi=nH\psi$, $D_M\psi=-\frac{nH}{n-2}\varphi$ and $P\psi=0$ on $M$; and, in case $M$ is closed, this is equivalent to $f$ to be totally geodesic, $D_M\varphi=0$, $\nabla^{\Sigma M}\psi=0$ and $c\Re(\langle\psi,\varphi\rangle)=0$ on $M$.
Here $H$ denotes the mean curvature of the immersion $f$ w.r.t. $\tilde{\nu}$.
In our situation, we would like, as above, to assume $\varphi=0$, in which case $f$ must be chosen to be totally geodesic and $\psi$ parallel, and $\psi$ to fulfill some pointwise boundary condition along $\partial M$.
But, unlike the above situation where $n=2$, because here we have assumed $f$ to be an immersion, the Neumann boundary condition $f_*(\nu)=0$ for $f$ is impossible to realize and therefore neither the chirality nor the $\rm MIT$ boundary condition for $\psi$ can be transferred to $\phi$.
To conclude, no example of the form $(f,\phi)$ above can be produced when $n=\dim(M)\geq3$.

\section{Solutions from the index theorem}\label{s:solfromindexthm}

\subsection{Cylinders over closed spin manifolds}\label{ss:cylinderconstruction}
In this section, we consider manifolds of the form $M^n:=\check{M}^{n-1}\times[0,1]$, where $\check{M}^{n-1}$ is a closed spin manifold.
Given any Riemannian manifold $(N^m,h)$ satisfying the assumptions of Theorem \ref{t:Hamilton}, we want to show the existence of a Riemannian metric on $M$ for which a Dirac-harmonic map to $N$ exists.

\begin{Thm}\label{t:Dhmapsoncylinders}
Let $M^n:=\check{M}^{n-1}\times[0,1]$ for some closed spin manifold $\check{M}^{n-1}$ and $u\in C^\infty(\check{M}, N)$ , where $(N^m,h)$ is any compact Riemannian manifold with nonpositive curvature and either empty or convex boundary.
If $n\in4\mathbb{Z}$, $\hat{A}(T\check{M})=0$ and $\mathrm{ch}(TN)=m=\dim(N)$,
then for any relative homotopy class $[f]$ of maps $M\to N$ satisfying $f_{|_{\check{M}\times\{0\}}}=f_{|_{\check{M}\times\{1\}}}=u$, there exists a Riemannian metric $\check{g}$ on $\check{M}$ such that, for the product metric $g:=\check{g}\oplus dt^2$ on $M$, there exists a nontrivial uncoupled Dirac-harmonic map $(f_0,\phi_0)$ with $f_0\in [f]$ and $\phi_0{}_{|_{\partial M}}\in B_{\rm APS}$.
\end{Thm}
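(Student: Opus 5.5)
The plan follows the proof of Theorem~\ref{t:existDhmapsgAPSasmall}. We take a harmonic map from Theorem~\ref{t:Hamilton}, use the APS index formula \eqref{t:APSindexthm} to force a nonzero twisted harmonic spinor satisfying $B_{\rm APS}$, and then split it with the volume-form grading via Lemma~\ref{l:GcommuteswithA0} and Proposition~\ref{p:examplesDhmapsZ2grading}.

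First, for any metric $g=\check g\oplus dt^2$, Theorem~\ref{t:Hamilton}(1) gives a harmonic map $f_0\in[f]$ with $f_0=u$ on both boundary components. The map $f_0$ depends on $\check g$, but the boundary data $f_0{}_{|_{\partial M}}$ do not. Hence the boundary operator $A=D_{\partial M}^{f_0}$ is determined by $\check g$ and $u$ alone.

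Next we evaluate the right-hand side of \eqref{t:APSindexthm} for $D_+^{f_0}$. The metric is a product near the boundary, so the transgression term vanishes. In the interior term, the form $\hat A(TM)\cdot\mathrm{ch}(f_0^*TN)$ pulls back along the projection $M\to\check M$ up to the twist. The hypothesis $\mathrm{ch}(TN)=m$ makes $\mathrm{ch}(f_0^*TN)$ reduce cohomologically to $m$. We would rather argue at the form level via Chern--Weil: the relative class is $m\,\hat A(TM)$, and $\hat A(TM)=\mathrm{pr}^*\hat A(T\check M)$ because $TM=\mathrm{pr}^*T\check M\oplus\R$ with product metric. Its top-degree part integrates to zero over $\check M\times[0,1]$, since it is pulled back from the $(n-1)$-dimensional $\check M$ and has no $dt$-component. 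The hypothesis $\hat A(T\check M)=0$ guarantees that the curvature correction from the twist, a transgression $d$-exact term, integrates only against boundary data. Those boundary data agree at $t=0$ and $t=1$ up to orientation, so they cancel.

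For the spectral term, $\partial M=\check M\sqcup(-\check M)$ with the same twist $u^*TN$ on both components. Reversing orientation changes the sign of the boundary Dirac operator, so $\eta(A)=\eta(D_{\check M}^{u})-\eta(D_{\check M}^u)=0$, while $h(A)=2\dim\ker D_{\check M}^{u}$. Therefore
\[\mathrm{ind}(D_+^{f_0})=-\dim\ker(D^{u}_{\check M}).\]
Here $\dim\check M=n-1\in 3+4\mathbb Z$, and \cite[Theorem 4.1]{zbMATH01121705} gives a metric $\check g$ on $\check M$ for which $\ker(D_{\check M}^{u})\neq\{0\}$, twisted by the fixed bundle $u^*TN$ with its pulled-back connection. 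With this $\check g$ the index is negative, so $D^{f_0}$ has a nonzero kernel element $\Phi_0$ satisfying $B_{\rm APS}$. Since the Dirichlet condition fixes the twist bundle along $\partial M$, the variational requirement $\partial_t\phi(0)\in B$ holds. Lemma~\ref{l:GcommuteswithA0} with $a=0$ shows that $\mathcal G$ preserves $B_{\rm APS}$. Proposition~\ref{p:examplesDhmapsZ2grading} then yields that $(f_0,\Phi_0{}_\pm)$ is uncoupled Dirac-harmonic, and we pick the nonzero component.

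The main obstacle is the vanishing of the interior characteristic integral. The map $f_0$ is not a product map, so $\mathrm{ch}(f_0^*TN)$ is not obviously constant at the form level; only its class relative to the boundary is controlled. I would first try to show that the full bulk-plus-transgression contribution is a homotopy invariant of $f$ relative to the boundary and of the metric in the interior. I would then deform to a metric and connection for which the form is the product one, using $\mathrm{ch}(TN)=m$ in cohomology together with $\hat A(T\check M)=0$. If that deformation argument fails, a fallback is to note that the integer $\mathrm{ind}$ differs from $-\dim\ker D^u_{\check M}$ by a quantity independent of $\check g$. We would then use the freedom in \cite[Theorem 4.1]{zbMATH01121705} to make the kernel dimension large, which still forces a negative index.
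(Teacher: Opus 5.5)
Your outline matches the paper's proof step by step: Hamilton's harmonic map, the APS formula with vanishing transgression, $\eta(A)=0$ and $h(A)=2\dim\ker D^u_{\check M}$ from the two oppositely oriented copies of $\check M$, B\"ar's metric, and then Lemma \ref{l:GcommuteswithA0} and Proposition \ref{p:examplesDhmapsZ2grading}.

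\textbf{The gap.} The step ``the index is negative, so $D^{f_0}$ has a nonzero kernel element $\Phi_0$ satisfying $B_{\rm APS}$'' does not follow, and the paper's proof makes the same tacit inference. A negative index of $(D_+^{f_0})_{B_{\rm APS}}$ only gives a nonzero element of its cokernel. That is some $\Psi\in\ker D_-^{f_0}$ satisfying the \emph{adjoint} condition $B_{\rm APS}^{\rm ad}=B_{\rm APS}\oplus\ker A_0$. Here the whole index comes from $h(A)\neq0$, which is exactly the case where $B_{\rm APS}$ fails to be self-adjoint.

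This is not a technicality. Take $f_0=u\circ\mathrm{pr}$, for example $u$ constant with $[f]$ the class of the constant map, or $u$ harmonic for $\check g$. Set $\mathcal{A}:=\sum_{j<n}e_j\cdot\partial_t\cdot\nabla_{e_j}$. Then $\partial_t\cdot D^{f_0}=-(\nabla_{\partial_t}+\mathcal{A})$, so the $\lambda$-eigencomponent of a harmonic twisted spinor evolves like $e^{-\lambda t}$. The APS condition at $t=0$ forces spectral support in $(-\infty,0)$. At $t=1$ the inner normal is $-\partial_t$ and $A_0=-\mathcal{A}$, so the condition forces support in $(0,\infty)$. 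Hence $\ker(D^{f_0})_{B_{\rm APS}}=\{0\}$, although the index is $-\dim\ker D^u_{\check M}<0$. The cokernel is spanned by the $t$-independent sections of $\Sigma_-M\otimes f_0^*TN$ with values in $\ker\mathcal{A}$.

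What your argument actually yields is a nontrivial uncoupled Dirac-harmonic map $(f_0,\Psi)$ with $\Psi\in\Gamma(\Sigma_-M\otimes f_0^*TN)$ and $\Psi_{|_{\partial M}}\in B_{\rm APS}\oplus\ker A_0$. Proposition \ref{p:examplesDhmapsZ2grading} still works for this elliptic, $\mathcal{G}$-invariant condition, since the grading is what kills the boundary term. Obtaining $B_{\rm APS}$ itself would need an extra argument that the index cannot supply.

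\textbf{The interior term.} You single this out as the main obstacle, but the Chern--Weil argument you already sketch settles it. This is more than the paper gives, since it only asserts the vanishing. Write $\mathrm{ch}(\nabla^{TN})=m+d\beta$ on $N$. Because the $\hat A$-form is closed, $\hat A(\nabla^{TM})\wedge f_0^*d\beta=d\bigl(\hat A(\nabla^{TM})\wedge f_0^*\beta\bigr)$. By Stokes this becomes $\int_{\check M}\hat A(\nabla^{\check g})\wedge u^*\beta$ counted once with each orientation, hence $0$. The remaining term $m\int_M\hat A(\nabla^{TM})$ vanishes because $\hat A(\nabla^{TM})=\mathrm{pr}^*\hat A(\nabla^{\check g})$ has no degree-$n$ component.

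So it is closedness of the $\hat A$-form, not the hypothesis $\hat A(T\check M)=0$, that pushes the twist correction to the boundary. The vanishing holds for every $f_0$ in the relative class. Neither the deformation argument nor the fallback via large kernels is needed, and the fallback would not repair the boundary-condition issue above anyway.
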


\begin{proof}
Since $n$ is even, the index formula \eqref{t:APSindexthm} can be applied and yields, for every smooth map $f\colon M\to N$,
\[\mathrm{ind}(D_+^f)=(\hat{A}(TM)\cdot\mathrm{ch}(f^*TN))[M]+T(\hat{A}(TM)\cdot\mathrm{ch}(f^*TN))[\partial M]-\frac{1}{2}(h(A)+\eta(A)).\]
Assuming $\hat{A}(T\check{M})=0$ and $\mathrm{ch}(TN)=m$,
the term  $(\hat{A}(TM)\cdot\mathrm{ch}(f^*TN))[M]$ must vanish.
Moreover, since $\partial M=(\check{M}\times\{0\})\cup(\check{M}\times\{1\})$ is totally geodesic, the term $T(\hat{A}(TM)\cdot\mathrm{ch}(f^*TN))[\partial M]$ must vanish as well.
Now, since we assume that $f$ satisfies $f_{|_{\check{M}\times\{0\}}}=f_{|_{\check{M}\times\{1\}}}=u$, the operator $A_{|_{\check{M}\times\{0\}}}$ identifies with $-A_{|_{\check{M}\times\{1\}}}$ because of the change of orientation between $\check{M}\times\{0\}$ and $\check{M}\times\{1\}$.
Therefore, $\eta(A)=\eta(D_{\check{M}}^u)+\eta(-D_{\check{M}}^u)=0$ and $\mathrm{ind}(D_+^f)=-h(D_{\check{M}}^u)$.
It remains to notice that, since by assumption $n\in 4\mathbb{Z}$, there exists by \cite[Theorem 4.1]{zbMATH01121705} a Riemannian metric $\check{g}$ on $\check{M}$ such that $\ker(D_{\check{M}}^u)\neq\{0\}$.
Now, let $g:=\check{g}\oplus dt^2$
and apply Proposition \ref{p:examplesDhmapsZ2grading} to conclude the proof.
\end{proof}

Examples of application of Theorem \ref{t:Dhmapsoncylinders} include the case where $\check{M}$ is an arbitrary parallelizable closed oriented manifold, in particular any torus of arbitrary dimension $3+4k$ or any closed oriented $3$-manifold, the case where $\check{M}$ admits a flat Riemannian metric; and $(N^m,h)$ is any closed hyperbolic manifold or product of closed hyperbolic manifolds or flat $m$-dimensional torus.

\begin{Bem}\label{rem:torusastarget}
When $N$ is the flat $m$-dimensional torus and $f_0\colon M\to N$ is any harmonic map, the equation $\displaystyle\mathrm{tr}_g(\nabla df_0)=\frac{1}{2}V_{\phi_0}$ is obviously satisfied because of $V_{\phi_0}=0$, whatever $\phi_0$ is, since the curvature of $N$ vanishes.
Still the first equation from \eqref{eq:DhmapsonMwithboundary} does not coincide with $m$ copies of the Dirac equation on $M$ since the twist-bundle $f_0^*TN\to M$ has no reason to be trivial, unless $M$ is simply-connected.
Therefore the claim of Theorem \ref{t:Dhmapsoncylinders} in that case does not lead to trivial examples as described above unless $M$ is assumed to be simply-connected.
\end{Bem}

\subsection{Extension of Dirac-harmonic maps by parallel spinors}\label{ss:extparspinors}
In this section, we assume $(f,\phi)$ to be a Dirac-harmonic map, where $(M^n,g)$ is a compact Riemannian spin manifold with boundary, $f\colon M\to N$ is a smooth map satisfying the Dirichlet boundary condition $f_{|_{\partial M}}=u$ for some $u\in C^\infty(\partial M,N)$, $(N^m,h)$ is any Riemannian manifold and $\phi\in\ker(D^f)\setminus\{0\}$ satisfies the gAPS$(a)$ boundary condition along $\partial M$ for some $a\leq0$.
We want to extend that Dirac-harmonic map to the product Riemannian spin manifold $(\hat{M},\hat{g}):=(M^n\times P^p,g\oplus g_P)$, where $(P^p,g_P)$ is an arbitrary Riemannian spin manifold admitting a nonzero parallel spinor $\psi$ and the spin structure is the product one.
Recall that simply-connected complete Riemannian manifolds admitting nonzero parallel spinors are classified via their holonomy \cite{zbMATH04127996}.
Up to rescaling $\psi$ by a nonzero constant, it may be assumed that $|\psi|=1$ on $P$.
For technical reasons, if $p$ is even, we assume $\psi\in\Gamma(\Sigma_+P)$.
Note that this is always possible up to changing the orientation of $P$.\\

Let $\hat{f},\hat{u}$ and $\hat{\phi}$ be defined as follows.
The map $\hat{f}$ is the canonical extension of $f$ onto $\hat{M}$ and so is $\hat{u}$ that of $u$ onto $\partial\hat{M}$ that is, $\hat{f}(x,y):=f(x)$ for all $(x,y)\in\hat{M}$ and $\hat{u}(x,y):=u(x)$ for all $(x,y)\in\partial\hat{M}=\partial M\times P$.
Identify $(\Sigma M\otimes f^*TM)\boxtimes\Sigma P\cong(\Sigma M\boxtimes\Sigma P)\otimes \hat{f}^*TN$ as well as 
\begin{align*}
\displaystyle\Sigma\hat{M}\cong\left\{\begin{array}{ll}\Sigma M\boxtimes\Sigma P&\textrm{if }n\textrm{ or }p\textrm{ is even}\\(\Sigma M\boxtimes\Sigma P)\oplus(\Sigma M\boxtimes\Sigma P)&\textrm{otherwise}\end{array}\right.    
\end{align*}
Here, for vector bundles $E\to M$ and $F\to P$, the vector bundle $E\boxtimes F\to M\times P$ is defined by $\pi_M^*E\otimes\pi_P^*F\to M\times P$, for the two canonical projections $\pi_M\colon M\times P\to M$ and $\pi_P\colon M\times P\to P$.
In particular, the vector bundle $(\Sigma M\otimes f^*TM)\boxtimes\Sigma P$ is identified with a vector subbundle of $\Sigma\hat{M}\otimes\hat{f}^*TN$.
Then the spinor component $\hat{\phi}$ is defined by $\displaystyle\hat{\phi}:=\phi\boxtimes\psi$.

\begin{Thm}\label{t:Dhmapsextendedparallelspinors}
When $\psi\in\Gamma(\Sigma P)$ is a parallel spinor of unit length satisfying $\psi\in\Gamma(\Sigma_+P)$ in case $p$ is even, the pair $(\hat{f},\hat{\phi})$ defined above is a Dirac-harmonic map on $(\hat{M},\hat{g})$ satisfying $\hat{f}_{|_{\partial\hat{M}}}=\hat{u}$ as well as $\hat{\phi}_{|_{\partial\hat{M}}}\in B_{\rm gAPS}(a)$.
\end{Thm}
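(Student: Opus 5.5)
We verify the three requirements of the statement one after the other: the Dirichlet condition for $\hat f$, the two Euler--Lagrange equations \eqref{eq:DhmapsonMwithboundary} on $(\hat M,\hat g)$, and the boundary condition for $\hat\phi$.

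The Dirichlet condition is immediate: for $(x,y)\in\partial M\times P$ we have $\hat f(x,y)=f(x)=u(x)=\hat u(x,y)$.

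For the equations, we work in a local orthonormal frame $\{e_1,\dots,e_n,\epsilon_1,\dots,\epsilon_p\}$ adapted to the product. The Clifford multiplication on $\Sigma M\boxtimes\Sigma P$ is realized in the standard way: if $n$ is even, $e_j\cdot(\sigma\otimes\tau)=(e_j\cdot\sigma)\otimes\tau$ and $\epsilon_k\cdot(\sigma\otimes\tau)=(\omega_n^{\C}\cdot\sigma)\otimes(\epsilon_k\cdot\tau)$; if $p$ is even, the volume element of $P$ is used instead; if both are odd, one passes to the doubled bundle.

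\emph{Dirac equation.} With this identification, $D^{\hat f}=D^f\boxtimes\mathrm{id}+(\text{grading})\boxtimes D_P$ up to the appropriate twist. Since $\hat f$ is constant along $P$, the pulled-back connection on $\hat f^*TN$ has no $P$-components. Because $\nabla\psi=0$, we get $D_P\psi=0$, and hence $D^{\hat f}\hat\phi=(D^f\phi)\boxtimes\psi=0$.

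\emph{Map equation.} First, $\mathrm{tr}_{\hat g}(\nabla d\hat f)=\mathrm{tr}_g(\nabla df)\circ\pi_M$, since $d\hat f(\epsilon_k)=0$ and the metric is a product. Next, in $V_{\hat\phi}$ the $P$-directions drop out because $d\hat f(\epsilon_k)=0$. For the $M$-directions,
\[
\langle (e_j\cdot\otimes R^N)\hat\phi,\hat\phi\rangle=\langle (e_j\cdot\otimes R^N)\phi,\phi\rangle\,|\psi|^2 .
\]
When $p$ is odd or $n$ is even, $e_j$ acts on the first factor only. When $p$ is even, $e_j$ acts as $e_j\cdot\otimes\omega_p^{\C}$ with $\omega_p^{\C}\psi=\psi$, because $\psi\in\Gamma(\Sigma_+P)$. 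This is precisely the technical reason for the chirality assumption on $\psi$. Using $|\psi|=1$, we obtain $V_{\hat\phi}=V_\phi\circ\pi_M$, so the second equation for $(\hat f,\hat\phi)$ follows from that for $(f,\phi)$.

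\emph{Boundary condition.} This is the main obstacle. The inner normal of $\partial\hat M=\partial M\times P$ is $\nu$, and the boundary-adapted operator $\hat A_0=\sum(\cdot\,\nu\cdot)\nabla$ splits into the $\partial M$-part, which is $A_0\boxtimes\mathrm{id}$ (possibly composed with a grading), plus a $P$-part built from $\epsilon_k\cdot\nu\cdot\nabla_{\epsilon_k}$.

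The $P$-part annihilates $\hat\phi$, since $\nabla_{\epsilon_k}\hat\phi=\phi\boxtimes\nabla_{\epsilon_k}\psi=0$ (again using that $\hat f$ is constant in $P$). Consequently, for every eigenvector $\phi_\lambda$ of $A_0$ with eigenvalue $\lambda$, $\phi_\lambda\boxtimes\psi$ is an eigenvector of $\hat A_0$ with the same eigenvalue $\lambda$. In the case where $p$ is even, this step again uses $\omega_p^{\C}\psi=\psi$, so that no sign appears; when $n$ is even, the factor $\nu\cdot e_j$ is even and the grading $\omega_n$ cancels.

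Expanding $\phi_{|_{\partial M}}=\sum_{\lambda<a}\phi_\lambda$, we get $\hat\phi_{|_{\partial\hat M}}=\sum_{\lambda<a}\phi_\lambda\boxtimes\psi$. Convergence in $H^{1/2}$ follows from the same eigenvalue relation: the $\hat A_0$-Sobolev norms of $\phi_\lambda\boxtimes\psi$ equal the $A_0$-norms of $\phi_\lambda$ times $\|\psi\|$, which we handle on compact $P$, or locally with a cut-off otherwise. This places $\hat\phi_{|_{\partial\hat M}}$ in $H^{1/2}_{(-\infty,a)}(\hat A_0)=B_{\rm gAPS}(a)$.

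The careful check of the Clifford sign conventions in the four parity cases is where we expect the work to concentrate.
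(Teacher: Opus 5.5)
Your proposal is correct and follows essentially the same route as the paper. You realize $(\Sigma M\otimes f^*TN)\boxtimes\Sigma P$ inside $\Sigma\hat M\otimes\hat f^*TN$ so that $e_j$ acts on $\hat\phi$ only through the $\Sigma M$ factor (via $\omega_p^{\C}\psi=\psi$ for $p$ even). You then deduce $D^{\hat f}\hat\phi=(D^f\phi)\boxtimes\psi=0$, get $\mathrm{tr}_{\hat g}(\hat\nabla d\hat f)=\mathrm{tr}_g(\nabla df)$ and $V_{\hat\phi}=V_\phi$ from $|\psi|=1$, and conclude the boundary condition from the fact that $\varphi\boxtimes\psi$ is a $\lambda$-eigenvector of $\hat A_0$ whenever $\varphi$ is one of $A_0$.

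Two minor remarks. First, chirality of $\psi$ is not needed for that eigenvalue relation, since only $(\omega_p^{\C})^2=\mathrm{id}$ enters there. Second, your cut-off suggestion for noncompact $P$ would not work as stated, because a cut-off destroys the eigenvector property; the paper, however, tacitly ignores that case as well.
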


\begin{proof}
The above identification $\displaystyle\Sigma\hat{M}\cong\left\{\begin{array}{ll}\Sigma M\boxtimes\Sigma P&\textrm{if }n\textrm{ or }p\textrm{ is even}\\(\Sigma M\boxtimes\Sigma P)\oplus(\Sigma M\boxtimes\Sigma P)&\textrm{otherwise}\end{array}\right.$ can be chosen so that, for all $X\in TM$, the identity 
\[ X\hat{\cdot}\simeq\left\{\begin{array}{ll} 
X\cdot\otimes\mathrm{Id}_{\Sigma_+ P}&\textrm{if }p\textrm{ is even}\\ X\cdot\otimes\mathrm{Id}_{\Sigma P}&\textrm{if }p\textrm{ is odd and }n\textrm{ is even}\\ X\cdot\otimes\mathrm{Id}_{\Sigma P}\oplus -X\cdot\otimes\mathrm{Id}_{\Sigma P}&\textrm{if }p\textrm{ and }n\textrm{ are odd}\end{array}\right.\]
holds.
Here, $\hat{\cdot}$ and $\cdot$ denote the Clifford multiplication on $(\hat{M},\hat{g})$ and $(M,g)$ respectively, see e.g. \cite[Sec. 2]{zbMATH01911945}, which is itself based on \cite[Sec. 1]{zbMATH01220814}.
Denoting the Levi-Civita covariant derivative of $(\hat{M},\hat{g})$ twisted by $\hat{f}^*TN$ by $\hat{\nabla}^{\hat{f}}$ and by $(e_j)_{1\leq j\leq n}$ and $(e_j)_{n+1\leq j\leq n+p}$ local o.n.b. of $TM$ and $TP$ respectively, we obtain:
\begin{eqnarray*}
D^{\hat{f}}\hat{\phi}&=&\sum_{j=1}^{n+p}e_j\hat{\cdot}\hat{\nabla}_{e_j}^{\hat{f}}\hat{\phi}\\
&=&\sum_{j=1}^{n+p}e_j\hat{\cdot}\hat{\nabla}_{e_j}^{\hat{f}}(\phi\boxtimes\psi)\\
&=&\sum_{j=1}^ne_j\hat{\cdot}\hat{\nabla}_{e_j}^{\hat{f}}\hat{\phi}+\sum_{j=n+1}^{n+p}e_j\hat{\cdot}\hat{\nabla}_{e_j}^{\hat{f}}\hat{\phi}\\
&=&\sum_{j=1}^ne_j\hat{\cdot}(\nabla_{e_j}^f\phi\boxtimes\psi+\phi\boxtimes\underbrace{\nabla_{e_j}^P\psi}_{0})\\
&&+\sum_{j=n+1}^{n+p}e_j\hat{\cdot}(\underbrace{\nabla_{e_j}^f\phi}_{0}\boxtimes\psi+\phi\boxtimes\underbrace{\nabla_{e_j}^P\psi}_{0})\\
&=&\sum_{j=1}^ne_j\hat{\cdot}(\nabla_{e_j}^f\phi\boxtimes\psi)\\
&=&(\sum_{j=1}^ne_j\cdot\nabla_{e_j}^f\phi)\boxtimes\psi\\
&=&D^f\phi\boxtimes\psi\\
&=&0.
\end{eqnarray*}
Note that the only place where $\nabla^P\psi=0$ is used is when computing $\nabla_{e_j}^P\psi$ for $n+1\leq j\leq n+p$, all other vanishing terms do because of the structure of Riemannian product of $(\hat{M},\hat{g})$.\\
Moreover, since $d\hat{f}(X,Y)=df(X)$ for every tangent vector $(X,Y)\in TM\oplus TP$ and $\hat{g}$ is a product metric, it is clear that $\mathrm{tr}_{\hat{g}}(\hat{\nabla}d\hat{f})=\mathrm{tr}_g(\nabla df)$.
For the same reason, the vector field $V_{\hat{\phi}}$ can be identified with $V_\phi$: for all $Y$ tangent to $N$, one has, using that $|\psi|=1$ on $P$,
\begin{eqnarray*}
h(V_{\hat{\phi}},Y)&=&\sum_{j=1}^{n+p}\langle e_j\hat{\cdot}\otimes R_{d\hat{f}(e_j),Y}^N\hat{\phi},\hat{\phi}\rangle\\
&=&\sum_{j=1}^n\langle e_j\hat{\cdot}\otimes R_{df(e_j),Y}^N\hat{\phi},\hat{\phi}\rangle\\
&=&\sum_{j=1}^n\langle (e_j\hat{\cdot}\otimes R_{df(e_j),Y}^N)(\phi\boxtimes\psi),\phi\boxtimes\psi\rangle\\
&=&\sum_{j=1}^n\langle (e_j\cdot\otimes R_{df(e_j),Y}^N)(\phi\boxtimes\psi),\phi\boxtimes\psi\rangle\\
&=&\sum_{j=1}^n\langle (e_j\cdot\otimes R_{df(e_j),Y}^N\phi)\boxtimes\psi,\phi\boxtimes\psi\rangle\\
&=&\sum_{j=1}^n\langle e_j\cdot\otimes R_{df(e_j),Y}^N\phi,\phi\rangle\cdot|\psi|^2\\
&=&h(V_\phi,Y),
\end{eqnarray*}
such that $V_{\hat{\phi}}=V_\phi$ holds pointwise.
As a consequence, the second equation of \eqref{eq:DhmapsonMwithboundary} holds as well.\\
It remains to look at the boundary condition for $\hat{\phi}$.
Since $\psi$ is parallel on $P$, for every real $\lambda$ and $\varphi\in\ker(A_0-\lambda\mathrm{Id})$, the spinor field $\varphi\boxtimes\psi$ must lie in $\ker(\hat{A}_0-\lambda\mathrm{Id})$.
As a consequence, if $\phi_{|_{\partial M}}\in B_{\rm gaPS}(a)$, then $\hat{\phi}=\phi\boxtimes\psi\in B_{\rm gAPS}(a)$ -- mind that each boundary space refers to a different boundary operator.
This concludes the proof.
\end{proof}

\begin{Bem}\label{r:extensionparspinorclosedM}
\noindent\begin{enumerate}
\item This construction also applies to the closed setting, boundary conditions being ignored: for any Dirac-harmonic map $(f,\phi)$ defined on a closed Riemannian spin manifold $(M^n,g)$, for any Riemannian spin manifold $(P,g_P)$ and for any parallel spinor of unit length $\psi\in\Gamma(\Sigma P)$ satisfying $\psi\in\Gamma(\Sigma_+P)$ in case $p$ is even, the pair $(\hat{f},\hat{\phi})$ defined above is a Dirac-harmonic map on $(M\times P,g\oplus g_P)$.
Similar ideas have been used in the construction of solutions to the Dirac-Yang-Mills system in \cite[Section 6]{amym}.

\item As in \cite[Sec. 2.3]{MR3070562}, examples of Dirac-harmonic maps to product manifolds can be constructed in the same way.
Namely, for any compact Riemannian spin manifold $M$ with nonempty boundary and any Riemannian manifolds $N_1$ and $N_2$, if $(f_j,\phi_j)_{j=1,2}$ is a pair of Dirac-harmonic maps with $f_j\colon M\to N_j$ satisfying, for both $j\in\{1,2\}$, that $\phi_j{}_{|_{\partial M}}\in B_j$ for some boundary condition $B_j$ as well as either $df_j(\nu_j)=0$ or $f_j{}_{|_{\partial M}}=u_j\in C^\infty(\partial M,N_j)$ along $\partial M$, then $(f,\phi):=(f_1\times f_2,\phi_1\oplus \phi_2)$ is a Dirac-harmonic map $M\to N_1\times N_2$ satisfying the Dirichlet resp. Neumann boundary condition for $f$ if both $f_1,f_2$ do as well as $\phi_{|_{\partial M}}\in B_1\oplus B_2$.
The proof goes {\sl verbatim} as in \cite[Sec. 2.3]{MR3070562}.

That construction has some applications in physics: In string theory, from which the energy functional for Dirac-harmonic maps originates, one often considers target manifolds which are a product of a four-dimensional manifold modeling our universe and a six-dimensional Calabi-Yau manifold. This seems to be a consistent way of connecting string theory to the Standard Model of elementary particle physics and our four-dimensional universe.
For more details on the applications of Calabi-Yau manifolds in string theory we refer to \cite[Section 9]{MR2285203}.
\end{enumerate}
\end{Bem}

\subsection{Existence result on surfaces}\label{ss:dimM=2}
In this section, we assume $n=2$ and want to show that, under some conditions on $M$, $N$ and $f\colon M\to N$, there exists at least one spin structure on $M$ such that $\mathrm{ind}(D_+^f)\neq0$.\\

Recall that compact surfaces with boundary are topologically classified by their Euler characteristic as well as the number of their boundary components, which then are circles.
More precisely, for every nonnegative integer $g$ and positive integer $r$, there exists, up to diffeomorphism, a unique orientable surface $M$ of genus $g$ and with $r$ boundary circles i.e., the boundary $\partial M$ of $M$ consists of exactly $r$ circles.
Note that orientability is anyway needed for the spin condition to be fulfilled.
Actually such a surface $M$ is obtained by cutting $r$ disjoint open discs out of the closed orientable surface $\overline{M}_g$ of genus $g$.
Denote from now on that surface $M$ with $M_{g,r}$ and recall that, since $H^1(M_{g,r},\mathbb{Z}_2)=\mathbb{Z}_2^{2g+r-1}$, there are exactly $2^{2g+r-1}$ spin structures on $M_{g,r}$.
\begin{claim}\label{claim:1}
\emph{For any spin structure on $M_{g,r}$, the number of connected components of the boundary onto which the restricted spin structure of $M_{g,r}$ is the trivial (unbounding) one is even.}
\end{claim}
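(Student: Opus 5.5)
Our plan is to reduce the claim to a count of the holonomy of the spin structure around the boundary circles. A spin structure on the oriented surface $M_{g,r}$ can be encoded by a function on loops: for every smoothly embedded closed curve $\gamma$ we record whether the spin structure, restricted to $\gamma$ with the framing induced by the tangent vector of $\gamma$ together with the normal, is the bounding or the non-bounding (trivial) spin structure on $S^1$. We encode this as $q(\gamma)\in\mathbb{Z}_2$, with the convention that $q(\gamma)=1$ exactly for the trivial (unbounding, i.e.\ Lie/periodic) spin structure. By Johnson's theory, $q$ is then a quadratic refinement of the intersection form on $H_1(M_{g,r},\mathbb{Z}_2)$:
\[q(a+b)=q(a)+q(b)+a\cdot b,\]
where for embedded curves the convention gives $q(\gamma)=1$ for the circle bounding a small disc.

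We apply this to the boundary circles $c_1,\dots,c_r$. They are pairwise disjoint and disjoint from everything else after a slight push inward, so all mutual intersection numbers $c_i\cdot c_j$ vanish. Moreover $\sum_i c_i=0$ in $H_1(M_{g,r},\mathbb{Z}_2)$, since together they bound $M_{g,r}$.

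The core computation is Johnson's formula for a disjoint union of embedded curves: for a multicurve $\gamma_1\sqcup\dots\sqcup\gamma_k$ one has $q(\sum_i\gamma_i)=\sum_i q(\gamma_i)+k \pmod 2$ in the normalisation where a single curve counts itself. Applying it to the boundary multicurve, which is null-homologous, gives
\[0=q(0)=\sum_{i=1}^r q(c_i)+r\pmod 2.\]
In other words, the number of boundary components carrying the \emph{bounding} structure is even, and hence the number carrying the trivial structure has the parity of $r$.

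This needs to be reconciled with the statement. A direct check is to cap off the boundary: cap $M_{g,r}$ with discs over those boundary circles where the spin structure is bounding. Only those circles admit an extension of the spin structure across a disc. The result is a surface whose remaining boundary circles all carry the trivial structure, and it is bounded by them, so it forms a spin cobordism between that union of trivial circles and the empty set. Since $\Omega_1^{\rm spin}=\mathbb{Z}_2$ is generated by the trivial (Lie) circle, the number of such circles must be even. This cobordism argument is the route we would actually write up, because it avoids the sign conventions.

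The main obstacle is the identification of the induced boundary spin structure, using the inward normal as in Section \ref{ss:ellipticbc}, with the standard conventions. In particular one must confirm that the disc induces the bounding structure on its boundary, and that the Lie circle represents the nonzero class of $\Omega_1^{\rm spin}$. We would verify this on the disc $M_{0,1}$ and on the annulus $M_{0,2}$, where $2^{1}$ spin structures give either both boundary circles trivial or both bounding. This agrees with the evenness in the claim, and with the parity check for $r=1$, where the single circle must be bounding.
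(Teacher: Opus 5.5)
Your cobordism argument is correct, and it takes a different route from the paper's.

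\textbf{Comparison with the paper.} The paper argues by hand. It first supposes that exactly one boundary circle is Lie and fills the remaining (bounding) circles with discs, obtaining $M_{g,1}'$. Since $M_{g,1}'$ has $2^{2g}$ spin structures, all restricted from $\overline{M}_g$, its boundary circle must be bounding, which is a contradiction. It then cuts off a pair of pants along two Lie circles, describes the four spin structures on $M_{0,3}$ explicitly, and inducts on $r$.

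You instead note that $M_{g,r}$ is a spin null-cobordism of $\partial M_{g,r}$. Hence $t\,[S^1_{\mathrm{Lie}}]=0$ in $\Omega_1^{\mathrm{spin}}\cong\mathbb{Z}_2$, where $t$ is the number of Lie circles, so $t$ is even. Two small remarks on your write-up:
\begin{itemize}
\item Capping off the bounding circles is harmless but unnecessary, since bounding circles already represent $0$.
\item The inward/outward normal issue you flag is not a real obstacle. Whether a circle is bounding or Lie does not depend on it, and $[S^1_{\mathrm{Lie}}]$ has order $2$ anyway.
\end{itemize}
Your route is one line and needs no decomposition or induction. However, all of its content sits in the cited fact $[S^1_{\mathrm{Lie}}]\neq0$. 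The paper's counting step is essentially a self-contained proof of exactly that fact. The same fact can also be read off from the mod-$2$ index $\alpha$ recalled before Claim 3, which is a spin-cobordism invariant equal to $1$ on the Lie circle.

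\textbf{The Johnson paragraph is wrong.} It should be dropped or repaired. The clash you noticed is not a matter of normal conventions. Its conclusion, that the number of bounding circles is even, is false. For $r=1$ the unique circle is bounding, as you say yourself at the end. Likewise, the spin structure on $M_{0,3}$ induced from $\mathbb{S}^2$ has three bounding circles.

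The source of the error is this. Both formulas you quote hold only if $q(\gamma_i)$ is replaced by $\xi(\vec\gamma_i)$, the spin class evaluated on the tangential lift, which equals $1$ exactly for the \emph{bounding} structure. The two formulas are:
\begin{itemize}
\item the value $1$ on the boundary of a small disc;
\item $q(\sum_i\gamma_i)=\sum_i q(\gamma_i)+k$.
\end{itemize}
With your stated convention ($q(\gamma)=1$ for the Lie structure), one has $q(\gamma)=\xi(\vec\gamma)+1$. So $q$ vanishes on a small disc boundary and is simply additive on disjoint curves. Your own identity $q(a+b)=q(a)+q(b)+a\cdot b$, together with $c_i\cdot c_j=0$, already forces this additivity, so the $+k$ term contradicts it.

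Corrected, the computation gives $0=q(0)=\sum_i q(c_i)$. This says the number of Lie circles is even, which is exactly the claim.
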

\begin{proof}
If there is at least one connected component of $\partial M_{g,r}$ onto which the restricted spin structure of $M_{g,r}$ is the trivial one, then it must be first shown that there exists at least one further connected component of $\partial M_{g,r}$ enjoying the same property.
Namely if that were not the case, i.e. if the induced spin structure onto all other boundary circles of $M_{g,r}$ were nontrivial, then all those spin structures could be filled in: one could attach $r-1$ discs $\mathbb{D}^2$ to those boundary circles and extend the boundary spin structure onto each disc, that is, the boundary spin structure would still be the induced spin structure from the disc onto its boundary.
Then one would obtain a new orientable surface $M_{g,1}'$ with same genus but only one boundary circle.
\begin{center}
\includegraphics[scale=0.3]{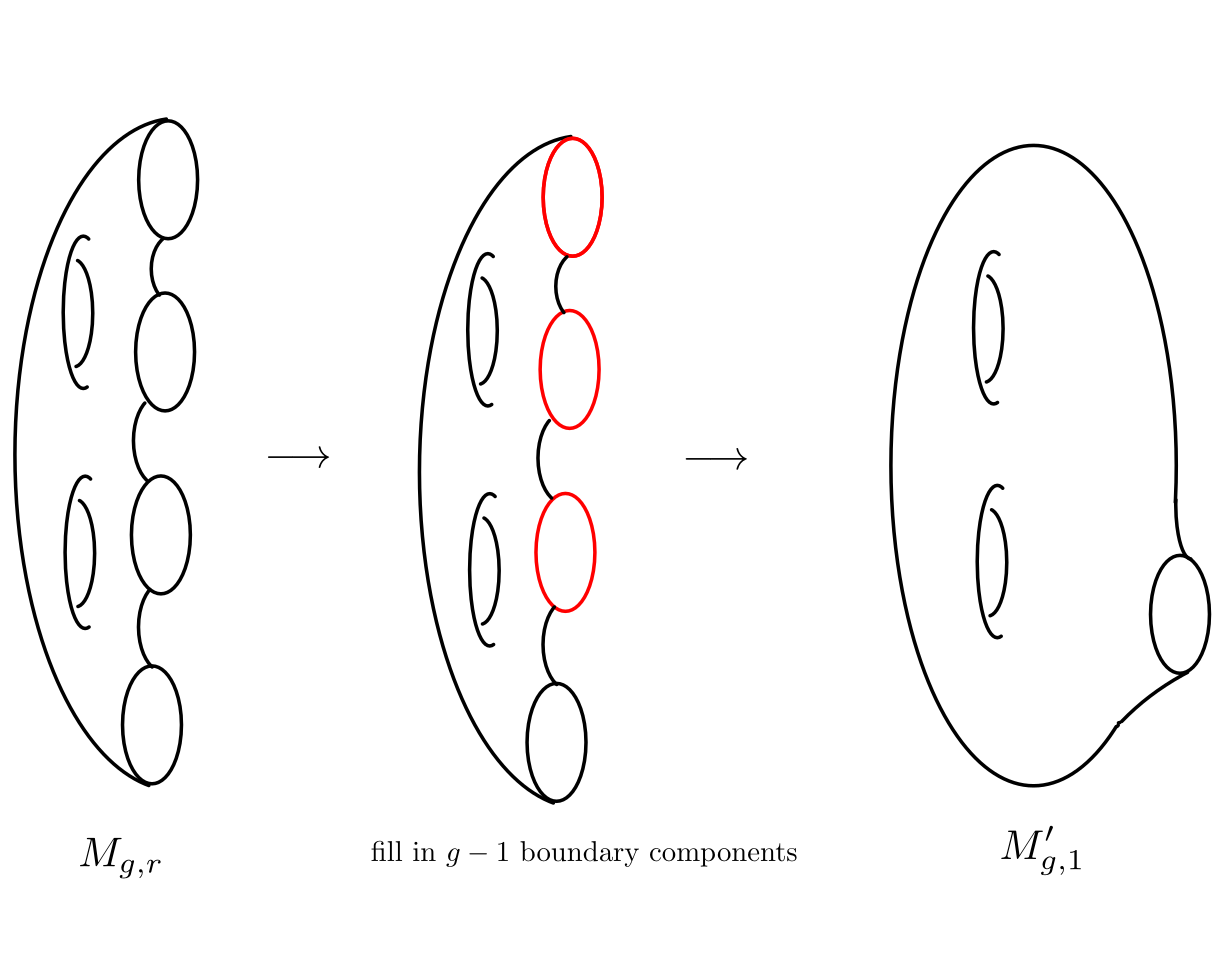}\\    
\end{center}
That surface has exactly $2^{2g+1-1}=2^{2g}$ spin structures.
But that is exactly the number of spin structures of the \emph{closed} orientable surface $\overline{M}_{g}$ of genus $g$: the inclusion map $M_{g,1}'\to\overline{M}_g$ pulls any spin structure from $\overline{M}_g$ to a spin structure on $M_{g,1}'$ and this map is injective, and therefore also surjective for obvious cardinality reasons.
It means that, whatever the spin structure on $M_{g,1}'$ is, it has to coincide with the restriction from a spin structure of $\overline{M}_{g}$ onto $M_{g,1}'=\overline{M}_g\setminus\mathbb{D}^2$.
In turn, this implies that the restricted spin structure along the unique boundary circle of $M_{g,1}'$ must be bounding, i.e. nontrivial.
This is a contradiction and shows that at least one further boundary circle must carry the trivial spin structure as induced spin structure.\\
Fixing now those two boundary circles with trivial induced spin structure, we cut a pair of pants/trousers out of $M_{g,r}$ that is, we write $M_{g,r}=M_{0,2}\sharp M_{g,r-2}$ as the connected sum of the surface of genus $0$ and $2$ ends -- which are the two circles with trivial spin structure we fixed above -- with that of genus $g$ and $r-2$ ends.
The surface $M_{0,3}$, which is $M_{0,2}\setminus\mathbb{D}^2$, carries exactly $2^{0+3-1}=4$ spin structures, which can be described as follows: apart from the spin structure coming from $\mathbb{S}^2$, which restricts to the nontrivial one onto each of the $3$ boundary circles (for those circles bound discs in $\mathbb{S}^2$), all further spin structures restrict as the trivial spin structure onto exactly $2$ of the $3$ boundary circles.
Namely starting from a cylinder $\mathbb{S}^1\times[0,1]=M_{0,2}$ with product trivial spin structure, another cylinder may be glued to it but where the spin structure is the product of the nontrivial one on $\mathbb{S}^1$ with the unique one on $[0,1]$.
The reason for the choice of nontrivial spin structure on the second cylinder is that, when making the connected sum $M_{0,2}\sharp M_{0,1}$, since the second cylinder is attached along an embedded disc in the first one, the spin structure along the boundary circle must be the nontrivial (bounding) one in order for the spin structures of both cylinders to be compatible with each other.
Therefore, we obtain $M_{0,2}\sharp M_{0,1}=M_{0,3}$ with induced spin structure from both cylinders and that spin structure restricts as the trivial one onto exactly $2$ of the $3$ boundary circles.
Since there are $3$ possible choices of $2$ ends among $3$, we obtain all spin structures on $M_{0,3}$.
In other words, we have just proved the claim directly for $M_{0,3}$.
Coming back to the connected sum $M_{g,r}=M_{0,2}\sharp M_{g,r-2}$, since the first glued-in $M_{0,3}$-factor carries the trivial spin structure along the two fixed boundary circles fixed above, the induced spin structure along the third circle -- along which $M_{g,r-1}$ is glued -- must be the nontrivial one.
It means that, along the same end on $M_{g,r-1}$ used for the connected sum, the spin structure must be nontrivial and therefore may be filled in to obtain $M_{g,r-2}$ with well-defined induced spin structure.\\
\begin{center}
\includegraphics[scale=0.27]{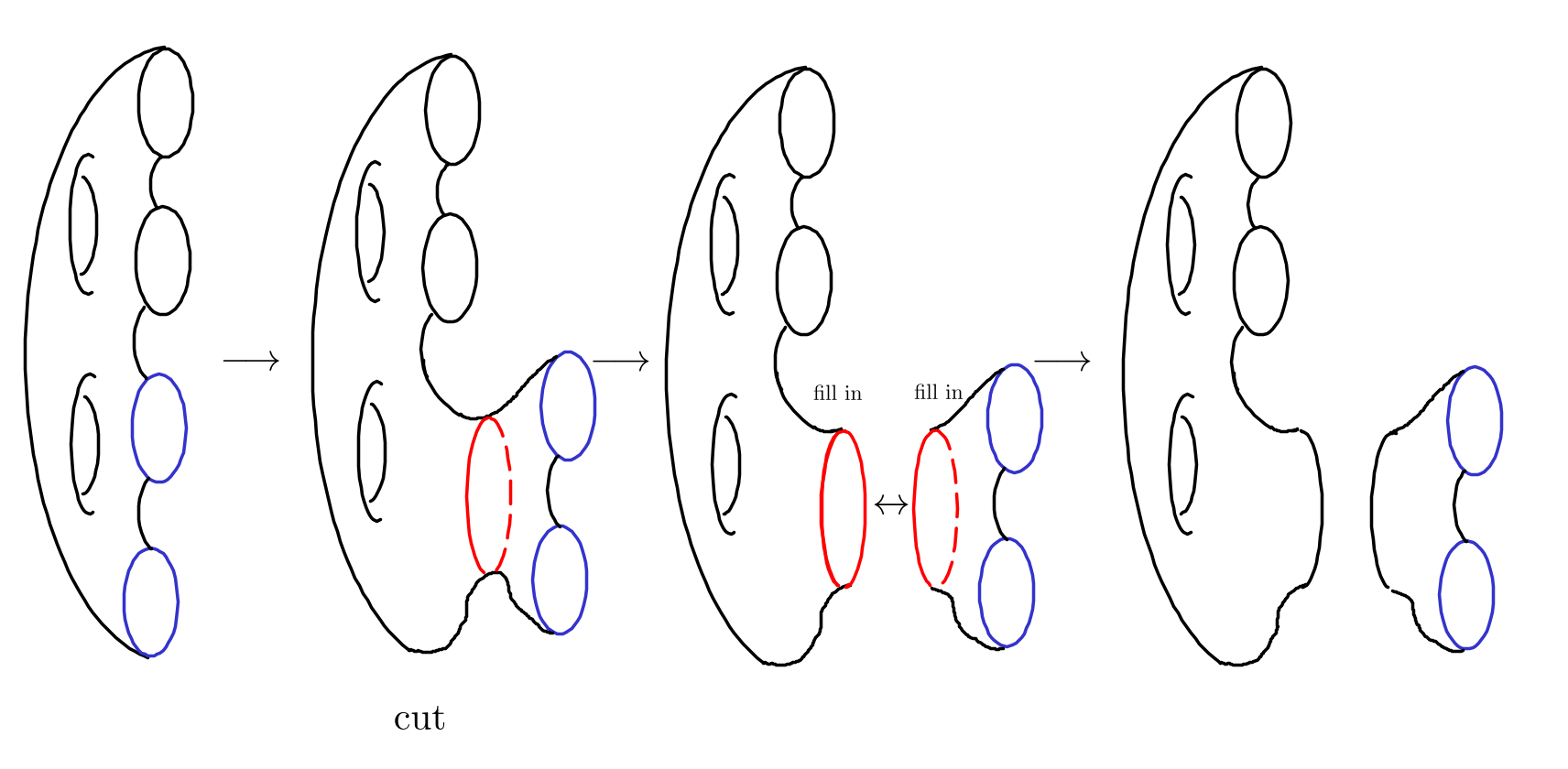}\\        
\end{center}
Now by induction on $r$, the number of boundary circles of $M_{g,r-2}$ onto which the restricted spin structure is trivial is even and therefore so is that of $M_{g,r}$, which is the same number plus $2$.
This proves the claim.
\end{proof}
\begin{claim}\label{claim:2}
\emph{If $r\geq2$, then there exists at least one spin structure on $M_{g,r}$ for which at least two boundary circles carry the trivial spin structure as induced spin structure.}\end{claim}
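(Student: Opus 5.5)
I will construct the required spin structure explicitly, using the same cut-and-glue description of spin structures on surfaces that appears in the proof of Claim \ref{claim:1}. Write $M_{g,r}$, for $r\geq2$, as the union of a pair of pants $M_{0,3}$ and the surface $M_{g,r-1}$, glued along a single circle $C$. Here two of the three boundary circles of $M_{0,3}$, call them $C_1$ and $C_2$, are boundary circles of $M_{g,r}$, and the third circle is $C$. (If $g=0$ and $r=2$, the surface $M_{0,1}$ is a disc, and gluing it to $M_{0,3}$ simply caps off $C$ to give the cylinder.)

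On $M_{0,3}$ I take one of the three spin structures described in the proof of Claim \ref{claim:1} that restrict to the trivial (non-bounding) spin structure on exactly two boundary circles. I choose the one for which these two circles are $C_1$ and $C_2$, so that $C$ carries the nontrivial (bounding) spin structure. Concretely, this is the product trivial spin structure on the cylinder $\mathbb{S}^1\times[0,1]$ between $C_1$ and $C_2$, with a disc removed in its interior.

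On $M_{g,r-1}$ I take a spin structure restricting to the bounding spin structure on the circle $C$. Such a structure always exists: cap off $C$ by a disc to obtain $M_{g,r-2}$ (for $r=2$ this is the closed surface $\overline{M}_g$), choose any spin structure on it, and restrict it to the complement of the disc. The restriction to the circle $C$ is then the one induced from the disc, which is bounding.

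Two spin structures on surfaces that induce the same spin structure on a common boundary circle can be glued along that circle. Since $C$ carries the bounding structure from both sides, gluing yields a spin structure on $M_{g,r}$ whose restrictions to $C_1$ and $C_2$ are trivial. This proves Claim \ref{claim:2}.

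The only delicate point is orientation and identification when gluing. With the conventions used in the proof of Claim \ref{claim:1}, the bounding structure on a circle is preserved under reversing orientation, so the gluing along $C$ is compatible. The triviality on $C_1$ and $C_2$ is unaffected by the gluing, since these circles lie away from $C$.

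An alternative purely cohomological check is the following. Spin structures form an affine space over $H^1(M_{g,r};\mathbb{Z}_2)$. The restriction map to $\bigoplus_{i=1}^r H^1(C_i;\mathbb{Z}_2)$ has image equal to the subspace of vectors with even coordinate sum, which follows from the exact sequence of the pair $(M_{g,r},\partial M_{g,r})$. Hence, by adjusting a given structure, one can flip the boundary spin structures on any two prescribed circles. In particular, starting from a structure that is bounding on all circles (restricted from $\mathbb{S}^2$ or $\overline{M}_g$), flipping $C_1$ and $C_2$ gives the claim.
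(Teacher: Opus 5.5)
Your proof is correct, but it takes a different route from the paper's.

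\textbf{The paper's argument.} It counts. For $r\geq2$ one has $2^{2g+r-1}>2^{2g}$, so some spin structure on $M_{g,r}$ is not restricted from $\overline{M}_g$. Such a structure cannot be bounding on every boundary circle, since otherwise all discs could be filled in. So at least one circle carries the trivial structure, and the parity statement of Claim~\ref{claim:1} upgrades ``at least one'' to ``at least two''.

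\textbf{Your gluing argument.} You build the structure directly from two pieces: the pair of pants, taken as a cylinder with the product trivial structure minus a disc, and a capped-off spin structure on $M_{g,r-1}$. This never uses Claim~\ref{claim:1}. It only needs two standard facts. First, spin structures inducing isomorphic structures on a common circle can be glued. Second, whether a circle's induced structure is bounding does not depend on the choice of normal. You note the second fact, and it settles the only compatibility issue along $C$.

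\textbf{What each route buys.} Your construction is explicit and gives more control: you can prescribe which two boundary circles carry the trivial structure. It also avoids depending on Claim~\ref{claim:1}, whose proof in the paper is the longer and more informal part. The paper's argument is shorter once Claim~\ref{claim:1} is available, and it needs nothing beyond the count $2^{2g+r-1}$ of spin structures.

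\textbf{Your cohomological variant.} This goes further still. You identify the image of $H^1(M_{g,r};\mathbb{Z}_2)\to\bigoplus_i H^1(C_i;\mathbb{Z}_2)$ with the even-weight vectors. Combined with the all-bounding structure restricted from $\overline{M}_g$, this shows the achievable boundary patterns are exactly those with an even number of trivial circles. That recovers Claims~\ref{claim:1} and~\ref{claim:2} simultaneously.
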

\begin{proof}
The inclusion map $M_{g,r}\to\overline{M}_g$ described above pulls every spin structure from $\overline{M}_g$ back onto a spin structure on $M_{g,r}$ and, because of $2^{2g+r-1}>2^{2g}$, at least one spin structure on $M_{g,r}$ is not induced by that inclusion map.
This means that, for that spin structure, at least one end has nonbounding induced spin structure.
Claim \ref{claim:1} concludes the proof.
\end{proof}

Next we consider the twisted Dirac operator $A=D_{\mathbb{S}^1}^E$ on $\mathbb{S}^1$ and  want to compute $\dim(\ker(A))$.
As before, we assume $E_{|_{\mathbb{S}^1}}\to \mathbb{S}^1$ to be real of rank $m$.
Then it is well-known that $E_{|_{\mathbb{S}^1}}\to \mathbb{S}^1$ must be isomorphic to either the trivial real vector bundle $\underline{\R}^m\to \mathbb{S}^1$ or to the direct sum $\underline{\R}^{m-1}\oplus\mathbb{M}\to \mathbb{S}^1$ of some trivial bundle of rank $m-1$ with the M\"obius bundle, which is the unique nontrivial real line bundle over $\mathbb{S}^1$.
We start with the case where $E_{|_{\mathbb{S}^1}}\to \mathbb{S}^1$ is oriented i.e., trivial.
Recall that the $\alpha$-genus $\alpha(\mathbb{S}^1)\in\mathbb{Z}_2$ of $\mathbb{S}^1$ is $1$ if the spin structure of $\mathbb{S}^1$ is trivial (unbounding) and $0$ if it is nontrivial (bounding).
\begin{claim}\label{claim:3}\emph{Assume $E_{|_{\mathbb{S}^1}}=\underline{\R}^m\to \mathbb{S}^1$ is the trivial real vector bundle of rank $m$.
If $\alpha(\mathbb{S}^1)=1$, then $\dim(\ker(D_{\mathbb{S}^1}^E))$ has the same parity as $m$ and therefore is positive if $m$ is odd.
If $\alpha(\mathbb{S}^1)=0$, then $\dim(\ker(D_{\mathbb{S}^1}^E))$ is always even (but not necessarily positive).}
\end{claim}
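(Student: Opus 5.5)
The plan is to reduce the twisted operator to $m$ copies of an untwisted one and use the quaternionic structure to force parity. Since $E_{|_{\mathbb{S}^1}}$ is trivial as a real bundle, but its metric connection $\nabla^E$ need not be trivial, I will first gauge-fix the connection. A metric connection on $\underline{\R}^m\to\mathbb{S}^1$ is, up to a gauge transformation $\mathbb{S}^1\to O(m)$, determined by its holonomy $H\in SO(m)$. Triviality of the bundle, i.e. orientability, is exactly what puts $H$ in $SO(m)$ rather than $O(m)\setminus SO(m)$. Using a gauge in which the connection form is constant, I will diagonalise: $H$ is conjugate to a block-diagonal matrix made of $2\times2$ rotation blocks $R(\theta_k)$, together with $1\times1$ blocks equal to $+1$. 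In the complexification $E\otimes\C$, which is the bundle actually tensored with the complex spinor bundle, each rotation block splits into two line bundles with holonomy $e^{\pm i\theta_k}$. Each $+1$ block gives a complex line bundle with trivial holonomy.

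Next I will compute the kernel on each line. On $\mathbb{S}^1$ of length $L$, the spinor bundle is a trivial complex line bundle. The Dirac operator is $i\frac{d}{ds}$, up to sign, acting on functions that are periodic for the trivial spin structure ($\alpha=1$) and antiperiodic for the bounding one ($\alpha=0$). Twisting by a flat line bundle with holonomy $e^{i\theta}$ shifts the boundary condition to $\psi(s+L)=\pm e^{i\theta}\psi(s)$. The kernel consists of the constants satisfying this condition. So the kernel has dimension $1$ exactly when $e^{i\theta}=1$ for $\alpha=1$, and exactly when $e^{i\theta}=-1$ for $\alpha=0$; otherwise it is $0$.

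Then I count. The conjugate pair $e^{\pm i\theta_k}$ contributes either $0$ or $2$ in both cases, since $e^{i\theta}=\pm1$ if and only if $e^{-i\theta}=\pm1$. Hence the rotation blocks always contribute an even number. The number $p$ of $+1$ eigenvalues of $H$ satisfies $p\equiv m \pmod 2$, because the $-1$ eigenvalues of an element of $SO(m)$ come in even number and can themselves be grouped as rotations by $\pi$. For $\alpha=1$ each $+1$ eigenvalue contributes $1$, so $\dim\ker\equiv p\equiv m\pmod 2$. For $\alpha=0$ the $+1$ eigenvalues contribute nothing, and the $-1$ eigenvalues, treated as rotation blocks by $\pi$, contribute in pairs, so the dimension is even. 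This proves both statements, and in particular gives positivity when $m$ is odd and $\alpha=1$. An alternative route to the $\alpha=0$ case is to invoke the quaternionic structure on $\Sigma\mathbb{S}^1\otimes E$ commuting with $A$, as recalled in Section~\ref{ss:indexthmwithboundary}. I will nonetheless prefer the explicit computation above, because it also yields the parity statement for $\alpha=1$.

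The main obstacle I expect is the bookkeeping of the $-1$ eigenvalues together with the spin-structure sign conventions: the relevant condition is that holonomy times spin holonomy equals $1$. I will fix the convention once by checking the untwisted case $m=1$. There the kernel must be $1$-dimensional for the trivial spin structure and $0$-dimensional for the bounding one, which matches $\alpha(\mathbb{S}^1)$.
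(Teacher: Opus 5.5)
Your proof is correct and is essentially the paper's. Both arguments identify $\ker(D_{\mathbb{S}^1}^E)$ with the $1$-eigenspace (trivial spin structure) resp.\ the $-1$-eigenspace (bounding spin structure) of the holonomy $H\in\mathrm{SO}_m$ of $\nabla^E$, and then use $\det H=1$ for the parity count; you do this line by line after diagonalising $H$ on $E\otimes\C$, while the paper uses $(D_{\mathbb{S}^1}^E)^2=\nabla^*\nabla$ to reduce to parallel sections. Drop your suggested alternative for $\alpha(\mathbb{S}^1)=0$, however: in dimension $1$ the structure recalled in Section~\ref{ss:indexthmwithboundary} is a real structure anti-commuting with Clifford multiplication, hence with $A$, so it only yields spectral symmetry, and since it exists for both spin structures it could not force an even kernel anyway (for $\alpha(\mathbb{S}^1)=1$ and $m$ odd the kernel is odd-dimensional).
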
\begin{proof}
If the spin structure on $\mathbb{S}^1=\rquot{\R}{2\pi\mathbb{Z}}$ is the trivial one, then $\Sigma\mathbb{S}^1\otimes E_{|_{\mathbb{S}^1}}\cong\underline{\C}\otimes\underline{\R}^m\cong\underline{\C}^m$ and $\Gamma(\Sigma\mathbb{S}^1\otimes E_{|_{\mathbb{S}^1}})\cong C_{2\pi}^\infty(\R,\C^m)$ i.e., the smooth sections of the twisted spinor bundle coincide with the $2\pi$-periodic $\C^m$-valued smooth functions on $\R$.
Moreover, because of $E_{|_{\mathbb{S}^1}}\to \mathbb{S}^1$ being flat, i.e. $R^E=0$, over $\mathbb{S}^1$, which is itself flat, the Schr\"odinger-Lichnerowicz formula reads $(D_{\mathbb{S}^1}^E)^2=(\nabla^{\Sigma\mathbb{S}^1\otimes E})^*\nabla^{\Sigma\mathbb{S}^1\otimes E}$, so that the kernel of $D_{\mathbb{S}^1}^E$ coincides with the space of parallel sections of $\Sigma\mathbb{S}^1\otimes E_{|_{\mathbb{S}^1}}\cong\underline{\C}^m$.
But a parallel section of $\underline{\C}^m\to\mathbb{S}^1$ is a closed horizontal lift of the closed curve $\mathbb{S}^1$ of the form $1\otimes s+i\otimes s'$ with parallel sections $s,s'$ of $E$ that is, it is uniquely determined by an eigenvector to the eigenvalue $1$ (because of the $2\pi$-periodicity) of the parallel transport in $E$ along the closed curve $t\mapsto e^{it}$ on $[0,2\pi]$.
Since the parallel transport in $E$ along that curve remains in $\mathrm{SO}_m$, 
it must have an odd number of $1$'s as eigenvalues (counting multiplicities) if $m$ is odd and an even number if $m$ is even.
In particular, $\dim(\ker(D_{\mathbb{S}^1}^E))$ has the same parity as $m$ and is therefore positive when $m$ is odd.
In that case, each connected component of the boundary onto which the restricted spin structure is trivial provides an odd number of twisted spinors in the kernel, and those odd numbers add up to an even one since there are an even number of such boundary circles.\\
If the restricted spin structure is nontrivial, then the smooth sections of $\Sigma\mathbb{S}^1\otimes E_{|_{\mathbb{S}^1}}$ are exactly the $2\pi$-anti-periodic smooth functions $\R\to\underline{\C}^m$, so that $\dim(\ker(D_{\mathbb{S}^1}^E))$ coincides with the dimension of the $-1$-eigenspace of the parallel transport along the curve $t\mapsto e^{it}$ on $[0,2\pi]$ in $E$, and that dimension must be even because of the parallel transport remaining in $\mathrm{SO}_m$.
\end{proof}

If $E_{|_{\mathbb{S}^1}}\to \mathbb{S}^1$ is nontrivial, then $\ker(D_{\mathbb{S}^1}^E)=\ker(\nabla^{\Sigma\mathbb{S}^1\otimes E})$ still holds because of $R^E=0$, however we face the following issue: although $E\cong\underline{\R}^{m-1}\oplus\mathbb{M}$, there is no reason in general for the connection $\nabla^E$ to preserve that splitting.
As a special case, if $\nabla^E$ preserves the splitting $E\cong\underline{\R}^{m-1}\oplus\mathbb{M}$, then $D_{\mathbb{S}^1}^E$ splits into diagonal form $D_{\mathbb{S}^1}^E=D_{\mathbb{S}^1}^{\underline{\R}^{m-1}}\oplus D_{\mathbb{S}^1}^{\mathbb{M}}$, where $D_{\mathbb{S}^1}^{\mathbb{M}}$ is the Dirac operator associated to the unique metric connection on $\mathbb{M}\to\mathbb{S}^1$ (there is no $0$-order term here).
If $\alpha(\mathbb{S}^1)=1$, then $D_{\mathbb{S}^1}^{\mathbb{M}}$ has trivial kernel whereas the kernel of $D_{\mathbb{S}^1}^{\underline{\R}^{m-1}}$ coincides with the $1$-eigenspace of the parallel transport.
In that case, $\dim(\ker(D_{\mathbb{S}^1}^E))$ has the same parity as $m-1$ and must therefore be positive if $m$ is even.
If $\alpha(\mathbb{S}^1)=0$, then $D_{\mathbb{S}^1}^{\mathbb{M}}$ has $1$-dimensional kernel whereas the kernel of $D_{\mathbb{S}^1}^{\underline{\R}^{m-1}}$ coincides with the $-1$-eigenspace of the parallel transport and therefore has even dimension.
In that case, $\dim(\ker(D_{\mathbb{S}^1}^E))$ is odd and therefore always positive.\\

The main result of this section is the following:
\begin{Thm}\label{t:exDhMgr>=2}
Let $M_{g,r}$ be any compact oriented Riemannian surface of genus $g$ with $r\geq2$ boundary circles and $N^m$ be any odd-dimensional compact oriented Riemannian manifold with or without boundary.
Then there is a spin structure on $M_{g,r}$ for which there exists, for any relative homotopy class $[f]$ of maps $M_{g,r}\to N$ with Dirichlet boundary condition $f_{|_{\partial M_{g,r}}}=u\in C^\infty(\partial M_{g,r},N)$, a nontrivial uncoupled Dirac-harmonic map $(f_0,\phi_0)$ with Dirichlet boundary condition for $f_0\in [f]$ and APS boundary condition for $\phi_0$.
\end{Thm}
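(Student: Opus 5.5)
The plan is to combine the index formula in dimension $n=2\in 2+4\mathbb{Z}$ with the topological Claims \ref{claim:1}--\ref{claim:3}, and then to use the $\mathbb{Z}_2$-grading argument of Proposition \ref{p:examplesDhmapsZ2grading} exactly as in the proof of Theorem \ref{t:existDhmapsgAPSasmall}.

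\textbf{Step 1 (spin structure and harmonic map).} Since $r\geq2$, Claim \ref{claim:2} gives a spin structure on $M_{g,r}$ for which at least two boundary circles carry the trivial (unbounding) induced spin structure. By Claim \ref{claim:1}, their number is even and positive. We fix this spin structure once and for all; it does not depend on $[f]$ or $u$. Next, given $[f]$ and $u$, we need a harmonic map $f_0\in[f]$ with $f_0{}_{|_{\partial M_{g,r}}}=u$. For this we invoke Theorem \ref{t:Lemaire}(2), or rather its improvement \cite[Theorem 1.1]{MR1030856}, which, as remarked after Theorem \ref{t:Lemaire}, only needs $N$ to be compact.

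I expect this to be the main obstacle. Without $\pi_2(N)=0$ one gets a harmonic extension of $u$, but not obviously one in the prescribed relative homotopy class, and the boundary of $N$ (if nonempty) must also be handled. The first thing I would try is to use the cited surface results exactly as the paper reads them. Failing that, I would add a convexity or curvature hypothesis, or note that the spinor part of the argument below works for any smooth $f$ in the class.

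\textbf{Step 2 (nontrivial index).} Since $n=2$ and $E=f_0^*TN$ is real, Section \ref{ss:indexthmwithboundary} gives
\[
\mathrm{ind}(D_+^{f_0})=-\tfrac12 h(A),\qquad A=D_{\partial M}^{u^*TN}=\bigoplus_{j=1}^r D_{\mathbb{S}^1_j}^{u^*TN},
\]
and by Remark \ref{r:maphomotopyinvarianceindim2mod4} this quantity depends only on $u$. Because $N$ is oriented, each restriction $u^*TN_{|_{\mathbb{S}^1_j}}$ is an oriented real bundle over a circle, hence trivial. Claim \ref{claim:3} then applies on every boundary circle. Since $m$ is odd, each of the (at least two) circles with trivial spin structure contributes a positive, odd-dimensional kernel, and the remaining circles contribute nonnegative even dimensions. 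Therefore $h(A)>0$, so $\mathrm{ind}(D_+^{f_0})<0$. This yields a nonzero $\Phi_0\in\ker(D^{f_0})$ with $\Phi_0{}_{|_{\partial M}}\in B_{\rm APS}$; indeed $\dim\ker(D^{f_0})\geq\frac12h(A)$.

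\textbf{Step 3 (uncoupling).} Take $\mathcal{G}$ to be Clifford multiplication by the complex volume form. By Lemma \ref{l:GcommuteswithA0} with $a=0$, $\mathcal{G}\otimes\mathrm{id}$ preserves $B_{\rm APS}=B_{\rm gAPS}(0)$, and $B_{\rm APS}$ makes $D^{f_0}$ symmetric. Because $f_0$ satisfies the Dirichlet condition, the twist bundle and its connection along $\partial M$ are independent of $t$ for admissible variations. Hence $\partial_t\phi(0)$ stays in $B$, as observed in Section \ref{ss:varformula}. Proposition \ref{p:examplesDhmapsZ2grading} then shows that both $(f_0,\Phi_0{}_\pm)$ are uncoupled Dirac-harmonic maps with APS boundary condition. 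Choosing $\phi_0$ to be whichever of $\Phi_0{}_+,\Phi_0{}_-$ is nonzero completes the argument. Nontriviality holds because $\phi_0\neq0$ and, when $u$ is nonconstant, $f_0$ is nonconstant.
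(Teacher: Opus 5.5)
You follow the paper's proof step for step: Claim~\ref{claim:2}, Chang's existence theorem, $\mathrm{ind}(D_+^{f_0})=-\frac12h(A)$ with $h(A)>0$ by Claim~\ref{claim:3}, then Lemma~\ref{l:GcommuteswithA0} and Proposition~\ref{p:examplesDhmapsZ2grading}. You therefore inherit the paper's gap, and it sits in Step~2, not Step~1. A negative index does \emph{not} give a nonzero $\Phi_0\in\ker(D^{f_0})$ with $\Phi_0{}_{|_{\partial M}}\in B_{\rm APS}$. It only says that $(D_+^{f_0})_{B_{\rm APS}}$ has nonzero cokernel. That cokernel is the kernel of $D_-^{f_0}$ under the \emph{adjoint} condition, namely the $\Sigma_-$-part of $B_{\rm APS}^{\rm ad}=B_{\rm APS}\oplus\ker(A_0)$. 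Precisely because $h(A)>0$, $B_{\rm APS}$ is not self-adjoint. So the inequality $\dim\ker(D^{f_0})_{B_{\rm APS}}\geq|\mathrm{ind}(D_+^{f_0})|$, which you take over from the closed (or self-adjoint) case, is unavailable. Indeed, the index of $D_-^{f_0}$ under the $\Sigma_-$-part of $B_{\rm APS}$ is also $-\frac12h(A)$, so neither half is forced to have kernel.

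The conclusion really fails. Take the flat cylinder $M=\mathbb{S}^1\times[0,1]$ ($g=0$, $r=2$) with the spin structure that is trivial on both boundary circles (the one Claim~\ref{claim:2} produces), $N=\mathbb{S}^1$ and $u$ constant.
\begin{itemize}
\item For every map $f\colon M\to\mathbb{S}^1$, the bundle $f^*T\mathbb{S}^1$ is trivialized by a parallel section, so $D^{f}$ is the untwisted Dirac operator.
\item Since $D=\nu\cdot(\partial_t+A_0)$ with $A_0$ independent of $t$, harmonic spinors are sums of modes $e^{-\lambda t}\varphi$ with $A_0\varphi=\lambda\varphi$.
\item The APS condition at $t=0$ forces $\lambda<0$. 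At $t=1$ the adapted operator is $-A_0$, and the APS condition forces $\lambda>0$.
\end{itemize}
Hence $\ker(D^{f_0})_{B_{\rm APS}}=\{0\}$ in every relative class, although $\mathrm{ind}(D_+^{f_0})=-1$. The cokernel is spanned by the parallel section of $\Sigma_-M\otimes f_0^*T\mathbb{S}^1$, whose boundary values lie in $\ker(A_0)$, not in $B_{\rm APS}$. The other spin structure on the cylinder fails by the same computation.

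What your argument does prove is the statement with $B_{\rm APS}$ replaced by $B':=H^{\frac12}_{(-\infty,0]}(A)\oplus H^{\frac12}_{(-\infty,0)}(-A)$, with the first summand in $\Sigma_+M$ and the second in $\Sigma_-M$ along $\partial M$. This condition is elliptic, self-adjoint and invariant under $\mathcal{G}\otimes\mathrm{id}$. Moreover $\mathrm{ind}(D_+^{f_0})_{B'}=-\frac12h(A)+h(A)=\frac12h(A)>0$, so $\ker(D_+^{f_0})_{B'}\neq\{0\}$, and your Step~3 then applies unchanged.
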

\begin{proof}
By Claim \ref{claim:2}, since $r\geq2$, at least one spin structure exists on $M=M_{g,r}$ for which a positive even number of boundary circles carry the trivial spin structure as induced spin structure.
Fixing that spin structure on $M$, let $[f]$ be any relative homotopy class of maps $M\to N$ with Dirichlet boundary condition $f_{|_{\partial M}}=u\in C^\infty(\partial M,N)$.
Then we know from \cite[Theorem 1.1]{MR1030856} and the remark that follows that a harmonic map $f_0\in[f]$ in the same relative homotopy class exists.
Since $N^m$ is odd dimensional and oriented, so is the pull-back bundle $f_0^*TN\to M$ and, because of a positive even number of boundary circles carrying the trivial spin structure,
\[\dim(\ker(D^{f_0})\geq|\mathrm{ind}(D_+^{f_0})|=\frac{1}{2}h(A)>0\]
holds by Claim \ref{claim:3}.
Therefore, at least one nonzero $\Phi_0\in\ker(D^{f_0})$ exists with $\Phi_0{}_{|_{\partial M}}\in B_{\mathrm{APS}}$.
It remains to notice that, for the natural $\mathbb{Z}_2$-grading $\mathcal{G}$ provided by the pointwise Clifford action of the complex volume form of $M$ on $\Sigma M$, we have $(\mathcal{G}\otimes\mathrm{id})(B_{\mathrm{APS}})\subset B_{\mathrm{APS}}$ by Lemma \ref{l:GcommuteswithA0}.
Therefore, Proposition \ref{p:examplesDhmapsZ2grading} can be applied and the result follows.
\end{proof}
For the time being, the only difference with \cite[Sect. 9]{MR3070562} is that no explicit lower bound can be written for the dimension of the space of $\phi_0$'s.\\

\subsection{Existence result in four dimensions}\label{ss:dimM=4}
On a $4$-dimensional compact Riemannian spin manifold \((M^4,g)\) with boundary \(\partial M\), the Atiyah-Patodi-Singer index formula simplifies to
\begin{align}
\label{eq:index-four-dimensions}
\mathrm{ind}(D_+^f)=&-\frac{\dim(N)}{24}p_1(TM)[M]
+\frac{1}{2}\big(c_1(f^\ast TN)^2-2c_2(f^\ast TN)\big)[M] \\
\nonumber&-\frac{\dim(N)}{24\cdot8\pi^2}T(\sff\wedge R^{TM})[\partial M]
-\frac{1}{2}\big(\eta(A)+h(A)\Big),
\end{align}
see e.g. \cite[p. 333 and p. 351]{MR598586} where that formula is derived.
Here, \(\sff\) represents the second fundamental form of the boundary.
Note that, since all odd Chern classes of any real vector bundle vanish as mentioned above, one has $c_1(f^*TN)=f^*c_1(TN)=0$.\\

A class of examples in $4$ dimensions can be constructed by assuming that the map
\(f\) is given by the identity map, which is a very simple example of a harmonic map.

\begin{Lem}\label{l:indexformulawhenphi=id}
Let \((M,g)\) be a $4$-dimensional compact Riemannian spin manifold with non-empty boundary \(\partial M\).
Consider the identity map \(\mathrm{id}_M\colon M\to M\) which is trivially harmonic. 
Then, the index of the twisted Dirac operator \(D_+^{\mathrm{id}_M}\) simplifies to
\begin{align*}
 \mathrm{ind}(D^{\mathrm{id}_M}_+)=&\frac{5}{6}p_1(TM)[M]
-\frac{1}{48\pi^2}T(\sff\wedge R^{TM})[\partial M] 
-\frac{1}{2}\big(\eta(A)+h(A)\big). 
\end{align*}
\end{Lem}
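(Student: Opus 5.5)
Specialize formula \eqref{eq:index-four-dimensions} to $N=M$ and $f=\mathrm{id}_M$. The twist bundle is then $f^*TN=TM$ and $\dim(N)=4$. The task is to express the two characteristic numbers $c_1(TM\otimes\C)^2[M]$ and $c_2(TM\otimes\C)[M]$ through $p_1(TM)[M]$. The boundary terms $T(\sff\wedge R^{TM})[\partial M]$ and $\eta(A)+h(A)$ are not modified, apart from their prefactors. Here $A=D_{\partial M}^{TM}$ is the boundary Dirac operator twisted by $TM_{|_{\partial M}}$.

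First, as noted right after \eqref{eq:index-four-dimensions}, the bundle $TM$ is real, so $c_1(TM\otimes\C)=0$. Second, by definition of the first Pontryagin class, $p_1(TM)=-c_2(TM\otimes\C)$. This holds at the level of Chern–Weil forms, which matters because the index formula is integrated against forms on a manifold with boundary. Hence
\[\tfrac{1}{2}\big(c_1^2-2c_2\big)(f^*TN)[M]=-c_2(TM\otimes\C)[M]=p_1(TM)[M].\]
Equivalently, $\mathrm{ch}(TM\otimes\C)=4+p_1(TM)+\dots$ in degree $\leq4$.

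Third, the remaining terms. The interior $\hat A$-contribution is $-\frac{4}{24}p_1(TM)[M]$. Summing gives $\big(1-\frac16\big)p_1(TM)[M]=\frac56p_1(TM)[M]$. The transgression coefficient becomes $-\frac{4}{24\cdot8\pi^2}=-\frac{1}{48\pi^2}$. Collecting these terms yields exactly the stated formula.

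The only delicate point is the interpretation of $p_1(TM)[M]$ on a manifold with boundary. It must be read as the integral of the Chern–Weil form $-\frac{1}{8\pi^2}\mathrm{tr}(R^{TM}\wedge R^{TM})$ over $M$, since no fundamental class pairing is available. One must then check that the transgression correction in \eqref{eq:index-four-dimensions} accounts only for the $\hat A$ part. For this I rely on the convention of \cite{MR598586}: the transgression term arises from the non-product metric near the boundary, and for the $\mathrm{ch}$ part it is already included in that formula as stated. Beyond this, the proof is a direct substitution.
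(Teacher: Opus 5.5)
Your proof is correct and is essentially the paper's argument: you substitute $\dim N=4$, $c_1=0$ and a relation between $c_2$ and $p_1$ into \eqref{eq:index-four-dimensions}, and carry the boundary terms over with only their prefactors changed. Your normalization $c_2(TM\otimes\C)=-p_1(TM)$, i.e.\ $\mathrm{ch}_2(TM\otimes\C)=p_1(TM)$, is in fact the one that yields $-\frac{1}{6}+1=\frac{5}{6}$; the identity $c_2(TM)=-\frac{1}{2}p_1(TM)$ written in the paper's proof would, inserted literally with $\dim N=4$, give $-\frac{1}{6}+\frac{1}{2}=\frac{1}{3}$ instead.
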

\begin{proof}
In the case that \(f=\mathrm{id}_M\) we are led to the twisted Dirac operator on \(\Sigma M\otimes TM\).
Moreover, in this case we have \(c_1(TM)=0, c_2(TM)=-\frac{1}{2}p_1(TM)\) and the result follows from
\eqref{eq:index-four-dimensions}.
\end{proof}

In the sequel we build examples from closed spin $4$-manifolds $\overline{M}$ with nonvanishing signature $\sigma(\overline{M}):=\frac{1}{3}p_1(T\overline{M})[\overline{M}]$, such as $K3$-surfaces, in which case $\sigma(\overline{M})=-16$ \cite[Sec. 10.2.4]{MR598586} or such as any connected sum of $K3$-surfaces using $\sigma(\sharp^k\overline{M})=k\sigma(\overline{M})$ \cite[p. 727]{zbMATH01522633}.
\begin{Thm}\label{t:exdim4}
Let $\overline{M}^4$ be any $4$-dimensional closed spin manifold with nonvanishing signature.
Let $M^4:=\overline{M}^4\setminus\mathbb{B}^4$, where $\mathbb{B}^4$ is an embedded ball in $\overline{M}$ and let $g$ be any Riemannian metric on $M$ that is of product type $g=g_0\oplus dt^2$ in a neighbourhood of $\partial M=\mathbb{S}^3$, where $g_0$ is the standard metric on $\mathbb{S}^3$.
Consider the spin structure that is induced from $\overline{M}$ to $M$.\\
Then $\mathrm{id}_M$ yields a nontrivial uncoupled Dirac-harmonic map on $M$ with Dirichlet boundary condition $\mathrm{id}_M{}_{|_{\partial M}}=\mathrm{id}_{|_{\partial M}}$ as well as the APS boundary condition for the spinor component.
\end{Thm}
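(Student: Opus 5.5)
The plan is to verify the hypotheses of Proposition \ref{p:examplesDhmapsZ2grading} for $f_0=\mathrm{id}_M$. Once $\ker(D^{\mathrm{id}_M})$ contains a nonzero $\Phi_0$ with $\Phi_0{}_{|_{\partial M}}\in B_{\rm APS}$, the argument closes exactly as in the proof of Theorem \ref{t:exDhMgr>=2}. The identity map is harmonic. With the Dirichlet condition, the boundary space is preserved by $\partial_t\phi(0)$. Lemma \ref{l:GcommuteswithA0} gives $(\mathcal{G}\otimes\mathrm{id})(B_{\rm APS})\subset B_{\rm APS}$. So one of $\Phi_0{}_\pm$ is nonzero and yields the uncoupled Dirac-harmonic map. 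The whole task is therefore to show $\mathrm{ind}(D_+^{\mathrm{id}_M})\neq0$, using the formula of Lemma \ref{l:indexformulawhenphi=id}.

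\textbf{Transgression term.} Since $g=g_0\oplus dt^2$ near $\partial M$, the boundary is totally geodesic, so $\sff=0$ and $T(\sff\wedge R^{TM})[\partial M]=0$.

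\textbf{Pontryagin term.} I would show that $p_1(TM)[M]$, meaning the integral of the Pontryagin form over $M$, equals $p_1(T\overline{M})[\overline{M}]=3\sigma(\overline{M})$. Glue back a ball $\mathbb{B}^4$ carrying a metric that is a round hemisphere with a product collar $g_0\oplus dt^2$ attached; this gives a smooth metric on $\overline{M}$. The Pontryagin integral is additive under this gluing. On the capped ball, apply the untwisted APS formula. The metric there has positive scalar curvature (round hemisphere plus a cylinder over round $\mathbb{S}^3$), so the index vanishes. The untwisted Dirac operator on round $\mathbb{S}^3$ has no kernel and symmetric spectrum, so $\eta=h=0$. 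Hence $\int_{\mathbb{B}^4}p_1=0$, and $p_1(TM)[M]=3\sigma(\overline{M})$. The first term then becomes $\frac{5}{2}\sigma(\overline{M})$, which is nonzero by assumption; by Rokhlin's theorem it is even a nonzero multiple of $40$.

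\textbf{Boundary spectral terms.} Here $A$ is the Dirac operator of round $\mathbb{S}^3$ twisted by $TM_{|_{\mathbb{S}^3}}=T\mathbb{S}^3\oplus\underline{\R}$. Because the metric is a product near the boundary, the connection splits accordingly and
\[A\cong D_{\mathbb{S}^3}\oplus D_{\mathbb{S}^3}^{T\mathbb{S}^3}.\]
For $\eta(A)=0$: an orientation-reversing isometry of $\mathbb{S}^3$, such as a reflection, lifts to the spinor bundle. It preserves $T\mathbb{S}^3$ with its Levi-Civita connection and anticommutes with both summands, so the spectrum of $A$ is symmetric. For $h(A)$: $\ker D_{\mathbb{S}^3}=0$ by Lichnerowicz. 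For the second summand I would decompose $\Sigma\mathbb{S}^3\otimes T\mathbb{S}^3\cong\Sigma\mathbb{S}^3\oplus\Sigma_{3/2}\mathbb{S}^3$ and use the known spectra of homogeneous Dirac-type operators on $\mathbb{S}^3=\mathrm{Spin}(4)/\mathrm{Spin}(3)$ (Branson, Bär), or a twisted Weitzenböck estimate, to get $h=0$. Then
\[\mathrm{ind}(D_+^{\mathrm{id}_M})=\tfrac{5}{2}\sigma(\overline{M})\neq0.\]

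\textbf{Main obstacle.} I expect the computation of $h(A)$ to be the hard step. If the kernel does not vanish, the fallback is to bound it explicitly by representation theory, show $h(A)<5|\sigma(\overline{M})|$, and conclude nonvanishing of the index from that bound.
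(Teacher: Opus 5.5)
Your outline follows the paper's proof. The transgression term vanishes, $A\cong D_{\mathbb{S}^3}\oplus D_{\mathbb{S}^3}^{T\mathbb{S}^3}$, $\eta(A)=0$ by a reflection of $\mathbb{S}^3$, and Proposition~\ref{p:examplesDhmapsZ2grading} with Lemma~\ref{l:GcommuteswithA0} concludes.

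Your treatment of the Pontryagin term is more careful than the paper's. The paper simply writes $\frac13 p_1(TM)[M]=\sigma(M)=\sigma(\overline{M})-\sigma(\mathbb{B}^4)$. For a manifold with boundary, that tacitly uses the APS signature theorem and the vanishing of the $\eta$-invariant of the odd signature operator on round $\mathbb{S}^3$. Your positive-scalar-curvature cap needs only the untwisted APS formula. One minor point: a round hemisphere does not glue smoothly onto a product collar. Use instead a concave warped ``torpedo'' metric $dr^2+\varphi(r)^2g_0$, which still has positive scalar curvature.

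The gap is the step you flag yourself. The statement $\ker D_{\mathbb{S}^3}^{T\mathbb{S}^3}=\{0\}$ is the whole content of Lemma~\ref{l:specDTS3}, and you only name possible methods for it.

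The pointwise (Friedrich-type) Weitzenb\"ock bound, which is what the paper uses, cannot work:
\begin{itemize}
\item Write $(D^T)^2=\nabla^*\nabla+\frac{S}{4}+\mathfrak{R}$ with $\mathfrak{R}=\frac12\sum_{j,k}e_j\cdot e_k\cdot\otimes R_{e_j,e_k}$, and set $\iota\psi:=\sum_j e_j\cdot\psi\otimes e_j$.
\item From $R_{e_j,e_k}e_l=\delta_{kl}e_j-\delta_{jl}e_k$ and $\sum_j e_j\cdot e_k\cdot e_j=e_k$ in dimension $3$, one gets $\mathfrak{R}\,\iota\psi=-2\,\iota\psi$.
\item $\mathfrak{R}$ is $\mathrm{Spin}(3)$-equivariant, so it acts by a scalar on the irreducible complement $\Sigma_{3/2}$. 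The condition $\mathrm{tr}\,\mathfrak{R}=0$ forces that scalar to be $+1$.
\item Hence $\frac{S}{4}+\min\mathfrak{R}=-\frac12<0$. In the paper's normalization, $\kappa_1=-8$, not $2(1-\sqrt{13})$.
\end{itemize}
The paper's computation relies on $(e_i\wedge e_j)^2=-\mathrm{id}$. On $T\mathbb{S}^3$ this square is actually minus the projection onto $\mathrm{span}(e_i,e_j)$. You can check the failure directly: for a $\pm\frac12$-Killing spinor $\psi$ one has $D^T\iota\psi=\pm\frac12\,\iota\psi$. This eigenvalue violates the paper's claimed bound $\lambda_1((D^T)^2)\geq 3-\frac34\sqrt{13}\approx 0.30$.

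A block-wise Weitzenb\"ock argument does work:
\begin{itemize}
\item $\iota(\Sigma\mathbb{S}^3)$ and $\Sigma_{3/2}$ are parallel and $\mathfrak{R}$-invariant, so $(D^T)^2$ is block-diagonal.
\item On the first block it equals $\nabla^*\nabla-\frac12$; on the second it equals $\nabla^*\nabla+\frac52$.
\item Since $\iota$ is a parallel homothety, $\nabla^*\nabla$ on the first block is $D_{\mathbb{S}^3}^2-\frac32\geq\frac94-\frac32=\frac34$.
\item Therefore $(D^T)^2\geq\frac14$, and the kernel is trivial.
\end{itemize}
Your Frobenius-reciprocity route on $\mathrm{Spin}(4)/\mathrm{Spin}(3)$ also gives this, but it has to be carried out.

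Finally, $h(A)=0$ matters beyond the value of the index. It makes $B_{\rm APS}$ self-adjoint, so that for $\sigma(\overline{M})<0$ (e.g.\ K3) the cokernel of $D_+^{\mathrm{id}_M}$ is a kernel of $D_-^{\mathrm{id}_M}$ under the APS condition itself. In your fallback with $h(A)\neq0$, a nonvanishing index would not suffice. You would need $\mathrm{ind}(D_\pm)_{B_{\rm APS}}>0$ for the chirality matching the sign of $\sigma$, and that is exactly what $h(A)<5|\sigma(\overline{M})|$ provides.
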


\begin{proof}
Since $\partial M$ is totally geodesic in $M$, the transgression term in the APS-formula vanishes.
Moreover, the operator $\displaystyle A=D_{\mathbb{S}^3}^{TM}$ is given by $D_{\mathbb{S}^3}^{T\mathbb{S}^3}\oplus D_{\mathbb{S}^3}$.
\begin{Lem}\label{l:specDTS3}
The twisted Dirac operator $D_{\mathbb{S}^3}^{T\mathbb{S}^3}$ on $\mathbb{S}^3$ has symmetric spectrum which does not contain $0$.
\end{Lem}
\begin{proof}
The symmetry of the spectrum of $D_{\mathbb{S}^3}^{T\mathbb{S}^3}$ relies on the fact that a reflection exists on $\mathbb{S}^3$ (along a fixed totally geodesic $2$-sphere) preserving the spin structure when taking the change of orientation into account.
Namely that reflection naturally induces a vector-bundle-isometry of $T\mathbb{S}^3$, which is parallel.
Since the Clifford multiplication by some tangent vector $X$ on $\Sigma\mathbb{S}^3\otimes T\mathbb{S}^3$ is given by $X\otimes\mathrm{id}_{T\mathbb{S}^3}$, both \cite[Lemma A.3]{MR2875865} and \cite[Lemma A.4]{MR2875865} can be adapted {\sl verbatim} to our twisted setting and yield that $D_{\mathbb{S}^3}^{T\mathbb{S}^3}$ anti-commutes with the vector-bundle-automorphism induced by the reflection on $\Sigma\mathbb{S}^3\otimes T\mathbb{S}^3$, from which the symmetry of the spectrum of $D_{\mathbb{S}^3}^{T\mathbb{S}^3}$ follows.\\
The triviality of $\ker(D_{\mathbb{S}^3}^{T\mathbb{S}^3})$ can be deduced from the twisted Friedrich inequality
\[\lambda_1((D_{\mathbb{S}^3}^{T\mathbb{S}^3})^2)\geq\frac{3}{4(3-1)}\inf_{\mathbb{S}^3}\left(S+\kappa_1\right),\]
where $S=6$ is the scalar curvature of $\mathbb{S}^3$ and $\kappa_1$ the smallest eigenvalue of the pointwise Hermitian endomorphism $\displaystyle\mathcal{R}^{T\mathbb{S}^3}:=2\sum_{j,k=1}^3e_j\cdot e_k\otimes R_{e_j,e_k}^{T\mathbb{S}^3}$ of $\Sigma\mathbb{S}^3\otimes T\mathbb{S}^3$, see e.g. \cite[Prop. 4.1]{zbMATH01911945}.
But an elementary computation 
shows that, in any (local or global) positively oriented orthonormal basis $(e_1,e_2,e_3)$ of $T\mathbb{S}^3$, $\displaystyle\mathcal{R}^{T\mathbb{S}^3}=4\left(e_1\cdot\otimes(e_2\wedge e_3)+e_2\cdot\otimes(e_3\wedge e_1)+e_3\cdot\otimes(e_1\wedge e_2)\right)$.
Therefore, using $(e_i\wedge e_j)^2=-\mathrm{id}$, $(e_i\wedge e_j)\circ(e_j\wedge e_k)=e_k\otimes e_i$ for all pairwise distinct $i,j,k$ as well as $e_1\cdot e_2\cdot e_3\cdot=-\mathrm{id}$, we obtain
\begin{eqnarray*}
\left(\mathcal{R}^{T\mathbb{S}^3}\right)^2&=&16\Big(e_1\cdot e_1\cdot\otimes(-\mathrm{id})+e_1\cdot e_2\otimes (e_1\otimes e_2)+e_1\cdot e_3\cdot\otimes(e_1\otimes e_3)\\
&&\phantom{16\Big(}+e_2\cdot e_1\otimes (e_2\otimes e_1)+e_2\cdot e_2\cdot\otimes(-\mathrm{id})+e_2\cdot e_3\cdot\otimes(e_2\otimes e_3)\\
&&\phantom{16\Big(}+e_3\cdot e_1\otimes (e_3\otimes e_1)+e_3\cdot e_2\otimes (e_3\otimes e_2)+e_3\cdot e_3\cdot\otimes(-\mathrm{id})\Big)\\
&=&16\Big(3\mathrm{id}+e_1\cdot e_2\otimes(e_1\wedge e_2)+e_2\cdot e_3\otimes(e_2\wedge e_3)+e_3\cdot e_1\otimes(e_3\wedge e_1)\Big)\\
&=&48\mathrm{id}+16\left(e_3\cdot\otimes(e_1\wedge e_2)+e_1\cdot\otimes(e_2\wedge e_3)+e_2\cdot\otimes(e_3\wedge e_1)\right)\\
&=&4\mathcal{R}^{T\mathbb{S}^3}+48\mathrm{id},
\end{eqnarray*}
so that the pointwise spectrum of $\displaystyle\mathcal{R}^{T\mathbb{S}^3}$ is contained in $\{2(1\pm\sqrt{13})\}$.
But $\kappa_1\geq2(1-\sqrt{13})>-6$, from which $\lambda_1((D_{\mathbb{S}^3}^{T\mathbb{S}^3})^2)>0$ and therefore $\ker(D_{\mathbb{S}^3}^{T\mathbb{S}^3})=\{0\}$ follow.
\end{proof}
Since the standard Dirac operator $D_{\mathbb{S}^3}$ has symmetric spectrum which does not contain $0$, so does $A$ by Lemma \ref{l:specDTS3}, therefore $\eta(A)=h(A)=0$ and the above index formula simplifies to 
\[\mathrm{ind}(D_+^{\mathrm{id}_M})=\frac{5}{6}p_1(TM)[M]=\frac{5}{2}\sigma(M),\]
where $\sigma(M)=\sigma(\overline{M})-\underbrace{\sigma(\mathbb{B}^4)}_{0}=\sigma(\overline{M})\neq0$ by assumption.
Therefore Proposition \ref{p:examplesDhmapsZ2grading} together with Lemma \ref{l:GcommuteswithA0} apply and yield the existence of a nontrivial uncoupled Dirac-harmonic map $(\mathrm{id}_M,\phi_0)$ satisfying the Dirichlet boundary condition $\mathrm{id}_M{}_{|_{\partial M}}=\mathrm{id}_{|_{\partial M}}$ as well as the APS boundary condition $\phi_0{}_{|_{\partial M}}\in B_{\rm APS}$.
\end{proof}

\begin{Bem}\label{r:other4dimexamples}
     Another attempt to obtain a non-vanishing index in four dimensions is to again consider the identity map
and to choose
      \[M_p := \text{the oriented $D^2$-bundle over } S^2 \text{ with Euler number } -p.\]
      Recall that \(M_p\) is a four-dimensional manifold with boundary \(\partial M_p\cong L(p,1)\)
      represented by a Lens space.
      There are several ways to prove this fact, we refer to \cite[Example 5.3.2.]{MR1707327} for the precise details.
      In this case the index can be computed via the signature operator in the inside and
      corresponding boundary contributions
      \begin{align}\label{eq:indD+idMdim4}
          \mathrm{ind} D_+^{\mathrm{id}_M}=\frac{5}{2}\sigma(M_p)-\frac{\eta(A)+h(A)}{2}.
      \end{align}
      The signature of \(M_p\) is well-known and is explicitly given by
      \begin{align*}
          \sigma(M_p)=
          \begin{cases}
              -1 & p>0\\
              +1 & p<0.
          \end{cases}
      \end{align*}
      As a consequence, \eqref{eq:indD+idMdim4} shows that the sum $\eta(A)+h(A)$ must be odd because of its l.h.s. being an integer.
      However, even when \(p=2\),
      in which case we have \(\partial M_p\cong\R P^3\), the
      \(\eta\)-invariant of the twisted Dirac operator on the boundary seems not to be known in the literature. 
      For this reason we cannot conclude that we can produce a non-vanishing index which is necessary for our approach.
\end{Bem}

\bibliographystyle{plain}
\bibliography{mybib}

\begin{thebibliography}{10}

\bibitem{MR4868118}
Wanjun Ai, Lei Liu, and Miaomiao Zhu.
\newblock Boundary blow-up analysis for approximate {D}irac-harmonic maps into
  stationary {L}orentzian manifolds.
\newblock {\em Sci. China Math.}, 68(3):649--676, 2025.

\bibitem{MR1962116}
Cecilia Albertsson, Ulf Lindstr\"{o}m, and Maxim Zabzine.
\newblock {$N=1$} supersymmetric sigma model with boundaries. {I}.
\newblock {\em Comm. Math. Phys.}, 233(3):403--421, 2003.

\bibitem{MR2022994}
Cecilia Albertsson, Ulf Lindstr\"{o}m, and Maxim Zabzine.
\newblock {$N=1$} supersymmetric sigma model with boundaries. {II}.
\newblock {\em Nuclear Phys. B}, 678(1-2):295--316, 2004.

\bibitem{bernd}
Bernd Ammann.
\newblock Are all {D}irac-harmonic maps uncoupled?
\newblock {\em arxiv:2209.03074}, 2022.

\bibitem{zbMATH05480735}
Bernd Ammann, Mattias Dahl, and Emmanuel Humbert.
\newblock Surgery and harmonic spinors.
\newblock {\em Adv. Math.}, 220(2):523--539, 2009.

\bibitem{MR2875865}
Bernd Ammann, Mattias Dahl, and Emmanuel Humbert.
\newblock Harmonic spinors and local deformations of the metric.
\newblock {\em Math. Res. Lett.}, 18(5):927--936, 2011.

\bibitem{MR3070562}
Bernd Ammann and Nicolas Ginoux.
\newblock Dirac-harmonic maps from index theory.
\newblock {\em Calc. Var. Partial Differential Equations}, 47(3-4):739--762,
  2013.

\bibitem{zbMATH07075240}
Bernd Ammann and Nicolas Ginoux.
\newblock Some examples of {Dirac}-harmonic maps.
\newblock {\em Lett. Math. Phys.}, 109(5):1205--1218, 2019.

\bibitem{MR2230574}
Bernd Ammann, Emmanuel Humbert, and Bertrand Morel.
\newblock Mass endomorphism and spinorial {Y}amabe type problems on conformally
  flat manifolds.
\newblock {\em Comm. Anal. Geom.}, 14(1):163--182, 2006.

\bibitem{MR397797}
M.~F. Atiyah, V.~K. Patodi, and I.~M. Singer.
\newblock Spectral asymmetry and {R}iemannian geometry. {I}.
\newblock {\em Math. Proc. Cambridge Philos. Soc.}, 77:43--69, 1975.

\bibitem{MR397798}
M.~F. Atiyah, V.~K. Patodi, and I.~M. Singer.
\newblock Spectral asymmetry and {R}iemannian geometry. {II}.
\newblock {\em Math. Proc. Cambridge Philos. Soc.}, 78(3):405--432, 1975.

\bibitem{MR397799}
M.~F. Atiyah, V.~K. Patodi, and I.~M. Singer.
\newblock Spectral asymmetry and {R}iemannian geometry. {III}.
\newblock {\em Math. Proc. Cambridge Philos. Soc.}, 79(1):71--99, 1976.

\bibitem{MR2044031}
Paul Baird and John~C. Wood.
\newblock {\em Harmonic morphisms between {R}iemannian manifolds}, volume~29 of
  {\em London Mathematical Society Monographs. New Series}.
\newblock The Clarendon Press, Oxford University Press, Oxford, 2003.

\bibitem{zbMATH01121705}
Christian B{\"a}r.
\newblock Harmonic spinors for twisted {Dirac} operators.
\newblock {\em Math. Ann.}, 309(2):225--246, 1997.

\bibitem{zbMATH01220814}
Christian B{\"a}r.
\newblock Extrinsic bounds for eigenvalues of the {Dirac} operator.
\newblock {\em Ann. Global Anal. Geom.}, 16(6):573--596, 1998.

\bibitem{zbMATH06748684}
Christian B{\"a}r and Werner Ballmann.
\newblock Guide to elliptic boundary value problems for {Dirac}-type operators.
\newblock In {\em Arbeitstagung Bonn 2013. In memory of Friedrich Hirzebruch.
  Proceedings of the meeting, Bonn, Germany, May, 22--28, 2013}, pages 43--80.
  Basel: Birkh{\"a}user/Springer, 2016.

\bibitem{MR2285203}
Katrin Becker, Melanie Becker, and John~H. Schwarz.
\newblock {\em String theory and {M}-theory}.
\newblock Cambridge University Press, Cambridge, 2007.
\newblock A modern introduction.

\bibitem{MR1233386}
Bernhelm Boo\ss~Bavnbek and Krzysztof~P. Wojciechowski.
\newblock {\em Elliptic boundary problems for {D}irac operators}.
\newblock Mathematics: Theory \& Applications. Birkh\"{a}user Boston, Inc.,
  Boston, MA, 1993.

\bibitem{MR3410545}
Jean-Pierre Bourguignon, Oussama Hijazi, Jean-Louis Milhorat, Andrei Moroianu,
  and Sergiu Moroianu.
\newblock {\em A spinorial approach to {R}iemannian and conformal geometry}.
\newblock EMS Monographs in Mathematics. European Mathematical Society (EMS),
  Z\"{u}rich, 2015.

\bibitem{MR3830277}
Volker Branding and Klaus Kr\"{o}ncke.
\newblock Global existence of {D}irac-wave maps with curvature term on
  expanding spacetimes.
\newblock {\em Calc. Var. Partial Differential Equations}, 57(5):Paper No. 119,
  30, 2018.

\bibitem{MR1030856}
Kung-Ching Chang.
\newblock Heat flow and boundary value problem for harmonic maps.
\newblock {\em Ann. Inst. H. Poincar\'{e} C Anal. Non Lin\'{e}aire},
  6(5):363--395, 1989.

\bibitem{MR2262709}
Qun Chen, J\"{u}rgen Jost, Jiayu Li, and Guofang Wang.
\newblock Dirac-harmonic maps.
\newblock {\em Math. Z.}, 254(2):409--432, 2006.

\bibitem{MR3908762}
Qun Chen, J\"{u}rgen Jost, Linlin Sun, and Miaomiao Zhu.
\newblock Estimates for solutions of {D}irac equations and an application to a
  geometric elliptic-parabolic problem.
\newblock {\em J. Eur. Math. Soc. (JEMS)}, 21(3):665--707, 2019.

\bibitem{MR3085099}
Qun Chen, J\"{u}rgen Jost, Guofang Wang, and Miaomiao Zhu.
\newblock The boundary value problem for {D}irac-harmonic maps.
\newblock {\em J. Eur. Math. Soc. (JEMS)}, 15(3):997--1031, 2013.

\bibitem{MR598586}
Tohru Eguchi, Peter~B. Gilkey, and Andrew~J. Hanson.
\newblock Gravitation, gauge theories and differential geometry.
\newblock {\em Phys. Rep.}, 66(6):213--393, 1980.

\bibitem{MR1476425}
Thomas Friedrich.
\newblock {\em Dirac-{O}peratoren in der {R}iemannschen {G}eometrie}.
\newblock Advanced Lectures in Mathematics. Friedr. Vieweg \& Sohn,
  Braunschweig, 1997.
\newblock Mit einem Ausblick auf die Seiberg-Witten-Theorie. [With an outlook
  on Seiberg-Witten theory].

\bibitem{zbMATH01522633}
Peter~B. Gilkey.
\newblock The {Atiyah}-{Singer} index theorem.
\newblock In {\em Handbook of differential geometry. Vol. I}, pages 709--746.
  Amsterdam: North-Holland, 2000.

\bibitem{MR2509837}
Nicolas Ginoux.
\newblock {\em The {D}irac spectrum}, volume 1976 of {\em Lecture Notes in
  Mathematics}.
\newblock Springer-Verlag, Berlin, 2009.

\bibitem{zbMATH01911945}
Nicolas Ginoux and Bertrand Morel.
\newblock On eigenvalue estimates for the submanifold {Dirac} operator.
\newblock {\em Int. J. Math.}, 13(5):533--548, 2002.

\bibitem{MR1707327}
Robert~E. Gompf and Andr\'{a}s~I. Stipsicz.
\newblock {\em {$4$}-manifolds and {K}irby calculus}, volume~20 of {\em
  Graduate Studies in Mathematics}.
\newblock American Mathematical Society, Providence, RI, 1999.

\bibitem{MR482822}
Richard~S. Hamilton.
\newblock {\em Harmonic maps of manifolds with boundary}.
\newblock Lecture Notes in Mathematics, Vol. 471. Springer-Verlag, Berlin-New
  York, 1975.

\bibitem{MR1946443}
Oussama Hijazi, Sebasti\'{a}n Montiel, and Antonio Rold\'{a}n.
\newblock Eigenvalue boundary problems for the {D}irac operator.
\newblock {\em Comm. Math. Phys.}, 231(3):375--390, 2002.

\bibitem{zbMATH05557123}
J{\"u}rgen Jost.
\newblock {\em Geometry and physics}.
\newblock Berlin: Springer, 2009.

\bibitem{MR3724759}
J\"{u}rgen Jost, Lei Liu, and Miaomiao Zhu.
\newblock A global weak solution of the {D}irac-harmonic map flow.
\newblock {\em Ann. Inst. H. Poincar\'{e} C Anal. Non Lin\'{e}aire},
  34(7):1851--1882, 2017.

\bibitem{MR4402492}
J\"{u}rgen Jost, Lei Liu, and Miaomiao Zhu.
\newblock A mixed elliptic-parabolic boundary value problem coupling a
  harmonic-like map with a nonlinear spinor.
\newblock {\em J. Reine Angew. Math.}, 785:81--116, 2022.

\bibitem{MR2569270}
J\"{u}rgen Jost, Xiaohuan Mo, and Miaomiao Zhu.
\newblock Some explicit constructions of {D}irac-harmonic maps.
\newblock {\em J. Geom. Phys.}, 59(11):1512--1527, 2009.

\bibitem{MR4549002}
J\"{u}rgen Jost and Jingyong Zhu.
\newblock Uniqueness of {D}irac-harmonic maps from a compact surface with
  boundary.
\newblock {\em J. Differential Equations}, 357:388--411, 2023.

\bibitem{MR5102184}
J\"{u}rgen Jost and Jingyong Zhu.
\newblock Boundary partial regularity for a class of {D}irac-harmonic maps.
\newblock {\em J. Funct. Anal.}, 291(9):Paper No. 111622, 22, 2026.

\bibitem{MR1031992}
H.~Blaine Lawson, Jr. and Marie-Louise Michelsohn.
\newblock {\em Spin geometry}, volume~38 of {\em Princeton Mathematical
  Series}.
\newblock Princeton University Press, Princeton, NJ, 1989.

\bibitem{MR664104}
Luc Lemaire.
\newblock Boundary value problems for harmonic and minimal maps of surfaces
  into manifolds.
\newblock {\em Ann. Scuola Norm. Sup. Pisa Cl. Sci. (4)}, 9(1):91--103, 1982.

\bibitem{adam}
Adam Lindström.
\newblock Uncoupled {D}irac-{Y}ang-{M}ills pairs on closed {R}iemannian spin
  manifolds.
\newblock {\em arxiv:2601.22886}, 2026.

\bibitem{amym}
Adam Lindström and Marko Sobak.
\newblock Spherically symmetric {D}irac-yang-mills pairs on riemannian
  manifolds.
\newblock {\em arxiv:2602.09122}, 2026.

\bibitem{MR4287930}
Lei Liu and Miaomiao Zhu.
\newblock Boundary value problems for {D}irac-harmonic maps and their heat
  flows.
\newblock {\em Vietnam J. Math.}, 49(2):577--596, 2021.

\bibitem{zbMATH03468033}
John~W. Milnor and James~D. Stasheff.
\newblock {\em Characteristic classes}, volume~76 of {\em Ann. Math. Stud.}
\newblock Princeton University Press, Princeton, NJ, 1974.

\bibitem{zbMATH05666397}
Simon Raulot.
\newblock The first eigenvalue of {Dirac} operators on manifolds with boundary,
  and some application.
\newblock In {\em Actes de S\'eminaire de Th\'eorie Spectrale et G\'eom\'etrie.
  Ann\'ee 2007--2008}, pages 91--121. St. Martin d'H{\`e}res: Universit{\'e} de
  Grenoble I, Institut Fourier, 2008.

\bibitem{zbMATH04127996}
McKenzie~Y. Wang.
\newblock Parallel spinors and parallel forms.
\newblock {\em Ann. Global Anal. Geom.}, 7(1):59--68, 1989.

\bibitem{MR3719555}
Johannes Wittmann.
\newblock Short time existence of the heat flow for {D}irac-harmonic maps on
  closed manifolds.
\newblock {\em Calc. Var. Partial Differential Equations}, 56(6):Paper No. 169,
  32, 2017.

\bibitem{zbMATH07094314}
Johannes Wittmann.
\newblock Minimal kernels of {Dirac} operators along maps.
\newblock {\em Math. Nachr.}, 292(7):1627--1635, 2019.

\end{thebibliography}
\end{document}